\documentclass[11pt]{amsart}
\usepackage{amssymb,comment,amsmath,wasysym}
\usepackage{bbm}
\usepackage{graphicx} 
\usepackage[english]{babel} 
\usepackage{float}
\usepackage{mathtools}
\usepackage{hyperref}
\usepackage[noadjust]{cite}
\usepackage[margin=3.2cm]{geometry}
\hypersetup{ 
    colorlinks=true,       % false: boxed links; true: colored links
    linkcolor=blue,          % color of internal links
    citecolor=blue,        % color of links to bibliography
    filecolor=blue,      % color of file links
    urlcolor=blue           % color of external links
}

\usepackage{array}

\usepackage{pstricks} % Compile using XeLatex, View PDF
\usepackage{pst-all}
\usepackage{epsfig}
\usepackage{pst-grad} % For gradients
\usepackage{pst-plot} % For axes
\usepackage[space]{grffile} % For spaces in paths
\usepackage{etoolbox} % For spaces in paths
\usepackage{color} %for color text
\makeatletter % For spaces in paths
\patchcmd\Gread@eps{\@inputcheck#1 }{\@inputcheck"#1"\relax}{}{}
\makeatother

\usepackage[all]{xy}
\usepackage{mathrsfs}
\usepackage{graphics}
\usepackage{amsthm}

\theoremstyle{plain}\newtheorem{theorem}{Theorem}[section]\newtheorem{Theorem}{Theorem}\newtheorem{Corollary}{Corollary}\newtheorem{proposition}[theorem]{Proposition}\newtheorem{lemma}[theorem]{Lemma}

\theoremstyle{definition}\newtheorem{notation}[theorem]{Notation}\newtheorem{remark}[theorem]{Remark}%\newtheorem{construction}[subsection]{Construction}

\def\Q{\mathbb{Q}}\def\C{\mathbb{C}}\def\N{\mathbb{N}}\def\Z{\mathbb{Z}}\def\R{\mathbb{R}}\def\ZZ2{\mathbb{\Z/ 2\Z}}
\def\sb{\subset}\def\lb{\langle}\def\rb{\rangle}\def\ot{\otimes}\def\t{\times}\def\sm{\setminus}
\def\c{\gamma}\def\a{\alpha}\def\b{\beta}\def\d{\delta}\def\e{\epsilon}\def\De{\Delta}\def\La{\Lambda}\def\o{\omega}\def\la{\lambda}\def\la{\lambda}\def\p{\partial}\def\S{\Sigma}
\def\ov{\overline}\def\wt{\widetilde}
\def\sl2{\mathfrak{sl}_2}
\def\su2{\mathfrak{su}(2)}
\def\tr{\text{tr}\,}
\def\id{\text{id}}\def\End{\text{End}}
\def\Hom{\text{Hom}\,}
\def\deg{\text{deg}}

\def\Uq{U_q(\mf{sl}_2)}\def\Uh{U_h(\sl2)}\def\Usl2{U_q(\sl2)}\def\usl2{\wt{U}_q(\sl2)}\def\Uq{U_q(\mf{sl}_2)}\def\Uh{U_h(\sl2)}\def\uq{\mathfrak{u}_q(\sl2)}

\def\Rep{\text{Rep}}

\def\tt{\theta}
\def\unit{\mathbbm{1}}

\def\CC{\mathcal{C}}

\def\ZZ{\mathcal{Z}}

\def\CC{\mathcal{C}}

\def\HH{H=\{\Ha\}_{\a\in G}}

\def\Z{\mathbb{Z}}\def\gg{\mathfrak{g}}
\def\C{\mathbb{C}}\def\N{\mathbb{N}}
\def\ot{\otimes}\def\id{\text{id}}

\def\Uqg{U_q(\gg)}
\def\sl{\mathfrak{sl}}

\def\TT{\overline{\Theta}}
\def\UHr{\UHq}\def\qa{q}
\def\CC{\mathcal{C}}

\def\rev{\overrightarrow{\text{ev}}}

\def\rcoev{\overrightarrow{\text{coev}}}
\def\lev{\overleftarrow{\text{ev}}}

\def\lcoev{\overleftarrow{\text{coev}}}

\def\sl{\mathfrak{sl}_2}

\def\uq{\mathfrak{u}_q(\sl)}
\def\HH{\mathcal{H}}
\def\ADO{ADO_p}
\def\top{\mathrm{top}}

\def\KK{\mathbb{K}}

\def\Uqg{U_q(\mathfrak{gl}(2|1))}

\def\sl{\mathfrak{sl}}

\def\uq{\overline{U}_q(\mathfrak{sl}_2)}

\def\gg{\mathfrak{g}}

\def\ZZ{\mathcal{Z}}

\def\tt{\theta}

\def\uq{\mathfrak{u}_{\zeta}(\sl_2)}

\def\gg{\mathfrak{g}}

\def\qi{\qi}

\def\hh{\mathfrak{h}}

\def\Ur{\ov{U}_q(\gg)}
\def\UHqg{U^H_{q}(\gg)}\def\Uh{U(\mathfrak{h})}\def\Uqg{U_q(\gg)}
\def\UHr{\ov{U}^H_{q}(\gg)}\def\Ur{\ov{U}_{q}(\gg)}

\def\Uq{U_q}
\def\Uqg{U_q(\gg)}

\def\hh{\mathfrak{h}}\def\CC{\mathcal{C}}
\def\uqg{\mathfrak{u}_q(\gg)}

\def\uu{\mathfrak{u}}
\def\uqb{\uu_q(\mathfrak{b})}

\def\Dep{\De^+}

\def\ADO{\text{ADO}}
\def\ADOgr{\text{ADO}_{\gg,r}}
\def\topgr{\text{top}_{\gg,r}}
\def\Va{\Vla}
\def\Vad{\Va^*}

\def\ZZ{\mathcal{Z}}

\def\tt{\theta}

\def\uq{\mathfrak{u}_{\zeta}(\sl_2)}

\def\gg{\mathfrak{g}}

\def\qi{\qi}

\def\hh{\mathfrak{h}}

\def\Ur{\ov{U}_q(\gg)}
\def\UHqg{U^H_{q}(\gg)}\def\Uh{U(\mathfrak{h})}\def\Uqg{U_q(\gg)}
\def\UHr{\ov{U}^H_{q}(\gg)}\def\Ur{\ov{U}_{q}(\gg)}

\def\Uqg{U_q(\gg)}

\def\hh{\mathfrak{h}}\def\CC{\mathcal{C}^H}
\def\uqg{\mathfrak{u}_q(\gg)}

\def\uu{\mathfrak{u}}
\def\uqb{\uu_q(\mathfrak{b})}

\def\Dep{\De^+}

\def\ADO{\text{ADO}}
\def\ADOgr{\text{ADO}_{\gg,r}}

\def\rev{\overrightarrow{\text{ev}}}

\def\rcoev{\overrightarrow{\text{coev}}}
\def\lev{\overleftarrow{\text{ev}}}

\def\lcoev{\overleftarrow{\text{coev}}}

\def\coev{\text{coev}}

\def\XX{\mathbb{X}}

\def\rank{\text{rank}}
\def\tt{\boldsymbol{t}}
\def\leastgr{\text{least}_{\gg,r}}
\def\Va{\Vla}

\def\Im{I_{max}}
\def\XX{X^{\overline{\e}}}
\def\JJ{\mathcal{J}_X}

\def\FF{\overline{F}}
\def\cl{\text{cl}}

\def\HH{\mathcal{H}}\def\KK{\mathcal{K}}
\def\TT{\ov{\Theta}}
\def\Vla{\Vla}
\def\qa{q}

\def\rev{\overrightarrow{\text{ev}}}

\def\rcoev{\overrightarrow{\text{coev}}}
\def\lev{\overleftarrow{\text{ev}}}

\def\lcoev{\overleftarrow{\text{coev}}}

\def\coev{\text{coev}}

\def\XX{\mathbb{X}}
\def\UHrqg{\ov{U}^H_q(\gg)}

\def\CCgr{\CC_{\gg,r}}

\def\Imax{I_{max}}
\def\CCgr{\CC_{\gg,r}}

\def\UHqg{U^H_q(\gg)}\def\qi{q}

\def\TS{T_{\Sigma}}

\def\Imax{I_{max}}

\def\uqLW{\mathfrak{u}_q^{L_W}}
\def\uqL{\mathfrak{u}_q^L}
\def\uqLg{\mathfrak{u}_q^L(\gg)}
\def\Pgr{P_{\gg,r}}

\def\lla{\boldsymbol{\la}}
\def\Vla{V_{\lla}}\def\Va{V_{\lla}}

\title[Top coefficients of ADO invariants are Murasugi-sum multiplicative]{Top coefficients of ADO invariants are Murasugi-sum multiplicative}

\author[D. López Neumann]{Daniel López Neumann}
\address{Universidad de Concepción, Concepción, Chile}
\email{dlopezn@udec.cl}

\author[R. van der Veen]{Roland van der Veen}
\address{University of Groningen, Netherlands}
\email{r.i.van.der.veen@rug.nl}

\begin{document}

\maketitle

\begin{abstract}

The Akutsu-Deguchi-Ohtsuki (ADO) invariants of links are a non-semisimple version of the colored Jones polynomials. We prove that the top coefficients of the ADO invariants of links in $S^3$ are multiplicative under Murasugi-sum along connected minimal genus Seifert surfaces. More generally, for any simple complex Lie algebra $\gg$ of type ADE and any $r\geq 3$, we prove that the top coefficient of the $r$-th $\gg$-ADO invariant is multiplicative under Murasugi-sum.
As a corollary, we get that all the $\gg$-ADO invariants of fibred links are $q$-monic, generalizing a previous result of the authors for $\sl_2$. Moreover, whenever $\uqg$ is ribbon, we find an expression for the top coefficient of the $r$-th $\gg$-ADO invariant in terms of Hopf algebra integrals of universal $\uqg$-invariants of bottom tangles.

\end{abstract}

\section{Introduction}

Murasugi-sum is a way to glue two embedded surfaces in 3-space along an embedded polygon to get a third surface (see Figure \ref{fig:Murasugi sum}). This operation was introduced by Murasugi in \cite{Murasugi:genus-alternating1, Murasugi:genus-alternating2} to prove that the Alexander polynomial of an alternating link detects the Seifert genus\footnote{See \cite{Kindred:simple-proof-Murasugi-thm} for a nice exposition of the proof.}. A deep theorem of Gabai \cite{Gabai:Murasugi1, Gabai:Murasugi2} states that a Murasugi-sum of two surfaces has minimal genus for the link that appears as its boundary if and only if both surfaces have minimal genus (for their respective boundaries). Thus, given a link $L\sb S^3$, we say that {\em $L$ is a Murasugi-sum of links $L_1,L_2\sb S^3$} if $L$ bounds a Murasugi-sum of minimal genus Seifert surfaces of $L_1$ and $L_2$\footnote{Note that if we didn't add the condition of minimal genus to define Murasugi-sum of links, then the resulting operation would not be quite interesting by \cite{Able-Hirasawa:Murasugi} or \cite{Thompson:a-note-Murasugi}.}. This operation depends on various choices (the Seifert surfaces chosen and the polygons), hence several distinct links might be obtained by Murasugi-summing two given links in general. 
\medskip

In light of Gabai's theorem, whenever a property or invariant of links is related to the Seifert genus, it is natural to ask whether it behaves well under Murasugi-sum. This is the case for fibredness \cite{Gabai:fibred}, strong quasi-positivity \cite{Rudolph:quasipositive-plumbing}, the Alexander polynomial and knot Floer homology \cite{CHS:Murasugi-sum-extremal-Floer}, which all were first known to relate to the Seifert genus and later shown to behave well under Murasugi-sum.

\medskip

Recently, it has been found that the so called ``non-semisimple'' quantum link polynomials have genus bounds \cite{LNV:genus, LNV:genus-unrolled, KT:Links-Gould, GLN:genus-for-Lie-superalgebras}. This term refers to Reshetikhin-Turaev link invariants \cite{RT1} in which the underlying braided tensor category is non-semisimple, the prototype being categories of modules over (unrolled) quantum groups at roots of unity as studied by Geer-Patureau and collaborators \cite{GP:trace-projective-quantum-groups, GP:nonrestricted, CGP:non-semisimple, BCGP}. We prove that this latter class of link invariants behaves well under Murasugi-sum, namely, their top coefficients are Murasugi-sum multiplicative. In particular, this is the case for the Akutsu-Deguchi-Ohtsuki (ADO) invariants \cite{ADO, Murakami:ADO}, a non-semisimple version of the colored Jones polynomials that lie at the core of various recent developments in quantum topology \cite{GPT:modified, CGP:non-semisimple, BCGP, GM:two-variable, Gukov:Coulomb}. This is another result (along with \cite{LNV:genus, LNV:fibred-sl2}) showing that non-semisimple quantum link invariants are more neatly related to geometric topology than their semisimple counterparts, such as colored Jones or HOMFLY. 

\subsection{Main result for ADO} For any $p\geq 2$, the {\em $p$-th ADO invariant}\footnote{Usually \cite{ADO, CGP:non-semisimple, BCGP} the ADO invariant has one variable for every component of the link. The invariant we consider here is obtained by equating all these variables, or equivalently, we use the same color on every component. Because of this, we do not need to multiply by the modified dimensions of \cite{GPT:modified}, so our link invariant is a polynomial instead of a rational function.} is a polynomial invariant of oriented links $L\sb S^3$ with coefficients in the cyclotomic ring $\Z[q^2]$ where $q=e^{\frac{\pi i}{p}}$:
$$\ADO_p(L,t)\in \Z[q^2][t^{\pm \frac{1}{2}}].$$ Let $\top^{ADO}_p(L)$ be the coefficient of $t^{(2g(L)+s-1)(p-1)/2}$ in $ADO_p(L)$, where $s$ is the number of components of $L$ and $g(L)$ the minimal genus of a (connected) Seifert surface of $L$. This is the top coefficient of $ADO_p(L)$ if the genus bound from \cite{LNV:genus} is sharp (and zero if not).

\begin{Theorem}
\label{Theorem: main theorem for sl2 ADO}

If $L$ is a Murasugi sum of $L_1$ and $L_2$ along minimal genus connected Seifert surfaces, then $$\top^{ADO}_p(L)=\top^{ADO}_p(L_1)\cdot\top^{ADO}_p(L_2)$$
for any $p\geq 2$.
\end{Theorem}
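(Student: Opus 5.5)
We plan to realize $\ADO_p(L)$ as a state sum computed from a Seifert surface, as in the genus bound of \cite{LNV:genus}, and then read off the top coefficient directly from that presentation. Present a minimal genus connected Seifert surface $\Sigma$ of $L$ as a disk with $2g+s-1$ bands attached, and write $L$ as the closure of a tangle built from these bands. The ADO invariant is the Reshetikhin--Turaev evaluation at the $p$-dimensional module $V_t$ with highest weight parameter $t$. We will use the key structural fact behind the genus bound: doubling each band gives a pair of parallel strands with opposite orientation, so each band only carries the $t$-degree-zero part of $V_t\otimes V_t^*$ up to a controlled shift. Each band then contributes at most $t^{(p-1)/2}$, and the top-degree part comes from the projection of $V_t\otimes V_t^*$ onto its extremal summand. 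Concretely, as $t\to\infty$ the highest $t$-power in each band is produced only by the weight pairs $(v_0\otimes v_{p-1}^*)$-type extremal states. The top coefficient is therefore a state sum of a ``reduced'' model. In that model each band is labeled by a nilpotent-algebra element, namely a small quantum group $\uq$ or Borel part, and the disk becomes a Hopf-algebra integral contraction.

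Next we express the top coefficient as a functional on the surface. Through this limit, $\top^{ADO}_p(L)$ should equal $\mu(Z(T_\Sigma))$. Here $T_\Sigma$ is the bottom tangle whose components are the cores of the bands of $\Sigma$ (a handlebody-like tangle), $Z$ is the universal invariant with values in $\uqb^{\otimes(2g+s-1)}$ (or its double), and $\mu$ is the tensor product of the integral $\lambda$ together with a fixed cointegral-type vector. We first prove this formula for an arbitrary band presentation. We then prove it is independent of the presentation, since the left side clearly is.

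Finally we prove multiplicativity. In a Murasugi sum $\Sigma=\Sigma_1\ast\Sigma_2$ along a $2n$-gon $D$, one can choose band presentations of $\Sigma_1$ and $\Sigma_2$ in which the bands of $\Sigma_1$ meet $D$ on alternate edges from above and those of $\Sigma_2$ from below. The band set of $\Sigma$ is then the disjoint union, and the tangle $T_\Sigma$ is obtained by stacking $T_{\Sigma_1}$ and $T_{\Sigma_2}$ with linking only through $D$. The universal invariant then factors as $Z(T_{\Sigma_1})\cdot Z(T_{\Sigma_2})$, up to the action of the polygon disk on the tensor factors. The crucial point is that the integral absorbs the crossings coming from the polygon. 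Using $\lambda(x_{(1)})x_{(2)}=\lambda(x)1$ and the invariance of the integral under the $R$-matrix action on the reduced model, the contributions of $D$ become trivial. This yields $\mu(Z(T_\Sigma))=\mu(Z(T_{\Sigma_1}))\,\mu(Z(T_{\Sigma_2}))$. The minimal genus hypothesis enters only through Gabai's theorem, which guarantees that $g(L)=g(L_1)+g(L_2)$. Combined with the Euler characteristic count $2g+s-1=(2g_1+s_1-1)+(2g_2+s_2-1)$, this shows that the degree we read off for $L$ is exactly the sum of the degrees for $L_1$ and $L_2$.

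The main obstacle is the second step: identifying the top $t$-coefficient as an integral of a universal invariant, since $\uq$ need not be ribbon. The first attempt would be to work directly in the unrolled quantum group with weight modules. There one takes the $t^{(p-1)/2}$-leading term of each band's $R$-matrix contribution, which is a projector onto highest/lowest weight vectors, and checks by hand that these projectors compose like the Borel-part integral. If this is too cumbersome, the fallback is a purely local argument. Here one shows that the leading-term state sum is invariant under the band slides and isotopies relating any two presentations, and that it factors across the polygon, avoiding any global Hopf-algebraic identification.
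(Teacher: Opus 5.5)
You have the right ingredients: band presentations, universal invariants of bottom tangles, integrals, and Gabai's theorem together with $N=N_1+N_2$ from the Euler characteristic. But the multiplicativity step does not go through as stated.

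\textbf{Gap in the multiplicativity step.} To get a single bottom tangle $T_\Sigma$ you must move the $\Sigma_2$-bands below $D$. They are attached above $D$, along edges interleaved with those of $\Sigma_1$, so moving them forces crossings between $\Sigma_1$- and $\Sigma_2$-components. Each such crossing puts $\sum_I c_I E^I\otimes F^I$ (times Cartan factors) on two \emph{different} tensor factors. The identity $\mu(x_{(1)})x_{(2)}=\mu(x)1$ concerns coproducts, i.e.\ doubling a single strand as in handle slides, and says nothing about this situation.

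In fact ``the integral absorbs crossings'' is false in general, even for degree-zero tensors. In $\mathfrak{u}^{L_W}_q(\mathfrak{sl}_2)$ take $x=F^{p-1}E^{p-2}$ and $y=F^{p-2}E^{p-1}$. Then $\overline{\mu}(x)\overline{\mu}(y)=0$, whereas the $E\otimes F$ term of $R$ gives $(\overline{\mu}\otimes\overline{\mu})((x\otimes y)R)\neq 0$.

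In the paper, the factorization of $\overline{\mu}^{\otimes}(Z_{T_\Sigma})$ is a \emph{consequence} of the theorem, not a tool for proving it. The paper never forms a bottom tangle for $\Sigma$. It opens $L$ at a point of $\partial D$ and writes $F(L_o,V_\lambda)=(\mathrm{id}\otimes\overline{F}(T_{\Sigma_2},X'))\circ I\circ(\overline{F}(T_{\Sigma_1},X)\otimes\mathrm{id})$. Here $I$ consists only of identities and left (co)evaluations, so no crossings between the two halves ever appear.

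\textbf{The missing mechanism.} What makes this work, and what your second step also lacks, is that the top part of each band is a cointegral and the cointegral cuts the band.
\begin{itemize}
\item $Z_T$ has total degree zero, and every factor of a term with maximal $E$-power has nonnegative degree. Hence those terms have the form $\bigotimes_i f_i(K)F^{I_{max}}E^{I_{max}}$, and all other terms lose $t^{-1}$-degree by the algebraic bound.
\item On $X=V_\lambda\otimes V_\lambda^*$, $F^{I_{max}}E^{I_{max}}$ equals $\Lambda$ up to lower $t^{-1}$-degree, and $\Lambda|_X=\kappa(\lambda)\,\overleftarrow{\mathrm{coev}}\circ\overrightarrow{\mathrm{ev}}$.
\item Therefore $\overline{F}(T_{\Sigma_1},X)=c\,\overleftarrow{\mathrm{coev}}^{\otimes 2N}+B$, and upside down for $\Sigma_2$. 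Once the bands are cut, how they were interleaved around $D$ is irrelevant.
\end{itemize}
This rank-one operator squares to zero, since the quantum dimension of $V_\lambda$ vanishes. So it is not a projection onto an extremal summand. Nor is the top degree carried by states $v_0\otimes v_{p-1}^*$ or by crossing-wise leading terms of $R$.

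\textbf{Controlling the cross terms.} Even with the cutting, the top of the composite is not automatically $cc'$. The remainder $B$ is only strictly lower in $t^{-1}$ and merely $\leq$ in $t$. So a cross term such as $c'\cdot\mathrm{cl}(B)$ could a priori reach the top degree. The paper rules this out by showing $\mathrm{cl}(B)$ is strictly lower on both sides. This uses the fact that the ADO invariant and $\kappa$ share the symmetry $\lambda\mapsto-\lambda+(r'-1)2\rho$. The complementary asymmetry of $B'$ then handles $(\mathrm{id}\otimes B')I(B\otimes\mathrm{id})$.
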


It is well-known that the top coefficient of the Alexander polynomial (which coincides with $\ADO_2(L,-t)$) is Murasugi-sum multiplicative (see e.g. \cite{Kindred:simple-proof-Murasugi-thm}). The analogous result for knot Floer homology was recently shown by Cheng-Hedden-Sarkar \cite{CHS:Murasugi-sum-extremal-Floer}. Our theorem (and Theorem \ref{theorem: Murasugi sum multiplicativity of ADO} below) adds an infinite family of Murasugi-sum multiplicative invariants. We are not aware of other link invariants having this multiplicativity property.

\medskip
\def\Ngr{N_{\gg,r}}

\subsection{Higher rank} More generally, we prove a similar theorem for the higher rank analogues of ADO invariants. Let $\gg$ be a simple complex Lie algebra of rank $n$ of type ADE. For any $r\geq 3$ the {\em $r$-th $\gg$-ADO invariant} of an oriented link $L\sb S^3$ is a $n$-variable Laurent polynomial $$\ADOgr(L)\in \Q(q)[t_1^{\pm \frac{1}{2}},\dots, t_n^{\pm \frac{1}{2}}]$$ 
\noindent where $q=e^{\frac{2\pi i}{r}}$. %We will also denote $$\Ngr(L,t):=\ADOgr(L)|_{t_1=\dots=t_n=t}\in \Z[q^2][t^{\pm\frac{1}{2}}].$$ 
This is defined as a Reshetikhin-Turaev invariant using the (ribbon) category of finite-dimensional weight modules over the unrolled restricted quantum group $\UHrqg$ at $q=e^{\frac{2\pi i}{r}}$ \cite{GPT:modified, CGP:non-semisimple, Rupert:unrolled}. This category has a ``continuous'' family of simple modules $\Vla, \lla\in \hh^*$ depending on $n$ complex parameters, which correspond to the variables in $\ADOgr(L)$ (namely $t_i=q^{2\lla(H_i)}$). Let $2\rho=\sum_{\b\in \De^+}\b$ and denote $\tt^{\a}=t_1^{a_1}\dots t_n^{a_n}$ if $\a=\sum_{i=1}^na_i\a_i\in L_R$. 
Let $g(L)$ and $s$ be as before and set $r'=r$ if $r$ is odd, $r'=r/2$ if $r$ is even. We define $$\topgr(L):=\text{ coefficient of }\tt^{(2g(L)+s-1)\cdot (r'-1)\rho} \text{ in } \ADOgr(L).$$  
Equivalently, $\topgr(L)$ is the coefficient of $t^{(2g(L)+s-1)\cdot (r'-1)(\rho,\rho)}$ in the polynomial $\Pgr(L,t)$ obtained by equating all the variables in $\ADOgr$ (Proposition \ref{prop: top of ADOgr is top of Pgr}). These are the top coefficients of $\ADOgr(L)$ or $\Pgr(L)$ if the genus bounds of \cite{LNV:genus, LNV:genus-unrolled} are sharp, and zero if not.  Note that $\ADO_p=\ADO_{\sl_2,2p}$. %These invariants were extended to a TQFT in \cite{dRGP:small-and-unrolled} (and \cite{BCGP} for $\gg=\sl_2$).  

\begin{Theorem}
\label{theorem: Murasugi sum multiplicativity of ADO}
Let $\gg$ be a simple complex Lie algebra of type ADE and $r\geq 3$. If $L$ is a Murasugi sum of $L_1$ and $L_2$ along connected minimal genus Seifert surfaces, then $$\topgr(L)=\topgr(L_1)\cdot \topgr(L_2).$$
\end{Theorem}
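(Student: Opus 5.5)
We plan to follow the strategy used for the genus bound in \cite{LNV:genus, LNV:genus-unrolled}, which computes $\ADOgr(L)$ from a Seifert surface, and to identify which term of that computation survives in top degree. Let $\S$ be a connected minimal genus Seifert surface of $L$, presented as a disk with $2g+s-1$ bands attached. Cutting $L$ along $\S$ expresses $\ADOgr(L)$ as a partial trace over $\Vla$ of an operator assembled from a bottom tangle $T_\S$, whose components are the cores of the bands, evaluated with the universal $R$-matrix. In the unrolled setting the Cartan part of $R$ factors as $q^{\sum H_i\ot H_j}$. Each band therefore contributes a factor whose $\tt$-degree is at most $(r'-1)\rho$, and the total $\tt$-degree is bounded by $(2g+s-1)(r'-1)\rho$. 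The first step is to make the top-degree part explicit. In degree $(2g+s-1)(r'-1)\rho$ only the terms in which every band carries the extremal (top times bottom weight) component survive. Equivalently, $\topgr(L)$ equals an evaluation $\Phi(T_\S)$, where $\Phi$ pairs the universal $\uqg$-invariant of the bottom tangle with a two-sided integral/cointegral of the small quantum group $\uqg$, or, for $r$ even, its appropriate ``top'' projection. This is the formula promised in the abstract. Since it is only a reformulation of the genus bound argument, I assume it is available, possibly with a sign or power of $q$ depending on framings.

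The second step is to describe the Murasugi sum at the level of bottom tangles. Suppose $\S=\S_1\ast\S_2$ is glued along a $2k$-gon $P$. Then we can choose band decompositions of $\S_1$ and $\S_2$ whose disks contain $P$, with bands of $\S_1$ attached along alternate edges of $P$ on one side and bands of $\S_2$ along the complementary edges on the other. The bands of $\S$ are the union of the two families. Moreover, because $\S_1$ lies above and $\S_2$ below the sphere containing $P$, the tangle $T_\S$ is the juxtaposition of $T_{\S_1}$ and $T_{\S_2}$ with no crossings between them. The only difference from a disjoint union is the interleaving order of the attaching points around the disk boundary. In algebraic terms, the band endpoints of $\S_2$ are shuffled in between those of $\S_1$.

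The main obstacle is the third step: showing that this interleaving is invisible to the integral pairing in top degree. The plan is to use the properties of the integral $\la$ of $\uqg$. First, $\la$ is invariant under the Nakayama automorphism twisted by the pivotal element. Second, the top-weight projector, which is a cointegral $\Lambda$, is absorbing: $x\Lambda=\epsilon(x)\Lambda$. Reordering endpoints around the disk amounts to conjugating by elements of $\uqg$ or moving them around the trace. Absorption against the cointegral should kill the extra cross terms, and the computation should factor as $\Phi(T_{\S_1})\Phi(T_{\S_2})$.

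I would begin by checking the $\sl_2$ case with $P$ a square (Hopf plumbing), where everything is explicit. Two points need particular care. For $r$ even the ribbon structure on $\uqg$ can fail, so the argument should be carried out in the unrolled category with degree truncation rather than in $\uqg$ itself. One must also track the framing and $q$-power normalizations, so that the exponents $(2g_i+s_i-1)(r'-1)\rho$ add up correctly. This addition holds because $2g+s-1=(2g_1+s_1-1)+(2g_2+s_2-1)$, the first Betti numbers being additive under Murasugi sum.
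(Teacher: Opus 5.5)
Your Step 3 is where the entire proof lives, and it is not supplied. The mechanism you suggest for it also does not apply. In space, $T_{\Sigma_1}$ and $T_{\Sigma_2}$ are separated: one family can be placed entirely in front of the other. But the universal invariant is read off from a diagram with the feet along a line, and interleaved feet then force crossings between the two families. Already for a Hopf plumbing, the two bands hanging below the square must cross. So $Z_{T_\Sigma}$ is neither a permuted $Z_{T_{\Sigma_1}}\otimes Z_{T_{\Sigma_2}}$ nor a conjugate of it. It contains $R$-matrix factors $\sum_I c_I E^I\otimes F^I$ (with Cartan parts) coupling a slot of $\Sigma_1$ to a slot of $\Sigma_2$. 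Consequently $\overline{\mu}^{\otimes N}(Z_{T_\Sigma})=\overline{\mu}^{\otimes N_1}(Z_{T_{\Sigma_1}})\,\overline{\mu}^{\otimes N_2}(Z_{T_{\Sigma_2}})$ is a genuinely nontrivial identity. The absorption rule $x\Lambda=\epsilon(x)\Lambda$ is about multiplying by the cointegral. What you would actually need to control is the integral evaluated on elements that entangle two different tensor slots, and nothing in your plan shows that these mixed terms collapse. Step 1 is also not just a reformulation of the genus bound. The bound is only an inequality, and pinning down the top term is the main technical work. Moreover, the integral formula you quote is only available when the small quantum group is ribbon (e.g.\ $(r,\det A)=1$), while the theorem is claimed for every $r\geq 3$.

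The idea you are missing is to keep the operator rather than pass to a scalar. Color the bands by $X=V_\lambda\otimes V_\lambda^*$ and show that, in top $t^{-1}$-degree, $\overline{F}(T_{\Sigma_1},X)$ is a multiple of $\overleftarrow{\mathrm{coev}}^{\otimes 2N}$. Three facts give this. First, in the PBW expansion of $Z_T$ only slots of the form $f_i(K)F^{I_{max}}E^{I_{max}}$ reach top degree, since total degree zero forces the $F$-part. Second, on $X$ the element $F^{I_{max}}E^{I_{max}}$ equals the cointegral up to lower degree. Third, the cointegral acts on $V_\lambda\otimes V_\lambda^*$ as $\kappa(\lambda)\,\overleftarrow{\mathrm{coev}}\circ\overrightarrow{\mathrm{ev}}$, so it cuts every band. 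Now treat $\Sigma_2$ upside down as a top tangle colored by $X'=V_\lambda^*\otimes V_\lambda$, so the two families never need to be on the same side of $P$. Then $F(L_o)=(\mathrm{id}\otimes\overline{F}(T_{\Sigma_2},X'))\circ I\circ(\overline{F}(T_{\Sigma_1},X)\otimes\mathrm{id})$, where $I$ is a planar arrangement of left (co)evaluations encoding the interleaving. Once all bands are cut, this planar network closes to $cc'\,\mathrm{id}$ whatever the interleaving.

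You must then control the cross terms, which your plan does not address. The remainder $B$ is strictly lower in $t^{-1}$ but only $\leq$ in $t$, so $c'\,\mathrm{cl}(B)$ could a priori reach top degree. The paper excludes this using the common symmetry $\lambda\mapsto-\lambda+(r'-1)2\rho$ of the ADO invariant and of $\kappa$, which shows that the closures of $B$ and $B'$ are strictly lower in both directions. Finally, for $(r,\det A)\neq 1$ the non-ribbon issue is not handled by a degree truncation. It is handled by writing the braiding on $\mathcal{C}_{[0]}$ as left multiplication by an element of $\mathfrak{u}_q^L(\mathfrak{g})^{\otimes 2}$.
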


As a consequence of the symmetry property for the $\ADOgr$ (Proposition \ref{prop: symmetry of ADO}) the least coefficient (i.e. the coefficient of $\tt^{-(r'-1)(2g(L)+s-1)\cdot \rho}$ in $\ADOgr(L)$) is also Murasugi-sum multiplicative. 
\medskip

The theorem is likely to hold in non-ADE types with appropriate modifications, but we do not do this in this paper.

%Theorem \ref{theorem: Murasugi sum multiplicativity of ADO} also holds for non-simply-laced $\gg$ with some minor (and obvious) restrictions on $r$, namely $r'>\max\{d_i\}_{i=1}^n$ and provided the top coefficient is taken with respect to the variables $t_i=q_i^{\la(H_i)}$ with $q_i=q^{d_i}$. The paper is written in the ADE case, we defer the non-simply-laced case to Subsection \ref{subs: non-simply laced}.

\subsection{Fibred links} A theorem of Giroux-Goodman states that every fibred link can be obtained by plumbing (and possibly deplumbing) Hopf links \cite{GG:fibred}, where plumbing means Murasugi-sum along a square. If $n_{\pm}$ (resp. $m_{\pm}$) is the number of Hopf links $H_{\pm}$ plumbed (resp. deplumbed) then Theorem \ref{theorem: Murasugi sum multiplicativity of ADO} together with the computation of $\topgr(H_{\pm})$ (Subsection \ref{subs: Hopf links computation}) immediately imply:

\begin{Corollary}
\label{corollary: fibred}
    For every simple $\gg$ of type ADE and integer $r\geq 3$, the top coefficient of $\ADOgr(L)$ of a fibred link $L$ is a power of $q$, namely $$\topgr(L)=q^{-(2\rho,2\rho)(n_--m_-)}$$
    where $n_-, m_-$ are as above and $2\rho=\sum_{\b\in\De^+}\b$. 
\end{Corollary}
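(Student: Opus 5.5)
The plan is to combine the Giroux--Goodman description of fibred links with the multiplicativity of Theorem \ref{theorem: Murasugi sum multiplicativity of ADO}, used in both directions (plumbing and deplumbing). By \cite{GG:fibred}, the fibre surface $F$ of a fibred link $L$ is related to a disk (the fibre of the unknot $U$) by a finite sequence of Hopf plumbings and deplumbings. Each intermediate surface is again a fibre surface, since Murasugi-sum preserves fibredness in both directions by Gabai \cite{Gabai:fibred}. Fibre surfaces are connected, and they are the unique minimal genus Seifert surfaces of their boundaries. Hence every step is a Murasugi-sum along connected minimal genus Seifert surfaces, and the hypothesis of Theorem \ref{theorem: Murasugi sum multiplicativity of ADO} holds at each step.

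The induction runs along this sequence and tracks $\topgr$. Let $L'$ be the boundary of an intermediate fibre surface $F'$.
\begin{itemize}
\item If $F''=F'*A_\pm$ is a plumbing with a Hopf band, the theorem gives $\topgr(\partial F'')=\topgr(L')\,\topgr(H_\pm)$.
\item If $F'=F''*A_\pm$ is a deplumbing, the theorem gives $\topgr(L')=\topgr(\partial F'')\,\topgr(H_\pm)$. Dividing requires $\topgr(H_\pm)$ to be invertible, which holds since we will see it is a power of $q$.
\end{itemize}
The base case is $\topgr(U)=1$: the unknot has $g=0$ and $s=1$, so the exponent is $0$. We need the normalization that the unknot, coloured by $\Vla$ without modified dimension, evaluates to $1$; this is standard for the invariant of a closure of a $(1,1)$-tangle.

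The input is the Hopf link computation of Subsection \ref{subs: Hopf links computation}. We expect $\topgr(H_+)=1$ and $\topgr(H_-)=q^{-(2\rho,2\rho)}$, up to the orientation conventions fixed there. Heuristically, the positive Hopf link's invariant is obtained from the double braiding on $\Vla\ot\Vla$ via a partial trace. Its top term in $\tt$ comes from the highest-weight vectors, and the extreme weight component of the quantum Casimir contributes the factor $q^{\pm(2\rho,2\rho)}$ relative to the normalization $\tt^{(r'-1)\rho}$. Which sign carries the trivial factor is exactly what the subsection computes; we take it as given.

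Combining these gives $\topgr(L)=\topgr(H_+)^{n_+-m_+}\,\topgr(H_-)^{n_--m_-}=q^{-(2\rho,2\rho)(n_--m_-)}$. The main subtlety is justifying the deplumbing steps. We must check that deplumbing arises as a Murasugi-sum of minimal genus connected surfaces (the Gabai argument above), and that $\topgr(H_\pm)\neq 0$ so that division is legitimate. A secondary check is that the answer does not depend on the chosen sequence. This follows automatically, since $\topgr(L)$ is an invariant of $L$ and the formula is derived from it.
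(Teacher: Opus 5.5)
Your proposal is correct and follows the paper's argument: Giroux--Goodman's plumbing/deplumbing description of fibre surfaces, Theorem~\ref{theorem: Murasugi sum multiplicativity of ADO} applied at each step, and the values $\topgr(H_+)=1$, $\topgr(H_-)=q^{-(2\rho,2\rho)}$ from Subsection~\ref{subs: Hopf links computation}. The details you add are exactly what the paper's ``immediately imply'' leaves implicit: intermediate fibre surfaces are connected and of minimal genus so the theorem applies, the unknot base case gives $\topgr(U)=1$, and $\topgr(H_\pm)$ is invertible, which handles the deplumbing steps.
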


The number $n_--m_-$ coincides with the Hopf invariant of the plane field of $S^3$ associated to the fiber of the given fibred link, see \cite{NR:difference-index-enhanced-Milnor} or \cite{LNV:fibred-sl2}. The above corollary was proved previously by the authors for $\gg=\sl_2$ at even roots of unity in \cite{LNV:fibred-sl2} (note that top coefficients in \cite{LNV:fibred-sl2} are taken with respect to the variable $x=q^2t$).

\subsection{Integrals of universal invariants} We also prove a formula expressing the top coefficient of $\ADOgr$ in terms of the Hopf algebra integral of the small quantum group $\uqg$ and universal invariants of bottom tangles as in \cite{Habiro:bottom-tangles}. We now suppose $r$ is coprime to the determinant of the Cartan matrix $A=(a_{ij})$ of $\gg$, it is well-known that this guarantees that the small quantum group $\uqg$ is a (finite-dimensional) ribbon Hopf algebra (see e.g. \cite{Turaev:BOOK1}). Thus, there is an associated universal $\uqg$-invariant of bottom tangles as in \cite{Habiro:bottom-tangles}. Given a right integral $\mu\in\uqg^*$, define $\ov{\mu}(-):=\mu(\La_G-)$ where $\La_G$ is the cointegral of the group $G$ of group-likes of $\uqg$ such that $\e(\La_G)=1$. 

\medskip

\begin{Theorem}
\label{theorem: top coefficient formula with integral}
Suppose $(r,\det(A))=1$. Let $\S$ be a connected minimal genus Seifert surface of a link $L$ and let $\TS$ be a (framed) bottom tangle such that $\S$ is obtained by gluing a single disk to the top of $\TS$. Let $Z_{\TS}\in \uqg^{\ot N}$ be the universal $\uqg$-invariant of $\TS$ ($N=2g(L)+s-1$ as before). Then
    $$\topgr(L)=\frac{ \overline{\mu}^{\ot N}(Z_{\TS})}{q^{(2\rho,2\rho)N}\cdot\ov{\mu}(v)^N} 
    %\overline{\mu}^{\ot N}(Z_{\TS})\cdot \frac{q^{-(2\rho,2\rho)N}}{\ov{\mu}(v)^N}
    $$
    where $v$ is the ribbon element of $\uqg$\footnote{In the present paper, the convention is that a positive twist acts by the ribbon element $v\in \uqg$.} (for an explicit expression of $\ov{\mu}(v)$ see Remark \ref{remark: scalar ovmu on ribbon v}).% and $\kappa_{\Lambda}(\tt)\in \Q(q)[q^{\pm\la_1},\dots,q^{\pm \la_n}]$ is defined in Lemma \ref{lemma: CGPT's lemma for cointegral}.
\end{Theorem}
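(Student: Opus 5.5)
We would compute $\ADOgr(L)$ from the bottom tangle $\TS$ and then isolate the top $\tt$-degree. Close $\TS$ by gluing a single disk to its top. This presents $L$ as the boundary of $\S$, hence as a "doubled" closure of $\TS$: each of the $N$ components of $\TS$ is a band of $\S$, and $L$ is obtained by taking the boundary of the thickened tangle. Concretely, $\ADOgr(L)$ is the Reshetikhin--Turaev evaluation, colored by $\Vla$, of the tangle obtained by replacing each component of $\TS$ by two parallel, oppositely oriented strands and capping them off with the disk.

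The universal invariant approach then expresses this evaluation as
\[
\bigl(\tr_{\Vla\ot\Vla^*}\bigr)^{\ot N}\bigl((\Delta\text{-type doubling})(Z_{\TS})\bigr),
\]
up to framing corrections by powers of $v$. Here each tensor factor of $Z_{\TS}$ acts on $\Vla\ot\Vla^*$ through the coproduct. The closure over the disk is a product of partial quantum traces, which is what relates it to Hopf-algebra functionals.

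The key steps, in order, are these.

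\textbf{(1)} Write the unrolled quantum group as $\UHrqg$, which contains the small quantum group $\uqg$ together with Cartan elements $H_i$. Factor $\Vla\ot\Vla^*$ as a module in which the weight-$\lla$ part is cancelled. The action of $\uqg$ on it is then independent of $\lla$ except through the group-likes $K_i$ acting by $q^{\lla(H_i)}$-dependent scalars, i.e. through powers of $\tt$.

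\textbf{(2)} Show that the functional $x\mapsto\tr_{\Vla\ot\Vla^*}(x\,\cdot)$, restricted to $\uqg$ and expanded in $\tt$, has as its coefficient of the extremal monomial $\tt^{(r'-1)\rho}$ a multiple of the integral. Since $\Vla\ot\Vla^*\cong \End(\Vla)$ is, for generic $\lla$, a projective module isomorphic to a regular-like module, the top $\tt$-power picks out the component $F^{top}E^{top}$ (product of top PBW powers). Pairing against $\La_G$ produces exactly $\ov\mu$. This is essentially the CGPT-type lemma for the cointegral: the top-degree coefficient of the trace map is $\ov{\mu}$ up to an explicit scalar. I expect this identification, and in particular pinning down the scalar (a power of $q$ times a normalization), to be the main obstacle. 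I would first verify it for $\sl_2$ by direct PBW computation, then use the factorization of $\uqb$ over positive roots for general ADE.

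\textbf{(3)} Justify that in the doubled closure only the top $\tt$-coefficient of each of the $N$ tensor factors contributes to the coefficient of $\tt^{N(r'-1)\rho}$. The genus bound of \cite{LNV:genus, LNV:genus-unrolled} shows that each factor has $\tt$-degree at most $(r'-1)\rho$, so the total top coefficient is the product of the per-factor top coefficients. This gives $\ov{\mu}^{\ot N}(Z_{\TS})$ up to a scalar.

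\textbf{(4)} Fix the scalar. The framing of the bands relative to the Seifert framing produces factors of $v$, and the constant is determined by a normalization case. I would calibrate using the unknot, or alternatively the Hopf bands $H_\pm$ with $N=1$, where $Z_{\TS}$ is essentially $v^{\pm1}$ times $R$-matrix data. Matching against the computation of $\topgr(H_\pm)$ yields the denominator $q^{(2\rho,2\rho)N}\ov{\mu}(v)^N$. Finally, check that the result is independent of the choice of $\TS$, since it is an invariant on both sides.
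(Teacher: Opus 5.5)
Your ingredients are the right ones: top PBW terms, the CGPT cointegral lemma, and calibration on a Hopf band. But steps (2)--(3) have a genuine gap. The closure of $T_\Sigma$ does not factor over the $N$ bands, so there is no "per-factor top coefficient" to multiply.

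(Separately, capping $T_\Sigma$ with the disk gives the closed link $L$. Its $V_\lambda$-colored Reshetikhin--Turaev evaluation is $0$, since $\dim_q V_\lambda=0$, so one must cut $L$ open to a $(1,1)$-tangle.)

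The actual formula, with $X=V_\lambda\otimes V_\lambda^*$, is
$$\ADOgr(L)\,\id_{V_\lambda}=(\id_{V_\lambda}\otimes\lev_{V_\lambda}^{\otimes 2N})\circ(\overline{F}(T_\Sigma,X)\otimes\id_{V_\lambda}).$$
Identifying $X\cong\mathrm{End}(V_\lambda)$, the $2N$ band-ends are \emph{composed} in the order in which they sit on the disk. As soon as bands interleave (already for a genus one knot), ends of different bands alternate in this composite. The genus bound only bounds the whole invariant and says nothing per factor.

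Worse, a per-band trace over $X$ as in your step (2) discards exactly the relevant information. For $x$ of $L_R$-degree zero one has
$$x|_X=\overline{\mu}(x)\,\kappa(\lambda)\,\lcoev_{V_\lambda}\circ\rev_{V_\lambda}+(\text{terms of lower degree in }t^{-1}).$$
Any trace over $X$, plain or pivotal, of the rank-one operator $\lcoev\circ\rev$ equals $\rev\circ\lcoev=\dim_q V_\lambda=0$. So the extremal coefficient of such a trace functional vanishes, rather than being a nonzero multiple of the integral.

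The missing idea is to use the CGPT lemma as a statement about the \emph{action} of the cointegral, not about a trace: $\Lambda|_X=\kappa(\lambda)\,\lcoev_{V_\lambda}\circ\rev_{V_\lambda}$, i.e.\ $\Lambda$ cuts a band. The argument runs as follows.
\begin{itemize}
\item A PBW monomial $K^AF^IE^J$ acts on the standard basis of $X$ with coefficients that are polynomials in $t^{-1}$ of degree $\le |J|$. The $t$-dependence sits in the matrix entries of the $E_i$. The $K_i$ act on $X$ with $\lambda$-independent eigenvalues, because the weights of $X$ lie in $L_R$, so your step (1) has this backwards.
\item Hence only terms with $E^{I_{max}}$ in every tensor factor matter. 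Since $Z_T$ has $L_R$-degree zero, these terms are $\bigotimes_i f_i(K)F^{I_{max}}E^{I_{max}}$.
\item On $X$ one has $F^{I_{max}}E^{I_{max}}=\Lambda+(\text{lower})$ and $f_i\Lambda=\epsilon(f_i)\Lambda$. So each band becomes $\epsilon(f_i)\kappa(\lambda)$ times a cut band.
\item After the $\iota_X$/pivot bookkeeping, the cut diagram is $\lcoev^{\otimes 2N}$, which closes to $\id_{V_\lambda}$ however the bands interleave.
\end{itemize}
This yields $\ADOgr(L)=\kappa(\lambda)^N\,\overline{\mu}^{\otimes N}(Z_T)+(\text{error})$, with $\prod_i\epsilon(f_i)=\overline{\mu}^{\otimes N}(Z_T)$.

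Two further points are missing from your plan.
\begin{itemize}
\item The argument controls the extreme coefficient in $t^{-1}$, and the error term is strictly lower only in that direction. You need the symmetry $\lambda\mapsto-\lambda+(r'-1)2\rho$, satisfied by both $\ADOgr$ and $\kappa$, to pass to $\topgr$.
\item The scalar you flag as the main obstacle never has to be computed. Calibrating with $H_-$ gives $\topgr\kappa(\lambda)=q^{-(2\rho,2\rho)}/\overline{\mu}(v)$. The unknot is useless for this, since $N=0$. This calibration is legitimate only once you know the constant has the form $C^N$ with $C$ independent of $T_\Sigma$, which is exactly what the cutting argument supplies.
\end{itemize}
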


As a consequence of this theorem, the question of sharpness of the genus bounds of \cite{LNV:genus, LNV:genus-unrolled} is related to non-vanishing of universal $\uqg$-invariants of bottom tangles when paired with Hopf integrals. We do not pursue this direction in this paper any further (see Subsection \ref{subs: sharpness of ADO bound} for more comments on this).
\medskip

 For $\gg=\sl_2$ at even roots of unity (so $(r,\det(A))=2$), it is well-known that $\mathfrak{u}_q(\sl_2)$ is not ribbon, but an extension of it is (see \cite{FGST:modular, BBGe:logarithmic}), namely, the small quantum group with group-likes in the weight lattice $\uqLW(\sl_2)$ is ribbon. The above theorem holds provided the universal invariant $Z_{\TS}\in \uqLW(\sl_2)^{\ot N}$ is used. More generally, if $L$ is a lattice with $L_R\sb L\sb L_W$ such that $\uqLg$ is ribbon and the forgetful functor $F:\CC_{[0]}\to \Rep(\uqLg)$ preserves the ribbon structure (where $\CC=\CCgr$), then Theorem \ref{theorem: top coefficient formula with integral} will hold.

\subsection{The $\sl_3$ case at $r=4$} The invariant $\ADO_{\sl_3,4}$, which is a link invariant in $\Z[t_1^{\pm 1},t_2^{\pm 1}]$, was studied by Harper \cite{Harper:sl3-invariant} (note that our $t_i$ is the square of the $t_i$ of \cite{Harper:sl3-invariant}), so our theorems apply to this invariant as well. 

\begin{Corollary}
    The top coefficient of the invariant $\De_{\sl_3}$ of \cite{Harper:sl3-invariant} is Murasugi-sum multiplicative. The top coefficient of $\De_{\sl_3}$ is always $+1$ on fibred links.
\end{Corollary}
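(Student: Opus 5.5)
This corollary follows by specializing Theorem \ref{theorem: Murasugi sum multiplicativity of ADO} and Corollary \ref{corollary: fibred} to $\gg=\sl_3$ and $r=4$. The plan therefore has three steps: identify Harper's $\De_{\sl_3}$ with $\ADO_{\sl_3,4}$, check that "top coefficient" means the same thing on both sides, and evaluate the scalar in Corollary \ref{corollary: fibred}.

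\emph{Identification.} Since $r=4$ is even, $r'=2$ and $q=e^{2\pi i/4}=i$. Harper's invariant is defined as the Reshetikhin--Turaev invariant from the unrolled restricted quantum group $\ov{U}^H_i(\sl_3)$ with generic simple modules $\Vla$. Under the substitution $t_i\mapsto t_i^2$ noted in the text, it should coincide with $\ADO_{\sl_3,4}(L)$ up to a global normalization. I would check the normalization on the unknot and compare the twist conventions; any discrepancy should be at most a unit $\pm q^k$ or a framing correction, which can be absorbed. I regard this as the main point requiring care, since a sign or unit normalization could rescale top coefficients. A global constant factor, however, does not affect Murasugi-sum multiplicativity provided it takes the value $1$ on the unknot. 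This holds because the unknot is the unit for Murasugi sum (summing with a disk).

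\emph{Meaning of "top coefficient".} Here $(r'-1)\rho=\rho=\a_1+\a_2$. So $\top_{\sl_3,4}(L)$ is the coefficient of $t_1^{N}t_2^{N}$, with $N=2g(L)+s-1$. By the genus bound of \cite{LNV:genus-unrolled}, this is the top coefficient of $\De_{\sl_3}$ whenever that bound is attained, and it is zero otherwise. The multiplicativity statement is then verbatim Theorem \ref{theorem: Murasugi sum multiplicativity of ADO}.

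\emph{Fibred case.} Corollary \ref{corollary: fibred} gives $\top(L)=q^{-(2\rho,2\rho)(n_--m_-)}$. For $\sl_3$ we have $2\rho=2\a_1+2\a_2$, so
$$(2\rho,2\rho)=4(\a_1+\a_2,\a_1+\a_2)=4(2+2-2)=8.$$
With $q=i$ this gives $q^{-8(n_--m_-)}=1$, so the top coefficient is $+1$ on every fibred link, independent of $n_-$ and $m_-$.
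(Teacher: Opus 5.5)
Your proposal is correct and follows the paper's argument. The paper takes $\De_{\mathfrak{sl}_3}$ to be $\text{ADO}_{\mathfrak{sl}_3,4}$ (with its $t_i$ the square of Harper's) and reads off multiplicativity directly from Theorem \ref{theorem: Murasugi sum multiplicativity of ADO}. It then gets the fibred statement from Corollary \ref{corollary: fibred} using $(2\rho,2\rho)=8$ and $q^4=1$, exactly as you do; the paper treats the identification with Harper's invariant as given, so your extra normalization check is not part of its argument.
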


The last assertion follows from Corollary \ref{corollary: fibred} together with $(2\rho,2\rho)=8$ for $\sl_3$ and $q^4=1$.

\subsection{Relation to previous works} It is interesting to note that the HOMFLY polynomial $H_L(v,z)$ (where $v$ is the quantum variable) satisfies a similar but more restricted property than our main theorem: the top $z$-term is multiplicative under {\em diagrammatic} Murasugi sum \cite{MP:skein-star-product}. Here top term is also ``diagrammatic'': it is the coefficient of $z^{1-\chi(\S_D)}$ where $D$ is a diagram and $\S_D$ the associated Seifert's algorithm surface (note that by the diagrammatic genus bound of HOMFLY \cite{Morton:Seifert-circles-knot-poly}, this coefficient is zero unless $\S_D$ achieves the canonical genus). This implies, for instance, that the HOMFLY polynomial is $v$-monic on ``diagrammatically fibred'' links (e.g. homogeneous braid closures). However, it is not $v$-monic on all fibred links, the smallest example being $11n74$ which is a genus 2 fibred knot with top HOMFLY term equal to $(1-v^2)z^6$ (see KnotInfo). Hence, the top term of HOMFLY is not Murasugi-sum multiplicative in general.
\medskip

For Murasugi sum along a square (i.e. plumbing) our theorem was already shown for the (top coefficient of the) Links-Gould invariant in \cite{LNV:plumbing-multiplicative}, which is a quantum invariant derived from the quantum supergroup $\Uq(\sl(2|1))$ at generic $q$. The proof given in \cite{LNV:plumbing-multiplicative} relied on certain skein relations for LG, and it is not clear how to extend those techniques to prove the full Murasugi-sum statement (along any polygon) or even how to extend the plumbing result to higher rank. In contrast, the technique of the present paper, which is inspired by that of our previous works \cite{LNV:genus-unrolled, LNV:fibred-sl2}, allows to treat all Lie types in a uniform way. Moreover, an extension of our technique can be adapted to prove the Murasugi-sum statement for Links-Gould, as we do in \cite{PALNV:Murasugi-sum-Links-Gould}. Note that the Links-Gould invariant has the advantage of being defined at {\em generic $q$} which is essential to prove the plumbing-uniqueness result of \cite{LNV:plumbing-multiplicative}. 
\medskip

As mentioned previously, Cheng-Hedden-Sarkar have recently proved a Murasugi-sum multiplicativity theorem for the top knot Floer homology group \cite{CHS:Murasugi-sum-extremal-Floer}. Taking Poincar\'e polynomials, one obtains a $\Z[x^{\pm 1}]$-valued invariant of links which is non-zero on all links (by genus-detection of knot Floer homology) and that is multiplicative under Murasugi-sum. The invariants of the present paper are similar, except that they land in a cyclotomic ring and, more importantly, our invariants can vanish. Thus, we do not get maps on the Murasugi-sum Grothendieck rings defined in \cite{CHS:Murasugi-sum-extremal-Floer}.

%\tcr{Say something about Floer homology result. Note that their result is for index-minimizing surfaces where the index is defined as $i(\S):=\frac{|\p\S|-\chi(\S)}{2}$. Instead we minimize $1-\chi(\S)$ along connected $\S$. The reason for the difference is that knot Floer of links decategorifies to a polynomial of degree $m$ times the multivariable Alexander polynomial, where $m$ is the number of components of the link.}

%It is also interesting to note that a result of Able-Hirasawa \cite{Able-Hirasawa:Murasugi} states that any knot $K$ is a Murasugi sum along non-minimal-genus Seifert surfaces of any two other knots $K_1,K_2$. In contrast, our result obstructs various knots $K$ to be a Murasugi-sum (along minimal genus surfaces) of two given $K_1,K_2$'s (even if the genus of $K$ is the sum of the genus of $K_1,K_2$ and the Alexander obstruction gives no information).\medskip

\subsection{Sketch of proof ($\sl_2$ case)} We give a sketch of the proof of Theorem \ref{Theorem: main theorem for sl2 ADO}. If $\S$ is a Murasugi-sum of $\S_1$ and $\S_2$ along a polygon $P$ and $L=\p\S, L_i=\p\S_i$, then the Reshetikhin-Turaev operator invariant of $L_o$ (the link $L$ cut open at the polygon) colored with $V_{\lambda}$ can be expressed as $$F(L_o,V_{\lambda})=(\id_{V_{\lambda}}\ot \FF(T_{\S_2},X'))\circ I \circ (\FF(T_{\S_1},X)\ot\id_{V_{\lambda}})$$
where $T_{\S_1}$ is a bottom tangle associated to $\S_1$ colored with $X=V_{\lambda}\ot V_{\lambda}^*$, while $T_{\S_2}$ is a ``top tangle'' associated to $\S_2$ colored with $X'=V_{\lambda}^*\ot V_{\lambda}$ (see Figure \ref{figure: Murasugi sum in terms of bottom and top tangles}). Then the idea is to apply Propositions \ref{prop: prop surface and coevaluations} and \ref{prop: F(TS,X) as coev but upside-down} which essentially say that 
\begin{align}
    \label{eq: intro F(TS,X) prop}
    \FF(T_{\S_1},X)=\top_p^{ADO}(L_1)\cdot t^{\frac{p-1}{2}N_1}\cdot\lcoev^{\ot 2N_1}+ \text{ lower degree terms }
\end{align}
where $N_1=\dim H_1(\S_1)$ and similarly for $\FF(T_{\S_2},X')$, which (after some care with the meaning of ``lower degree terms'') imply that $$F(L_o,V_{\lambda})=\top_p^{ADO}(L_1)\cdot\top_p^{ADO}(L_2)\cdot t^{\frac{p-1}{2}\dim H_1(\S)}\cdot\id_{V_{\lambda}}+ \text{ lower degree terms}$$
implying Theorem \ref{Theorem: main theorem for sl2 ADO}.
\medskip

Thus the core of the argument is Proposition \ref{prop: prop surface and coevaluations} which is proved by pushing further the technique of \cite{LNV:genus-unrolled}, which is based on universal invariants of bottom tangles from small quantum groups, namely:

\def\uq{\mathfrak{u}_q(\sl_2)}
\def\uqLW{\mathfrak{u}^{L_W}_q(\sl_2)}

\begin{enumerate}
    \item Write the invariant $\ADO_p(L)$ in terms of the action of a universal $\uqLW$-invariant $Z_{T}\in \uqLW^{\ot N}$ acting on the module $X^{\ot N}$.
    \item Express $Z_{T}$ in the PBW basis. This  can be written as $Z_{T}=Z^{top}+Z^{low}$ where $Z^{top}$ has the maximal powers of $E$ (which is $E^{p-1}$ since $E^p=0$).
    \item By the algebraic bounds of \cite{LNV:genus-unrolled} (Lemma \ref{lemma: algebraic bounds}), $Z^{low}$ does not contributes to the top coefficient of ADO, hence we only have to look at $Z^{top}$.
    \item $Z_{T}$ has degree zero (where $|E|=1=-|F|$) since the $R$-matrix does, hence $Z^{top}$ has the form $$Z^{top}=\otimes_{i=1}^Nf_i(K)F^{p-1}E^{p-1}.$$
    \item By Lemma \ref{lemma: change FEmax by cointegral}, we can replace $F^{p-1}E^{p-1}$ by the cointegral $\La$ of $\uqLW$ up to lower degree terms and by Lemma \ref{lemma: CGPT's lemma for cointegral} (from \cite{CGPT}), the cointegral has the effect of cutting all the bands, which brings the tensor into the form in (\ref{eq: intro F(TS,X) prop}).
    \item Theorem \ref{theorem: top coefficient formula with integral} follows by tracing down the top coefficient along this argument.
\end{enumerate}

This argument relies almost solely on the Hopf algebra structure of $\uqLW$, that's why it can be generalized to higher rank almost word by word. We do not even need $\uqg$ to be ribbon, as long as a lattice $L\supseteq L_R$ has been chosen so that the braiding $c_{X,X}$ can be expressed as left multiplication by an element of $\uqL(\gg)^{\ot 2}$ (Lemma \ref{lemma: braiding on cXX expressed as R-matrix on some small uqgL}) which is enough to run the universal-invariant argument. This is why Theorem \ref{theorem: Murasugi sum multiplicativity of ADO} holds with no condition between $r$ and $\det(A)$. Note that in the above argument we didn't really used that surfaces have minimal genus. However, by the genus bounds, the corresponding coefficients would all be zero, so no non-trivial statement would be obtained.

\subsection{Plan of the paper} In Section \ref{sect: Murasugi sum} we define Murasugi sum. In Section \ref{sect: unrolled quantum groups} we recall various known facts about unrolled quantum groups, the ribbon structure of the category $\CCgr$ and the modules $\Va$. In Section \ref{sect: link invariants} we define the link polynomials $\ADOgr$, we prove some basic properties such as symmetry and recall the genus bound of \cite{LNV:genus-unrolled}. In Section \ref{sect: algebraic lemmas} we prove the purely algebraic lemmas necessary for Theorem \ref{theorem: Murasugi sum multiplicativity of ADO}. These are lemmas about the action of the quantum group on the module $X=\Va\ot\Va^*$, notably the action of the cointegral of $\uqL(\gg)$ (or $\uqg$ if ribbon). The proofs of Theorems \ref{theorem: Murasugi sum multiplicativity of ADO} and \ref{theorem: top coefficient formula with integral} are carried out in Section \ref{sect: proofs main thms}.

%\tcr{Use height for sum of coefs in roots.}

\subsection{Acknowledgements} The first author thanks Christian Blanchet for various interesting conversations and Bertrand Patureau-Mirand for email communication. The first author acknowledges support from Fondecyt INI 11261601. AI was used to make minor edits at the very end of the work. It was also used to try to remove the connected hypothesis in the main theorems, but this was unsuccessful. No writing was done by AI. Pictures were made with LatexDraw.

\section{Murasugi sum}
\label{sect: Murasugi sum}
\def\inte{\text{int}}

Let $\S_1,\S_2$ be two compact, oriented, embedded surfaces-with-boundary in $S^3$ that are separated by a plane. For each $i=1,2$, choose an embedded $2n$-polygon $P_i\sb \S_i$ ($n\geq 1$ is the same for both polygons) such that the $2n$-sides of $P_i$ alternate between the boundary and the interior of $\S_i$. A {\em $2n$-Murasugi sum} of $\S_1$ and $\S_2$ is any surface $\S$ obtained by gluing $P_1$ to $P_2$ with an orientation-preserving homeomorphism in such a way that the arcs in $P_1\cap \p\S_1$ (resp. $P_1\cap \inte(\S_1)$) are glued to the arcs in $P_2\cap \inte(\S_2)$ (resp. $P_2\cap \p\S_2$). Here $\inte(\S_i)$ denotes the interior of the surface $\S_i$. This last condition guarantees that the resulting surface $\S$ is a smooth oriented surface. See Figure \ref{fig:Murasugi sum}. 

\begin{figure}[h]
    \centering
    \includegraphics[width=9cm]{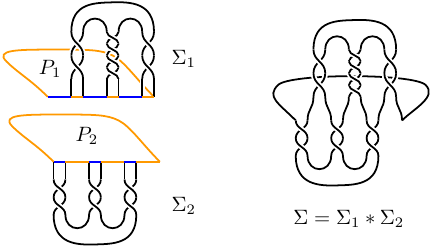}
    \caption{A Murasugi sum $\S$ of two pretzel surfaces $\S_1=P(-2,-4,-2)$ and $\S_2=P(-2,-2,-2)$ along hexagons $P_i\sb \S_i$. The long orange arc of $P_1\cap \p\S_1$ has to be though as an arc in $\inte(\S_1)$ very close to  $\p\S_1$. In this way, the hexagon $P_1$ is glued vertically to $P_2$ in such a way that orange arcs of $P_1\cap \inte(\S_1)$ are glued to orange arcs of $P_2\cap \p\S_2$ and similarly for blue arcs.}
    \label{fig:Murasugi sum}
\end{figure}

We denote $\S=\S_1*\S_2$ any surface obtained by Murasugi-summing $\S_1$ and $\S_2$. Of course, this notation is ambiguous, as usually there are infinitely many different ways to Murasugi-sum two surfaces (except in the case that both $\S_1$ and $\S_2$ are annuli).

\medskip
As mentioned in the introduction, the main theorem about Murasugi sum is the following theorem of Gabai \cite{Gabai:foliations-genera, Gabai:Murasugi1}:

\begin{theorem}(Gabai)
    Let $\S$ be a Murasugi sum of $\S_1$ and $\S_2$. Then $\S$ is a genus-minimizing Seifert surface for the link $L=\p\S$ if and only if each $\S_i$ is a genus-minimizing Seifert surface for $L_i=\p\S_i$, $i=1,2$.
\end{theorem}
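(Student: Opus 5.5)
The plan is to work with the complementary sutured manifolds, in the spirit of Gabai's original argument. For a compact oriented surface $\S\sb S^3$ let $M(\S)=S^3\sm \inte N(\S)$, with suture $\gamma=N(\p\S)\cap \p M(\S)$ and $R_\pm(\S)$ the two copies $\S\t\{\pm1\}$. Two standard facts would be used as black boxes. First, a Seifert surface $\S$ of a non-split link $L$ (with no closed components) is genus-minimizing among connected Seifert surfaces iff it is Thurston-norm minimizing in its relative class. Second, this holds iff the sutured manifold $(M(\S),\gamma)$ is taut, i.e.\ irreducible with $R_\pm$ norm-minimizing in $H_2(M,\gamma)$. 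With these, the statement reduces to: \emph{$M(\S_1*\S_2)$ is taut iff $M(\S_1)$ and $M(\S_2)$ are taut.} A preliminary Euler characteristic count gives $\chi(\S)=\chi(\S_1)+\chi(\S_2)-1$. Because the sum is along a polygon and $\S$ is connected, the count $2g(\S)+s-1$ is additive, matching $N=N_1+N_2$ in the main theorems.

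The geometric heart of the argument is as follows. Let $S^2$ be the separating sphere, so that $\S\cap S^2=P$ is the identified polygon. Then $D=S^2\sm \inte P$ is a disk properly embedded in $M(\S)$, and $\p D$ crosses the suture $\gamma$ exactly $2n$ times, alternating between $R_+$ and $R_-$. I will check that the sutured manifold decomposition $(M(\S),\gamma)\overset{D}{\rightsquigarrow}(M',\gamma')$ yields $M'\cong M(\S_1)\sqcup M(\S_2)$. This is done by thickening: on the $\S_1$-side of $S^2$, the polygon $P$ together with the ball on the other side fills in to produce the complement of $\S_1$, and symmetrically for $\S_2$. The alternation condition in the definition of Murasugi sum is exactly what makes the sutures match, since the arcs of $P_1\cap\p\S_1$ are glued to arcs of $P_2\cap\inte(\S_2)$.

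For the direction ``$\S_i$ minimal $\Rightarrow$ $\S$ minimal'', I would invoke Gabai's theorem that a decomposition $(M,\gamma)\rightsquigarrow(M',\gamma')$ along a surface with no closed separating components, whose result is taut, has taut source. This is the deep input, proved via taut foliations and the depth-one construction. Applying it to the disk $D$ gives tautness of $M(\S)$. This is the main obstacle. I would not reprove it, but I would check its hypotheses for $D$, namely that $D$ is a disk meeting every component of $\gamma$ essentially, and check that the result is taut. For the converse, suppose $\S_1$ is not minimal. Then $M(\S_1)$ contains a surface $R'$ with $\p R'=\p R_+(\S_1)$ and $\chi(R')>\chi(R_+(\S_1))$. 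I would first use a cut-and-paste / innermost-disk argument with the sphere $S^2$, together with irreducibility of the ball side, to isotope $R'$ so that it meets the $\S_2$-side ball only in a collar of $P$. I would then set $\S'=R'\cup(\S_2\sm P)$. This is a Seifert surface of $L$ with $\chi(\S')>\chi(\S)$, contradicting minimality of $\S$. The delicate point here is making $R'$ avoid the $\S_2$-side ball; if it fails, I would instead appeal again to the decomposition theory, which says decomposing a taut manifold along a product-type disk such as $D$ preserves tautness.
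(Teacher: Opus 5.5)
The paper gives no proof of this statement; it is quoted from Gabai, so the comparison is with Gabai's argument. Your direction ``$\Sigma_1,\Sigma_2$ minimal $\Rightarrow$ $\Sigma$ minimal'' is the standard sutured-manifold proof. The disk $D=S^2\setminus\mathrm{int}\,P$ decomposes $M(\Sigma)$ into $M(\Sigma_1)\sqcup M(\Sigma_2)$, and Gabai's theorem that tautness passes back up through such a decomposition does the work. (What you must check is that $\partial D$ really crosses $\gamma$, so that $D$ is not a disk with boundary in $R(\gamma)$. It is neither true nor needed that $D$ meets every component of $\gamma$.)

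\textbf{The gap is in the converse, and your fallback is false.} Since $\partial D$ meets $\gamma$ in $2n$ points, $D$ is a product disk only for plumbings ($n=1$). For $n\ge 2$, decomposing a taut sutured manifold along such a disk need not give a taut result: a disk meeting the suture of $D^2\times I$ four times can cut off a ball whose $R_+$ is a compressible annulus. For this particular $D$, tautness of the pieces is exactly the converse you are proving, so appealing to it is circular.

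The cut-and-paste therefore has to be carried out. What makes it work is that your competitor $R'$ lives in $M(\Sigma_1)$ and hence misses $P$.
\begin{itemize}
\item In $M(\Sigma_1)$ the region $B_2\setminus N(P)$ is a $3$-ball whose frontier is the disk $D$.
\item Near the $n$ edges of $P$ lying on $\partial\Sigma_1$, take the suture cores on the $B_1$ side, so that $\partial R'$ misses this ball.
\item Then $R'\cap S^2=R'\cap D$ consists of closed curves in the interior of $D$, and innermost ones bound disks in $D$ missing $R'$.
\item Surger along these disks (this changes $R'$; it is not an isotopy). Discard closed components, including whatever is left inside the ball. This never decreases $\chi$.
\item Add a collar back to $L_1$. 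You get $R'\subset B_1$ meeting $S^2$ exactly in those $n$ edges.
\item Glue $R'\cup\overline{\Sigma_2\setminus P}$ along those edges. It is bounded by $L$ and has $\chi=\chi(R')+\chi(\Sigma_2)-1>\chi(\Sigma)$.
\end{itemize}

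\textbf{The surgeries may disconnect $R'$.} So what you actually contradict is $\chi$-maximality of $\Sigma$, not minimality of its genus among connected surfaces. This matters. The tautness/Thurston-norm formulation characterizes $\chi$-maximal Seifert surfaces without closed components, whereas the paper's $g(L)$ is the connected genus. Your first black box, which identifies the two, therefore carries real content.

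With the connected convention, the direction ``$\Sigma_i$ minimal $\Rightarrow$ $\Sigma$ minimal'' is false for split summands. The unknotted untwisted annulus is a connected minimal-genus surface for the two-component unlink, and the Hopf band is minimal for the Hopf link. Their plumbing is a once-punctured torus bounded by the unknot; it compresses along the core of the annulus.

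So you must choose one of two fixes:
\begin{itemize}
\item Assume $L_1$, $L_2$ (and $L$) are non-split, and give an argument or reference that for non-split links the maximal Euler characteristic is realized by a connected Seifert surface.
\item Formulate and prove the theorem in the $\chi$-maximal convention.
\end{itemize}
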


Thus, we say that a link $L$ is a {\em Murasugi sum of links} $L_1,L_2$ if $L$ bounds a Seifert surface $\S$ that is a Muraugi sum of genus-minimizing Seifert surfaces $\S_1,\S_2$ for $L_1,L_2$ respectively. This depends on even more choices as before since minimal genus Seifert surfaces of links are not necessarily unique (this is measured by a complex known as the Kakimizu complex).

\section{Unrolled quantum groups}
\label{sect: unrolled quantum groups}

In this section we define quantum groups and their unrolled versions, we discuss PBW basis, the modules $\Vla$ and the ribbon category structure on $\CCgr$. For more details on unrolled quantum groups see \cite{GP:nonrestricted, GP:trace-projective-quantum-groups} which deals with the odd root of unity case though in the more general ``unrestricted'' case, or \cite{Rupert:unrolled} which deals with even roots of unity case in the restricted case. In the present paper we consider the restricted case in both even and odd roots of unity. The $\sl_2$ case (restricted unrolled at even roots of unity) is well-developed in various works such as \cite{CGP:non-semisimple, BCGP}.
\medskip

\begin{notation}
In what follows, $\gg$ denotes a finite-dimensional complex simple Lie algebra of rank $n$, $\hh$ a Cartan subalgebra of $\gg$, $\{\a_1,\dots,\a_n\}$ a set of positive simple roots, $(a_{ij})$ its Cartan matrix and $\Dep$ the set of positive roots. We work in the type ADE case only, so $(a_{ij})$ is symmetric. We define $H_1,\dots,H_n\in \hh$ by $\a_i(H_j)=a_{ij}$ for all $i,j$. We let $(\a_i,\a_j)=a_{ij}$ and extend it to a form $(-,-)$ on $\hh^*$ so that $(\la,\a_i)=\la(H_i)$ for all $\lla\in\hh^*$. We denote by $L_R, L_W\sb \hh^*$ the root and weight lattices of $\gg$ respectively. We denote $\rho=\frac{1}{2}\sum_{\b\in\Dep}\b\in L_W$. We also denote by $L_R^+$ the set of positive vectors in $L_R$ i.e. those vectors $\sum_{i=1}^nk_i\a_i$ with all $k_i\geq 0$ and some $k_j>0$ and for any $\c_1,\c_2\in L_R$ we write $\c_1> \c_2$ if $\c_1-\c_2\in L_R^+.$ \\

Let $q$ be a primitive root of unity of order $r$, say $q=e^{\frac{2\pi i}{r}}$, and for every $z\in \C$ set $q^z=e^{\frac{2\pi i z}{r}}$ and $[z]_{\qi}=\frac{\qi^z-\qi^{-z}}{\qi-\qi^{-1}}$. For $k\in\N$ denote $[k]_q!=[k]_q[k-1]_q\cdots [1]_q$. Let $r'=r$ if $r$ is odd and $r'=r/2$ if $r$ is even.  \\

For any $\lla\in\hh^*$ we let $\la_i=\la(H_i)$. The reader can assume that $\la_i\in \C\sm \Z$ for every $i$, though $\la$ typical is enough. As mentioned above, we set $t_i=q^{2\la_i}$ and $\tt^{\a}=t_1^{a_1}\dots t_n^{a_n}$ for any $\a=\sum a_i\a_i\in L_R$. With this notation we have $q^{(r'-1)(\la,2\rho)}=\tt^{(r'-1)\rho}$.\\

Some proofs use graphical notation. We use the convention that diagrams are read from bottom to top, so an arrow colored by $V$ represents $\id_V$ (resp. $\id_{V^*}$) if oriented upwards (resp. downwards).

\end{notation}

\subsection{Unrolled quantum groups}
\label{subs: unrolled quantum groups}
Let $n=\rank(\gg)$ and $q\in \C$ with $q\neq \pm 1$. The {\em unrolled quantum group} associated to $\gg$, denoted $\UHqg$, is the $\C$-algebra with generators $E_i,F_i,K_i^{\pm 1}, H_i$, $i=1,\dots,n$ and relations
\begin{align*}
[H_i,E_j]&=a_{ij}E_j, & [H_i,F_j]&=-a_{ij}F_j, & [H_i,K_j^{\pm 1}]&=0=[H_i,H_j],\\
K_iE_j&=\qi^{a_{ij}}E_jK_i, & K_iF_j&=\qi^{-a_{ij}}F_jK_i & K_iK_j&=K_jK_i, \\
[E_i,F_j]&=\d_{ij}\frac{K_i-K_i^{-1}}{\qi-\qi^{-1}}, & K_iK_i^{-1}&=K_i^{-1}K_i=1, & & 
\end{align*}
for every $i,j$ and
\begin{align*}
  \sum_{k=0}^{1-a_{ij}}(-1)^k\left[\begin{matrix} 1-a_{ij} \\ 
 k\end{matrix}\right]_{\qi}E_i^{1-a_{ij}-k}E_jE_i^k&=0,  \\
  \sum_{k=0}^{1-a_{ij}}(-1)^k\left[\begin{matrix} 1-a_{ij} \\ 
 k\end{matrix}\right]_{\qi}F_i^{1-a_{ij}-k}F_jF_i^k&=0, 
\end{align*}
for $i\neq j$. This is a Hopf algebra if we define 
\begin{align*}
\De(E_i)&=E_i\ot K_i+1\ot E_i, & \De(F_i)&=K_i^{-1}\ot F_i+F_i\ot 1, \\
 \De(K_i)&=K_i\ot K_i, & \De(H_i)&=H_i\ot 1+1\ot H_i
\end{align*}
for every $i$. For $\a=a_1\a_1+\dots+a_n\a_n\in L_R$ we denote $K_{\a}:=K_1^{a_1}\dots K_n^{a_n}$.

\medskip

We denote by $\Uqg\sb \UHqg$ the subalgebra generated by $E_i,F_i,K_i^{\pm 1}$. We denote by $\Uh$ the subalgebra of $\UHqg$ generated by the $H_i$'s. Let $U^+$ (resp. $U^-$) be the subalgebra generated by $E_1,\dots,E_n$ (resp. $F_1,\dots,F_n$). We also denote by $U^{\geq 0}$ (resp. $U^{\leq 0}$) the subalgebra generated by the $K_i^{\pm 1}$'s and $U^+$ (resp. $K_i^{\pm 1}$'s and $U^-$).

\medskip

The algebra $\UHqg$ is $L_R$-graded if we set $d(E_i)=\a_i, d(F_i)=-\a_i, d(H_i)=d(K_i)=0$ for each $i$ and we extend multiplicatively. This way $\Uqg$ becomes a $L_R$-graded Hopf algebra.

\subsection{PBW basis} 

Let $\Dep=\{\b_1,\dots,\b_N\}$ be the set of positive roots of $\gg$ which we assume is ordered according to a reduced expression of the longest element of the Weyl group. The root vectors $E_1,\dots, E_n$ correspond to the simple roots $\a_1,\dots,\a_n$. As usual, one can define root vectors $E_{\b}$ and $F_{\b}$ for all $\b\in \Dep$, see \cite{CP:BOOK} for more details. Then for every function $I:\Dep\to \N_{\geq 0}$ we define $$E^I:=E_{\b_N}^{I(\b_N)}\cdots E_{\b_1}^{I(\b_1)}, \ \ F^I:=F_{\b_N}^{I(\b_N)}\cdots F_{\b_1}^{I(\b_1)}.$$
If for every $I$ we denote $$d(I)=\sum_{\b\in\Dep}I(\b)\b\in L_R,$$ then $d(E^I)=d(I)$ and $d(F^I)=-d(I)$. We also denote $$|I|=\sum_{\b\in\De^+}I(\b)|\b|$$
where we set $|\a_i|:=1$ for positive simple roots and define $|.|$ additively on all positive roots.

\begin{proposition}\cite{CP:BOOK}
\label{lemma: PBW belongs to V+}
For each positive root $\b$, $E_{\b}\in U^+$ and $\{E^I \ | \ I:\Dep\to \N_{\geq 0}\}$ is a basis of $U^+$. Similarly $F_{\b}\in U^-$ and $\{F^I \ | \ I:\Dep\to \N_{\geq 0}\}$ is a basis of $U^-.$ 
\end{proposition}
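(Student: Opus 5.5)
This is the standard PBW theorem for $U_q(\gg)$ (Lusztig's theorem), cited from \cite{CP:BOOK}, so the plan is to recall the argument rather than invent a new one.

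First, define the root vectors. Fix the reduced expression $w_0=s_{i_1}\cdots s_{i_N}$ used to order $\Dep$, so that $\b_k=s_{i_1}\cdots s_{i_{k-1}}(\a_{i_k})$. Use Lusztig's braid group automorphisms $T_i$ of $\Uqg$ and set
$$E_{\b_k}=T_{i_1}\cdots T_{i_{k-1}}(E_{i_k}),$$
and similarly for $F_{\b_k}$.

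Second, show that $E_{\b}\in U^+$. A priori $T_{i_1}\cdots T_{i_{k-1}}(E_{i_k})$ is only an element of $\Uqg$ of degree $\b_k$. The standard argument is an induction on $k$. It uses the fact that $T_w(E_i)\in U^+$ whenever $\ell(ws_i)>\ell(w)$, which is proved by reducing to rank two: for two simple indices $i,j$, one checks directly from the explicit formulas for $T_i(E_j)$ (expressions in $E_i,E_j$ built from $q$-commutators) that the relevant elements lie in $U^+$.

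Third, prove the basis statement. For spanning, use the Levendorskii–Soibelman convexity relations: for $k<l$, the $q$-commutator $E_{\b_l}E_{\b_k}-q^{(\b_k,\b_l)}E_{\b_k}E_{\b_l}$ is a combination of ordered monomials in $E_{\b_{k+1}},\dots,E_{\b_{l-1}}$. A straightening induction on degree and ordering then shows that the ordered monomials span $U^+$. For linear independence, there are two options:
\begin{itemize}
\item compare graded dimensions with $U(\mathfrak{n}^+)$ via specialization $q\to1$, or equivalently by Kostant partition function counts;
\item use the nondegenerate Drinfeld–Lusztig pairing between $U^{\geq 0}$ and $U^{\leq 0}$. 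Here $\langle E^I,F^J\rangle$ is diagonal in PBW monomials with nonzero entries, and this forces independence.
\end{itemize}
The statement for $F$ follows by applying the Cartan anti-involution $\omega$ (with $E_i\mapsto F_i$) or by repeating the argument.

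The main obstacle is the independence step at roots of unity. The pairing argument works for generic $q$, but at a root of unity the pairing degenerates. The cleanest route is to prove the theorem over $\Z[q,q^{-1}]$ or $\Q(q)$ for the De Concini–Kac form (the algebra defined by the relations above, without divided powers) and then specialize. For this I would rely on the following fact: the integral form of $U^+$ generated by the $E_i$ is a free $\Z[q^{\pm1}]$-module with PBW basis. Freeness makes the specialization to $q=e^{2\pi i/r}$ preserve both spanning and independence.
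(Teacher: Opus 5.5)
Your outline is correct and follows the same route the paper relies on. The paper gives no argument of its own and simply cites Chari--Pressley, whose proof is the standard one you recall: Lusztig's braid group root vectors, Levendorskii--Soibelman straightening, independence at generic $q$, and the De Concini--Kac integral form to pass to roots of unity.

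One point should be made explicit. Freeness of the integral form only gives a PBW basis of the specialization $U^+_{\mathcal A}\otimes_{\mathcal A}\C$, and this must still be matched with the paper's $U^+$, which is generated by the $E_i$ inside $\UHqg$ and defined by relations at the given complex $q$. Spanning in the paper's $U^+$ requires the straightening relations to hold at $q$ itself. Independence follows from the algebra map from $\UHqg$ to the specialization with the $H_i$ adjoined.
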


\subsection{Unrolled restricted and small quantum groups} 

%For each $i=1.\dots,n$ let $r_i$ be the order of $\qi^2$. For $\b\in\Dep$ let $\rb=r_i$ if $\b$ is in the same Weyl group orbit as $\a_i$. The {\em restricted} unrolled quantum group is the quotient $\UHr$ of $\UHqg$ by the two-sided ideal generated by $E_{\b}^{\rb}$ and $F_{\b}^{\rb}$ for each $\b\in\Dep$. Note that this ideal is a Hopf ideal, this is because $E_i\ot K_i$ and $1\ot E_i$ $\qi^2$-commute. 

The {\em restricted} unrolled quantum group is the quotient $\UHr$ of $\UHqg$ by the two-sided ideal generated by $E_{\b}^{r'}$ and $F_{\b}^{r'}$ for each $\b\in\Dep$. Here $r'=r$ if $r$ is odd and $r'=r/2$ if $r$ is even. This ideal is a Hopf ideal, hence the quotient is a Hopf algebra. The subalgebra of $\UHr$ generated by $E_i,F_i,K_i^{\pm 1}$ is denoted $\Ur$ and simply called the restricted quantum group. The {\em small quantum group }$\uqg$ is the quotient of $\Ur$ by setting $K_i^r=1$ for all $i=1,\dots, n$. Let $Q_r$ be the subset of $L_R$ of linear combinations $c_1\a_1+\dots+c_n\a_n$ with $0\leq c_i\leq r-1$. %The {\em small Borel} is the subalgebra $\uqb$ of $\uqg$ generated by $K_i^{\pm 1}, E_i$ for $i=1,\dots,n$.
\medskip

\begin{proposition}
    The set $$\{K_{\a}F^IE^J \ | \ \a\in Q_r \text{ and } I,J:\Dep\to\{0,\dots,r'-1\}\}$$ forms a basis of $\uqg$.
\end{proposition}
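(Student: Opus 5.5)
The plan is to deduce the basis of $\uqg$ from the PBW theorem (Proposition \ref{lemma: PBW belongs to V+}) and the triangular decomposition of $\Uqg$, then pass to the quotients. First I show that the set spans. Using the commutation relations $K_iE_j=q^{a_{ij}}E_jK_i$, $K_iF_j=q^{-a_{ij}}F_jK_i$ and $[E_i,F_j]=\d_{ij}(K_i-K_i^{-1})/(q-q^{-1})$, every word in the generators can be rewritten as a combination of products $K_\a\cdot(\text{element of }U^-)\cdot(\text{element of }U^+)$. The proof is by induction on the number of pairs $E_i$ preceding $F_j$: each swap produces a correctly ordered term plus a shorter word containing $K$'s, and the $K$'s are moved to the left at the cost of powers of $q$. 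So $\Uqg=U^0U^-U^+$. By Proposition \ref{lemma: PBW belongs to V+}, $U^\pm$ are spanned by the $F^I$, resp. $E^J$. In $\uqg$ we have $E_\b^{r'}=F_\b^{r'}=0$. Moreover, by the Levendorskii--Soibelman convexity relations, a product of root vectors in non-PBW order is a combination of ordered monomials of the same degree. Hence $E^J$ with some $J(\b)\ge r'$ can be rewritten in terms of monomials with exponents less than $r'$; this uses that the ideal generated by the $E_\b^{r'}$ is spanned by ordered monomials having some exponent at least $r'$, which is standard. Since $K_i^r=1$, the group part reduces to $K_\a$ with $\a\in Q_r$. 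This gives spanning.

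Next comes linear independence, which I would obtain by a dimension count. It suffices to show $\dim\uqg\ge r^n\cdot r'^{2|\Dep|}$. I would use the standard fact (e.g.\ \cite{CP:BOOK}, or Lusztig's work) that the small positive part $\mathfrak{u}^+$, the quotient of $U^+$ by the ideal generated by the $E_\b^{r'}$, has basis the $E^J$ with $J(\b)<r'$; this follows because the $E_\b^{r'}$ span a central (or $q$-central) Hopf subalgebra, so the quotient is free of the expected rank. The same holds for $\mathfrak{u}^-$. Then I would establish the triangular decomposition: the multiplication map $\C[G]\ot\mathfrak{u}^-\ot\mathfrak{u}^+\to\uqg$, where $G=(\Z/r)^n$, is an isomorphism. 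One route is to construct a faithful representation, or to build $\uqg$ as a quotient of the Drinfeld double of the small Borel $\mathfrak{u}^{\ge0}$ by identifying the two copies of the torus. The double has dimension $(r^n r'^{|\Dep|})^2$ with the evident product basis. Quotienting by the central group algebra $\{K_\a\ot K_\a^{-1}\}$ of order $r^n$ divides the dimension by $r^n$, which yields exactly the claimed count.

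The main obstacle is this independence step, specifically checking that the relations $E_\b^{r'}=0$ and $K_i^r=1$ do not collapse more than expected. This requires that $E_\b^{r'}$ generates a Hopf ideal whose intersection with $U^+$ is spanned by the ordered monomials having some exponent at least $r'$. For even $r$ one must also take care that $q^{r'}=-1$ still makes $E_\b^{r'}$ skew-primitive up to $K$'s; this is where the choice of $r'$ as the order of $q^2$ is used. I would cite \cite{CP:BOOK} and Lusztig for the small Borel basis and reduce the rest to the double construction above.
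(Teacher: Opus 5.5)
The paper gives no proof of this proposition; it quotes it as a standard fact right after defining $\mathfrak{u}_q(\mathfrak{g})$. So your argument has to stand on its own.

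\textbf{Spanning.} This half is fine, and easier than you make it. An ordered monomial $E^J$ with some $J(\beta)\geq r'$ contains the consecutive block $E_\beta^{r'}$, so it is already zero in the quotient. Neither Levendorskii--Soibelman nor any description of the ideal is needed there.

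\textbf{The gap: independence.} In the Drinfeld double of $\mathfrak{u}^{\geq 0}$ with torus $G=L_R/rL_R$, the group-likes of the dual Borel are the characters of $G$. The functional dual to $E_i$ is skew-primitive with respect to the character $\chi_{\alpha_i}\colon K_\alpha\mapsto q^{\pm(\alpha,\alpha_i)}$. To recover $[E_i,F_i]=(K_i-K_i^{-1})/(q-q^{-1})$ you must identify $\chi_{\alpha_i}^{\pm1}$ with $K_i^{-1}$, hence $\chi_\gamma^{\pm1}$ with $K_\gamma^{-1}$ for all $\gamma\in L_R$. The map $\gamma\mapsto\chi_\gamma$ is the Cartan matrix $A$ reduced mod $r$, which is injective only when $\gcd(r,\det A)=1$. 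Otherwise some $K_\gamma$ with $\gamma\notin rL_R$ becomes $1$ in your quotient.

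Concretely, for $\mathfrak{sl}_2$ and $r=4$ one has $\chi_\alpha(K)=q^2=-1$, so the quotient of the double satisfies $K^2=1$. The map from $\mathfrak{u}_q(\mathfrak{sl}_2)$ to it kills $K^2-1$, although the claimed basis has $K^2\neq 1$. The count $r^n r'^{2|\Delta^+|}$ therefore says nothing about independence of the $K_\alpha$'s. As written, your argument only covers $\gcd(r,\det A)=1$, which is exactly the hypothesis the paper is careful not to impose (e.g.\ $\mathfrak{sl}_2$ at even $r$, which underlies the ADO invariants). The alternative ``faithful representation'' route is only named, not carried out.

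\textbf{A uniform repair.} Keep the torus infinite until the last step.
\begin{enumerate}
\item By De Concini--Kac (specialization of the integral form), $U_q(\mathfrak{g})$ at the root of unity has the triangular decomposition with PBW basis $K_\alpha F^IE^J$, $\alpha\in L_R$, $I,J$ arbitrary.
\item Let $\mathcal{J}^+\subset U^+$ be the span of the $E^J$ with some $J(\beta)\geq r'$, and similarly $\mathcal{J}^-\subset U^-$. PBW-orthogonality of the Drinfeld--Lusztig pairing, together with $[k]_q!\neq 0$ iff $k<r'$, shows that $\mathcal{J}^+$ is exactly the radical of the pairing on $U^+$.
\item Hence $\mathcal{J}^+$ is an ideal of $U^+$, equal to the ideal generated by the $E_\beta^{r'}$. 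It is also stable under Lusztig's skew-derivations $r_i,r_i'$, which compute $[F_i,x]$ for $x\in U^+$, so $[F_i,\mathcal{J}^+]\subseteq U^0\mathcal{J}^+$.
\item It follows that $U^-U^0\mathcal{J}^+ + \mathcal{J}^-U^0U^+$ is a two-sided ideal. It therefore equals the ideal generated by all $E_\beta^{r'},F_\beta^{r'}$, and the restricted quantum group has basis $K_\alpha F^IE^J$ with $\alpha\in L_R$ and $I,J$ restricted. This also replaces your appeal to (skew-)centrality of the $E_\beta^{r'}$: for even $r$ one only has $K_iE_j^{r'}=(-1)^{a_{ij}}E_j^{r'}K_i$, and in any case freeness is what is needed.
\item The $K_i^r$ are central since $q^r=1$. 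The algebra is free over $\mathbb{C}[K_1^{\pm r},\dots,K_n^{\pm r}]$ with basis $K_\alpha F^IE^J$, $\alpha\in Q_r$. Setting $K_i^r=1$ then yields the statement for every $r\geq 3$.
\end{enumerate}
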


We will denote $\Imax$ the function defined by $\Imax(\b)=r'-1$ for all $\b\in \Dep$. Thus, $E^{\Imax}$ is the element of maximal $\N$-degree in the Borel subalgebra of $\uqg$. Note that $$d(\Imax)=(r'-1)2\rho, \ |\Imax|=(r'-1)(2\rho,\rho).$$

\subsection{Weight modules}

A {\em weight module} is a $\UHr$-module for which the action of each $H_i$ is diagonalizable and $K_i$ acts as $\qi^{H_i}$ for every $i$. We denote by $\CCgr$ the category of finite-dimensional weight modules over $\UHr$. 

\medskip
Let $V$ be a weight module. For each $\lla$ in the dual Cartan $\hh^*$ let $$V(\lla)=\{v\in V \ | \ Hv=\lla(H)v, \forall H\in \hh\}.$$
These are the vectors of weight $\lla$ in $V$. Clearly $E_i(V(\lla))\sb V(\lla+\a_i)$ and $F_i(V(\lla))\sb V(\lla-\a_i)$ so for any $\lla\in\hh^*$, $\oplus_{\c\in L_R}V(\lla+\c)$ is a submodule of $V$. Thus, every simple weight module has all its weights in $\lla+L_R$ for some $\lla\in\hh^*$.
\medskip
\def\mmu{\boldsymbol{\mu}}
  
 Let $G=\hh^*/L_R$ and let $\CC_{[\lla]}$ be the category of weight modules whose weights are in the class of $[\lla]:=\lla+L_R\in G$. Then the whole category $\CCgr$ of weight modules splits as a direct sum of subcategories $$\CCgr=\bigoplus_{[\lla]\in G}\CC_{[\lla]}.$$
Moreover, one has $\CC_{[\lla]}\ot\CC_{[\mmu]}\sb \CC_{[\lla]+[\mmu]}$ for each $[\lla],[\mmu]\in G$, i.e., $\CCgr$ is a $G$-graded tensor category.

\begin{remark}
    The group $G$ is isomorphic to a torus $(\C^{\t})^n$ where $n=\rank(\gg)$. Thus, we like to call $\CCgr$ a {\em toric tensor category}. Though we do not explicitly use it, this torus grading is essential for our results and is the main difference with quantum groups (of Lie algebras) at generic $q$. It is very interesting to note that, for some quantized Lie superalgebras, the toric structure is present even when $q$ is generic. This is the case for the ``type I'' Lie superalgebras (except $\mathfrak{psl}(n|n)$), and its the reason why analogous results can be expected for them (see \cite{GLN:genus-for-Lie-superalgebras} for genus bounds). 
\end{remark}

 %This functor is essentially surjective if $gcd(r,\det(A))=1$, meaning, every object of $\Rep(\uqg)$ is isomorphic to an object in the image of $F$.

\subsection{The typical modules $\Vla$ and $V_{\la}$} For each $\lla\in \hh^*$, let $\Vla$ be the (finite dimensional) $\UHrqg$-module generated by a highest weight vector $v_0$ of highest weight $\lla$ (highest with respect to the partial order of $L_R$). In other words $$\Vla=\ov{U}_q(\gg)\ot_{U^{\geq 0}}\C$$ where $\C$ is a $U^{\geq 0}$-module via the map $\xi:U^{\geq 0}\to\C$ defined by $\xi(E_i)=0$ and $\xi(K_i)=q^{\lla(H_i)}$ for every $i$. Whenever $\lla$ is {\em diagonal}, meaning $$\lla(H_1)=...=\lla(H_n)=\la$$ for some $\la\in \C$ we will simply denote $V_{\lla}$ by $V_{\la}$. Note that $\tt^{\alpha}=q^{\la(\a,\rho)}$ for diagonal $\lla$ (at least in type ADE, since we use $(\rho,\a_i)=2$ for all $i$).  %Here $B$ denotes the (Borel) subalgebra of $\ov{U}_q(\gg)$ generated by the $K_i^{\pm 1}, E_i$ for $i=1,\dots,n$. 
 \medskip

We call $\lla$ {\em typical} if $q^{2( \lla+\rho,\b)-m(\b,\b)}\neq 1$ for all $\b\in\Dep$ and $m=1,\dots,r'-1$. This is equivalent to say that the vectors $v_I:=F^Iv_0$, for $I:\Dep\to \{0,\dots, r'-1\}$, form a basis of $\Vla$, see \cite{GP:nonrestricted}. We will call this the {\em standard basis} of $\Va$. The standard basis of $\Va^*$ is simply the dual of this basis.
\medskip

Note that $v_I$ has weight $\lla-d(I)$. Hence, the highest weight of $\Vla$ is $\lla$ and the lowest weight is $\lla-d(I_{max})=\lla-(r'-1)2\rho$ (with respect to the $L_R$-order). 

\begin{proposition}\cite{GP:trace-projective-quantum-groups, Rupert:unrolled}
    For typical $\lla$, $\Vla$ is simple and projective in $\CCgr$. In particular, if $\lla$ is diagonal, then $V_{\la}$ is simple and projective.
\end{proposition}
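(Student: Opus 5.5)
To prove that $\Vla$ is simple and projective in $\CCgr$ for typical $\lla$, I will argue in two stages: simplicity first, then projectivity. The diagonal case is immediate, since a diagonal $\lla$ that is typical is just a special case of the general statement.

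\emph{Simplicity.} By the PBW proposition and the typicality assumption, $\{v_I=F^Iv_0\}$ is a basis of $\Vla$. Hence $\Vla$ is free of rank $r'^{|\Dep|}$ over $U^-$ restricted, that is, over the algebra spanned by the $F^I$ with exponents $\le r'-1$. Every weight space $\Vla(\lla-\c)$ is spanned by the $v_I$ with $d(I)=\c$. In particular the lowest weight space $\Vla(\lla-(r'-1)2\rho)$ is one-dimensional, spanned by $v_{\Imax}=F^{\Imax}v_0$.

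Let $W\subseteq\Vla$ be a nonzero submodule. Being a weight module, $W$ contains a weight vector $w$. Applying suitable $F_\b$'s, I want to push $w$ down to a nonzero multiple of $v_{\Imax}$. Concretely, I will show that the pairing $U^-_{\text{res}}\times\Vla\to\Vla(\text{lowest})$ is nondegenerate in the following sense: for every nonzero $w$ there is some $F^J$ with $F^Jw=c\,v_{\Imax}$ and $c\neq0$. This holds because $F^{\Imax}$ spans the top degree of the restricted nilpotent algebra, so $U^-_{\text{res}}$ is a Frobenius algebra with that socle, and $\Vla$ is free over it.

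Next I need a vector $x$ with $x\,v_{\Imax}=c'v_0$ and $c'\neq0$. I take $x=E^{\Imax}$. The key computation is the classical Kac-type determinant
\[
E^{\Imax}F^{\Imax}v_0=\prod_{\b\in\Dep}\prod_{m=1}^{r'-1}\bigl(\text{const}\cdot[(\lla+\rho,\b)-m(\b,\b)/2]_q\bigr)\,v_0 ,
\]
which I can obtain by reduction to rank one along root vectors, or equivalently by the Shapovalov determinant at roots of unity restricted to the top degree. The quantum integers appearing here are nonzero exactly when $q^{2(\lla+\rho,\b)-m(\b,\b)}\neq1$, which is the typicality condition. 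Hence $v_0\in W$, so $W=\Vla$.

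\emph{Projectivity.} I will identify $\Vla$ with a relatively induced module from the unrolled Borel, $\Vla\cong\UHr\ot_{U^{H,\geq0}}\C_{\lla}$. In the weight category, the one-dimensional module $\C_{\lla}$ is projective over the unrolled Borel modulo the nilpotent part. The goal is then to show that the induction functor, composed with projection onto the weight block, makes $\Vla$ a direct summand of $\Vla\ot P$ for a projective $P$. The cleanest route I expect is the following:
\begin{enumerate}
\item Use simplicity, together with the fact that the endomorphism ring is $\C$.
\item Show that $\Vla$ is a direct summand of $V_{\lla-\mu}\ot V_\mu$-type tensor products, or more simply exhibit $\Vla$ as a summand of $\UHr\ot_{\Uh}\C_\lla$ restricted to the finite-dimensional weight part.
\end{enumerate}
Alternatively, I can compare dimensions. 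Projective covers in $\CC_{[\lla]}$ have dimension at least $r'^{|\Dep|}$, since they are free over $U^-_{\text{res}}$. Once typicality gives simplicity, $\Vla$ has this minimal dimension, is its own projective cover, and the surjection from the projective cover splits.

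I expect the main obstacle to be the nonvanishing of $E^{\Imax}F^{\Imax}v_0$, since it requires careful PBW commutation in higher rank. I would first try the Shapovalov determinant formula of De Concini–Kac specialized to the top degree.
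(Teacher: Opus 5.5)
Your simplicity argument is sound. $\Vla$ is free of rank one over the restricted algebra $U^-_{\mathrm{res}}$, whose one-dimensional socle is spanned by $F^{\Imax}$, so every nonzero weight submodule contains $v_{\Imax}$, and $E^{\Imax}v_{\Imax}=c'v_0$ with $c'\neq 0$ then gives $v_0$. Note that $c'$ is exactly the scalar $\kappa_{\La}(\la)$ of Lemma~\ref{lemma: CGPT's lemma for cointegral}. Its product formula does not follow from a naive ``reduction to rank one'', because root vectors of different roots do not commute. It is the top-degree case of the De Concini--Kac Shapovalov determinant, which you would have to cite. The paper gives no proof of this proposition and only refers to Geer--Patureau-Mirand and Rupert, so citing that formula is acceptable.

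The projectivity half, however, has a genuine gap. Items (1) and (2) only restate the goal and give no mechanism that produces a splitting. The dimension argument runs in the wrong direction. Freeness over $U^-_{\mathrm{res}}$ gives $\dim P(\Vla)\geq (r')^{|\Dep|}=\dim\Vla$, but concluding $P(\Vla)\cong\Vla$ needs the reverse inequality. That would require something like BGG reciprocity in $\CCgr$, which you do not invoke.

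The missing idea is that the same scalar $c'$ produces the splitting directly. Let $U^{H,0}$ be the subalgebra generated by the $H_i,K_i^{\pm 1}$, and set $P_{\lla}=\ov{U}^H_q(\gg)\ot_{U^{H,0}}\C_{\lla}$.
\begin{itemize}
\item By the triangular decomposition, $P_{\lla}$ has basis $\{F^IE^J\ot 1\}$, so it is a finite-dimensional weight module.
\item $\Hom(P_{\lla},M)\cong M(\lla)$ is exact in $M$, so $P_{\lla}$ is projective in $\CCgr$.
\item The vector $w=E^{\Imax}F^{\Imax}\ot 1$ has weight $\lla$ and satisfies $E_iw=0$, because $U^+_{\mathrm{res}}$ is zero in degree $d(\Imax)+\a_i$. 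Hence there is a morphism $\psi:\Vla\to P_{\lla}$ with $\psi(v_0)=w$.
\item With $\pi:P_{\lla}\to\Vla$ the canonical surjection, $\pi\psi(v_0)=E^{\Imax}F^{\Imax}v_0=c'v_0$. Since $v_0$ generates $\Vla$, this gives $\pi\circ\psi=c'\,\id_{\Vla}$.
\end{itemize}
Typicality ($c'\neq 0$) therefore exhibits $\Vla$ as a direct summand of $P_{\lla}$, and this does not even use simplicity.
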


We will be concerned with the module $X=\Vla\ot \Vla^*$. We will always consider this with the basis that is the tensor product of the standard basis of $\Va$ and $\Va^*$. Note that the weights of $X$ have the form $$\lla-d(I)+(-\lla+d(J))=-d(I)+d(J) \in L_R\sb \hh^*$$
so that the highest and lowest weights of $X$ are $\pm d(I_{max})$.

\subsection{Pivotal structure} The category $\CCgr$ is pivotal with left evaluations and coevaluations the usual ones of vector spaces, and right evaluations and coevaluations given by $$\rev_V(v\ot v')=v'(gv), \ \ \rcoev_V(1)=\sum v_i\ot g^{-1}v_i'$$
where $g=K_{2\rho}^{1-r'}$ (recall $2\rho=\sum_{\b\in \De^+}\b$). Here $(v_i)$ is a basis of $V$ and $(v_i')$ is the dual basis. It follows that there is an isomorphism $j_V:V^{**}\to V$ such that $$j_V^{-1}(v)(v')=v'(gv).$$
For $\Vla$ one has 
\begin{align}
    \label{eq: pivotal on Verma}
    j_{\Vla}(v''_I)=q^{(r'-1)(\la-d(I),2\rho)}v_I=\tt^{(r'-1)\rho}q^{-(r'-1)(d(I),2\rho)}v_I
\end{align}
\noindent where $v''_I$ is the basis of $\Vla^{**}$ determined by $v''_I(v'_J)=\d_{I,J}$.

\medskip

Let $X=\Vla\ot \Vla^*$. Then $X^*\cong \Vla^{**}\ot \Vla^*\cong X$ via the pivotal structure of $\CCgr$, denote by $\iota_X:X^*\to X$ such isomorphism. By (\ref{eq: pivotal on Verma}), this is given by 
\begin{align}
\label{eq: j on X}
    \iota_X((v_I\ot v_J')')=j_{\Vla}(v''_J)\ot v'_I=q^{(r'-1)(\lla-d(J),2\rho)}v_J\ot v'_I=\tt^{(r'-1)\rho}q^{n_J}v_J\ot v'_I
\end{align}
for some $n_J\in\Z$.  In particular, we have:

\begin{lemma}
\label{lemma: coeffs of iota X}
    If $\lla\equiv \la$ is diagonal, then $\iota_X$ has coefficients in $t^{(r'-1)(\rho,\rho)}\Q(q)$ in the standard bases of $X^*$ and $X$ (here $t=q^{2\la}$).
\end{lemma}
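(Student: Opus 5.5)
The plan is to read off the matrix of $\iota_X$ directly from formula (\ref{eq: j on X}) and check that every exponent of $q$ has the required shape. First, $\iota_X$ sends each standard basis vector $(v_I\ot v'_J)'$ of $X^*$ to a scalar multiple of the single standard basis vector $v_J\ot v'_I$ of $X$. So in the standard bases the matrix of $\iota_X$ is a permutation matrix scaled entrywise. The only nonzero entries are the scalars $q^{(r'-1)(\lla-d(J),2\rho)}$, one for each $J:\Dep\to\{0,\dots,r'-1\}$. It therefore suffices to show that each such scalar lies in $t^{(r'-1)(\rho,\rho)}\Q(q)$.

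Next, split the exponent by bilinearity of $(-,-)$:
$$q^{(r'-1)(\lla-d(J),2\rho)}=q^{(r'-1)(\lla,2\rho)}\cdot q^{-(r'-1)(d(J),2\rho)}.$$
The second factor is an integral power of $q$. Indeed, $d(J)\in L_R$ and $2\rho\in L_R$, and in type ADE the form $(\a_i,\a_j)=a_{ij}$ is integer-valued on $L_R$. So $n_J:=-(r'-1)(d(J),2\rho)\in\Z$, and $q^{n_J}\in\Q(q)$.

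For the first factor we use that $\lla$ is diagonal, meaning $(\lla,\a_i)=\la$ for all $i$. Write $2\rho=\sum_i c_i\a_i$ with $c_i\in\Z$. Then $(\lla,2\rho)=\la\sum_i c_i$. On the other hand, in type ADE we have $(\rho,\a_i)=1$ for each simple root, since $\rho$ is the sum of the fundamental weights and the roots are normalized with $(\a_i,\a_i)=2$. Hence $(\rho,2\rho)=\sum_i c_i$, and so
$$q^{(r'-1)(\lla,2\rho)}=q^{2\la(r'-1)(\rho,\rho)}=t^{(r'-1)(\rho,\rho)}$$
with $t=q^{2\la}$. This is just the identity $q^{(r'-1)(\la,2\rho)}=\tt^{(r'-1)\rho}$ from the Notation, specialized to $t_1=\dots=t_n=t$. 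Since $(\rho,\rho)=\sum_i c_i/2$, the power of $t$ may be a half-integer, and we interpret it via $q^{z}=e^{2\pi i z/r}$.

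Combining the two factors, every nonzero matrix coefficient has the form $t^{(r'-1)(\rho,\rho)}q^{n_J}$ with $n_J\in\Z$, which proves the claim. The only point needing care is the normalization: the Notation says "$(\rho,\a_i)=2$", whereas the conventions above give $(\rho,\a_i)=1$. Whichever convention is used, the first factor is a fixed power of $t$ that is independent of $J$, and it is exactly the power fixed by the identity in the Notation. There is no real obstacle beyond this bookkeeping.
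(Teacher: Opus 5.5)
Your proposal is correct and matches the paper's argument: the lemma is read off from the explicit formula $\iota_X((v_I\otimes v'_J)')=q^{(r'-1)(\boldsymbol{\lambda}-d(J),2\rho)}\,v_J\otimes v'_I$, whose $J$-dependent part $q^{-(r'-1)(d(J),2\rho)}$ is an integral power of $q$ and whose $J$-independent part $q^{(r'-1)(\boldsymbol{\lambda},2\rho)}$ equals $t^{(r'-1)(\rho,\rho)}$ for diagonal $\boldsymbol{\lambda}$. Your normalization is the right one, and there is no convention to choose: $(\alpha_i,\alpha_j)=a_{ij}$ forces $(\rho,\alpha_i)=1$ (it is $(2\rho,\alpha_i)$ that equals $2$), so your closing sentence hedging between conventions can simply be dropped.
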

\medskip

\begin{lemma}
\label{lemma: coev lemma}
    We have $(\iota_X\ot (\lcoev_{\Va}\circ\rev_{\Va}))\circ\rcoev_X=\lcoev_{\Va}\ot\lcoev_{\Va}$
    or graphically
    \begin{figure}[H]
        \centering
        \includegraphics[width=6cm]{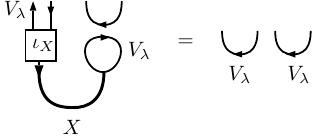}
    \end{figure}
    where the thick strand is colored by $X$ and all other strands are colored with $\Vla$.
\end{lemma}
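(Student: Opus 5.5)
The identity is an equality of morphisms $\unit\to X\ot X=\Vla\ot\Vla^*\ot\Vla\ot\Vla^*$. I would verify it by evaluating both sides on $1$ in the standard bases, using only the explicit formulas for $\rcoev$, $\rev$ and $\iota_X$ stated above. The right-hand side is $\sum_{I,K} v_I\ot v_I'\ot v_K\ot v_K'$. On the left-hand side, the factor $\lcoev_{\Va}\circ\rev_{\Va}$ contributes $\lcoev_{\Va}(1)=\sum_K v_K\ot v'_K$ times a scalar. So it suffices to show that
$$(\iota_X\ot \rev_{\Va})\circ \rcoev_X(1)=\sum_I v_I\ot v_I'\in X,$$
i.e. that this morphism equals $\lcoev_{\Va}$. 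Here $\rev_{\Va}$ is applied to the $X=\Vla\ot\Vla^*$ factor.

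First I expand $\rcoev_X(1)$ in the basis $\{v_I\ot v_J'\}$ of $X$ and its dual basis $\{(v_I\ot v_J')'\}$ of $X^*$. This gives a sum over $(I,J)$ of $(v_I\ot v_J')'$ tensored with $g^{-1}$ acting on $v_I\ot v_J'$, which is $g^{-1}v_I\ot g^{-1}v_J'$ since $g=K_{2\rho}^{1-r'}$ is grouplike. Next I apply $\rev_{\Va}$ to the $X$-factor. We have
$$\rev_{\Va}(g^{-1}v_I\ot g^{-1}v_J')=(g^{-1}v_J')(g\,g^{-1}v_I)=v_J'(g\,v_I),$$
where the last step uses the contragredient action $(g^{-1}\phi)(w)=\phi(gw)$. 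Since $v_I$ is a weight vector of weight $\lla-d(I)$, the vector $g\,v_I$ equals $q^{(1-r')(\lla-d(I),2\rho)}v_I$. So this term is $\d_{IJ}\,q^{(1-r')(\lla-d(I),2\rho)}$, and only the diagonal terms $(v_I\ot v_I')'$ survive, each with that scalar.

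Finally I apply $\iota_X$ using (\ref{eq: j on X}): $\iota_X((v_I\ot v_I')')=q^{(r'-1)(\lla-d(I),2\rho)}v_I\ot v_I'$. This factor exactly cancels the one produced by $\rev_{\Va}$, leaving $\sum_I v_I\ot v_I'=\lcoev_{\Va}(1)$, as required.

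The only real obstacle is bookkeeping of conventions. One must pin down the source and target orderings of $\rcoev_X$ (whether $X^*$ or $X$ comes first) and make sure $\rev_{\Va}$ is applied to the correct copy. One must also get the action of $g^{-1}$ on $\Vla^*$ right, so that the eigenvalue from $\rev$ is the inverse of the pivotal factor $q^{(r'-1)(\lla-d(I),2\rho)}$ in $j_{\Vla}$ rather than equal to it. If a sign in the exponent came out wrong, I would recheck it against the defining property $j_V^{-1}(v)(v')=v'(gv)$, which is exactly what makes the two factors inverse. As a sanity check, the identity is also the graphical statement that a right cap/cup pair composed with the pivotal isomorphism straightens to left cups, which is guaranteed by pivotality of $\CCgr$.
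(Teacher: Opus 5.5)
Your proposal is correct and is essentially the paper's proof: a direct computation in the standard bases, where $\rev_{\Va}$ kills the off-diagonal terms and its pivotal scalar $q^{(1-r')(\lla-d(I),2\rho)}$ cancels the factor $q^{(r'-1)(\lla-d(I),2\rho)}$ coming from $\iota_X$ in (\ref{eq: j on X}). The only cosmetic difference is that you absorb the $g^{-1}$ from $\rcoev_X$ into $\rev_{\Va}$ via the contragredient action, whereas the paper computes the weight factor $q^{(r'-1)(2\rho,d(J)-d(I))}$ explicitly and notes that it equals $1$ on the surviving terms $I=J$.
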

\begin{proof}
    Indeed
    \begin{align*}
    (\iota_X\ot (\lcoev_{\Va}&\circ\rev_{\Va}))\circ\rcoev_X(1)\\
    &=\sum_{I,J} q^{(r'-1)(2\rho, d(J)-d(I))}\iota_X(v_I\ot v_J')'\ot (\lcoev_{\Va}\circ\rev_{\Va})(v_I\ot v'_J)\\
     &=\sum_{I,J,C} q^{(r'-1)[(2\rho, d(J)-d(I))+(2\rho,\lla-d(J))]}v_J\ot v'_I\ot \d_{I,J}q^{(1-r')(2\rho,\lla-d(I))}v_C\ot v'_C\\
     &=\sum_{I,C}v_I\ot v'_I\ot v_C\ot v'_C\\
     &=\lcoev_{\Va}\ot \lcoev_{\Va}.
    \end{align*}
\end{proof}

\subsection{Braiding on weight modules}
\label{subs: braiding on weight} 
The category of weight $\UHr$-modules is braided with $c_{V,W}:V\ot W\to W\ot V$ defined by $$c_{V,W}(v\ot w)= \tau_{V,W}(\HH\TT(v\ot w))$$
where $$\TT=\sum_{I:\Dep\to\{0,\dots,r'-1\}} c_{I} \cdot E^I\ot F^I$$
with $$c_I=\prod_{\a\in\Dep}\frac{(\qa-\qa^{-1})^{I(\a)}}{[I(\a)]_q!}\qa^{\frac{I(\a)(I(\a)-1)}{2}}\in \Q(q),$$ and where
$\HH$ denotes the operator $V\ot W\to V\ot W$ defined by $$\HH(v\ot w)=q^{(\lla,\mmu)}v\ot w$$ if $v\in V(\lla)$ and $w\in W(\mmu)$. Note that for $I=\Imax$ one has
\begin{equation}
\label{eq: formula for cImax}
    c_{\Imax}=\left(\frac{(q-q^{-1})^{r'-1}}{[r'-1]_q!}q^{\frac{(r'-1)(r'-2)}{2}}\right)^{|\De_+|}=\left(\frac{q^{1-r'}}{r'}(q-q^{-1})^{2(r'-1)}\right)^{|\De_+|}
\end{equation}
where we used that $\prod_{k=1}^{r'-1}(q^k-q^{-k})=q^{\frac{r'(r'-1)}{2}}r'$ which easily follows by writing $X^{r'}-1$ as a product of linear factors according to roots (which are the $q^{-2i}$ with $i=0,\dots,r'-1$).
\medskip

\subsection{Ribbon structure}\label{subs: ribbon structure} The category $\CCgr$ is ribbon (see \cite{GP:nonrestricted, GP:trace-projective-quantum-groups} for odd roots, \cite{Rupert:unrolled} for even roots and \cite{CGP:non-semisimple} or \cite{Ohtsuki:BOOK} for the $\sl_2$ case at even roots) in a way compatible with the above braiding and pivotal structure. The twist on a typical (simple) $\Vla$ is given by 
\begin{align}
\label{eq: ribbon twist on Verma}
    \theta_{\Vla}=q^{(\lla,\lla-(r'-1)2\rho)}\cdot\id_{\Vla}=q^{(\lla,\lla)}\tt^{-(r'-1)\rho}\cdot\id_{\Vla}.
\end{align}

\subsection{$\CC_{[0]}$ and small quantum groups} As noted above, the category $\CCgr$ is graded by $G=\hh^*/L_R$ as a tensor category. Since $\CCgr$ is ribbon, it follows that the neutral component $\CC_{[0]}$ is ribbon as well. This category consists of weight modules with weights in $L_R$. Note that this is not a finite tensor category, for instance, the trivial $U_q(\sl_2)$-module becomes a module in $\CC_{[0]}$ in infinitely many distinct ways by setting $Hv_0=2rkv_0$ with $k\in \Z$. 
\medskip

\def\uqwg{\mathfrak{u}_q^{L}(\gg)}
\def\uqL{\mathfrak{u}_q^{L}(\gg)}

Let $\uqwg$ be the extension of the small quantum group whose Cartan part is $\C[L]$ where $L$ is a lattice such that $L_R\sb L\sb L_W$. In other words, $\uqwg$ has generators $E_i,F_i$ for $i=1,...,n$ as before and Cartan generators $K_{\c}$ for each $\c\in L$ satisfying $K_{\c}K_{\c'}=K_{\c+\c'}$ and $$K_{\c}E_i=q^{(\c,\a_i)}E_iK_{\c}, \ K_{\c}F_i=q^{-(\c,\a_i)}F_iK_{\c}, $$
as well as $K_{\b}^r=1$ for each $\b\in L_R$, plus the other usual quantum group relations (among them $E_{\a}^{r'}=0=F_{\a}^{r'}$ for all $\a\in\De^+$). Note that since $[L_W:L_R]$ is finite, $\uqL$ is finite-dimensional. Any module in $\CC_{[0]}$ becomes a module over $\uqL$ in a natural way, in fact one can take $L=L_W$: simply set $K_{\a}v=q^{(\a,\lla)}v$ for each $\a\in L_W$ and $v\in V(\lla)$. Since $(\a,\lla)\in \Z$ for $\a\in L_W,\lla\in L_R$ it follows that $K_{\a}^{2r}$ acts as the identity. Thus there is a forgetful functor $F:\CC_{[0]}\to \Rep(\uqL)$.
\medskip

For each typical $\la\in \hh^*$, the module $X=\Va\ot\Va^*$ is in $\CC_{[0]}$. Thus, we can consider $X$ as a module over $\uqL$ and the braiding $c_{X,X}$ as a $\uqL$-morphism.

\def\wH{\widehat{H}}
\def\uu{\boldsymbol{u}}
\def\vv{\boldsymbol{v}}

\begin{lemma}
\label{lemma: braiding on cXX expressed as R-matrix on some small uqgL}
For each $\gg$ of type ADE and $r\geq 3$ there is a lattice $L$ (that depends on $r$) with $L_R\sb L\sb L_W$ having the following property:
there exists $\ov{\KK}_L\in \Q(q)[L]^{\ot 2}$ such that for every $V,W\in \CC_{[0]}$ the braiding $c_{V,W}$ coincides with left multiplication by $\ov{\KK}_L\cdot \TT$, where $\TT\in \uqL^{\ot 2}$ is as before: $$c_{V,W}(v\ot w)=\tau_{V,W}(\ov{\KK}_L\cdot\TT(v\ot w)).$$ 
%the braiding $c_{X,X}:X\ot X\to X\ot X$ of the category $\CC_{[0]}$ can be expressed as  $$c_{X,X}(v\ot w)=\tau_{V,W}(\ov{\KK}\cdot\TT(v\ot w))$$where $\TT\in \uqL^{\ot 2}$ is as before and $\ov{\KK}\in \C[L]^{\ot 2}$. 
Moreover, when $(r,\det(A))=1$ one can take $L=L_R$ and $\ov{\KK}\cdot \TT$ to be the $R$-matrix of $\uqg$. When $\gg=\sl_2$ and $r$ is even one can take $L=L_W$ and $\ov{\KK}\cdot\TT$ the $R$-matrix of $\uqLW(\sl_2)$ of \cite{FGST:modular, BBGe:logarithmic}.
\end{lemma}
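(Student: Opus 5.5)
The braiding on weight modules is $c_{V,W}=\tau\circ \HH\TT$, and only the Cartan factor $\HH$ fails to be an element of a finite-dimensional algebra. So the plan is to show that, on $V\ot W$ with $V,W\in\CC_{[0]}$, the operator $\HH$ equals left multiplication by a fixed element $\ov{\KK}_L\in\Q(q)[L]^{\ot 2}$, for a suitable lattice $L$ with $L_R\sb L\sb L_W$. Here $\C[L]$ acts on modules in $\CC_{[0]}$ by $K_\c v=q^{(\c,\lla)}v$ for $v\in V(\lla)$. Since $\TT$ already lies in $\uqL^{\ot 2}$ (it involves only $E^I\ot F^I$ with $I\le\Imax$), this gives $c_{V,W}=\tau\circ(\ov{\KK}_L\TT)$.

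\textbf{Step 1: reduce to a finite abelian group.} On $V(\lla)\ot W(\mmu)$ with $\lla,\mmu\in L_R$, the operator $\HH$ acts by $q^{(\lla,\mmu)}$. Since $(L_R,L_R)\sb\Z$ and $q$ has order $r$, this scalar depends only on the classes of $\lla,\mmu$ in $L_R/rL_R$. So we need a function $\phi(\lla,\mmu)=q^{(\lla,\mmu)}$ on $(L_R/rL_R)^2$, written as a linear combination of characters $(\lla,\mmu)\mapsto q^{(\c,\lla)+(\c',\mmu)}$ with $\c,\c'\in L$. The element $K_\c\ot K_{\c'}$ acts by exactly such a character.

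\textbf{Step 2: choose $L$ and invert.} The characters available from $L$ are $\lla\mapsto q^{(\c,\lla)}$, which are well defined on $L_R/rL_R$ provided the character map $L\to\widehat{L_R/rL_R}$, $\c\mapsto q^{(\c,-)}$, factors through the quotient. I would choose $L$ so that this map is surjective. Its image is identified through the pairing $L_W\t L_R\to\Z$, which is perfect because $L_W$ is the dual lattice of $L_R$. Hence $L_W/rL_W$ surjects onto all characters of $L_R/rL_R$, so $L=L_W$ always works. Finite Fourier analysis then gives
$$\ov{\KK}_L=\frac{1}{|L_R/rL_R|}\sum_{\lla,\mmu}q^{(\lla,\mmu)}\,\pi_{\lla}\ot\pi_{\mmu},$$
where $\pi_{\lla}\in\Q(q)[L]$ are the idempotents projecting onto weight classes $\lla$ mod $r$, each written as a character sum $\frac{1}{r^n}\sum_{\c}q^{-(\c,\lla)}K_\c$. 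The coefficients lie in $\Q(q)$, since $q^{(\c,\lla)}$ with $\c\in L_W$, $\lla\in L_R$ is an integer power of $q$. I expect the only delicate point to be checking that this is well defined in $\uqL$, which imposes $K_\b^r=1$ only for $\b\in L_R$: every character used must be trivial on the relevant kernel, and taking the sum over $\c\in L_W/rL_W$ handles this. The lemma's statement that $L$ "depends on $r$" allows a smaller $L$, but $L_W$ suffices for the existence claim.

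\textbf{Step 3: the special cases.} When $(r,\det A)=1$, the form $(\,,)$ induces an isomorphism $L_R/rL_R\to\widehat{L_R/rL_R}$, because the Gram matrix $A$ is invertible mod $r$. So the characters $q^{(\c,-)}$ with $\c\in L_R$ already exhaust the character group, and one may take $L=L_R$. The resulting $\ov{\KK}=\frac{1}{r^n}\sum_{\a,\b\in Q_r}q^{-(\a,\b)}K_\a\ot K_\b$ (up to the standard normalization) is the usual Cartan factor, so $\ov{\KK}\TT$ is the standard $R$-matrix of $\uqg$. For $\sl_2$ with $r$ even, $L_W/L_R=\Z/2$ and the same construction with $L=L_W$ reproduces the $\uqLW(\sl_2)$ $R$-matrix of \cite{FGST:modular, BBGe:logarithmic}. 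Matching the normalizations of those references is the only remaining routine check.
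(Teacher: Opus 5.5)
Your proof is correct. It has the same skeleton as the paper's: reduce to showing that the diagonal operator $\mathcal{H}$ acts on $V\otimes W$, for $V,W\in\mathcal{C}_{[0]}$, as an element of $\mathbb{Q}(q)[L_W]^{\otimes 2}$. Equivalently, expand the bicharacter $q^{(\lambda,\mu)}$ on $L_R\times L_R$ in the characters $q^{(\gamma,-)}$, $\gamma\in L_W$.

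Where you differ is in how this expansion is obtained.
\begin{itemize}
\item The paper works in coordinates. It factors $q^{\sum a_{ij}a_ib_j}$ into $n^2$ scalar factors and proves the one-variable identity $q^{a\,bc}=\sum_{i,j}c_{ij}q^{bi+cj}$ by an explicit sum, with a case split on $\gcd(a,r)$. This gives concrete coefficients without any duality argument.
\item You use that $L_W$ is the dual lattice of $L_R$. Hence $\gamma\mapsto q^{(\gamma,-)}$ maps $L_W/rL_W$ onto the full character group of $L_R/rL_R$, and you conclude in one step by Fourier inversion on $(L_R/rL_R)^2$.
\end{itemize}
Your route is more conceptual. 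It also proves the case $(r,\det A)=1$ directly: invertibility of $A$ mod $r$ is exactly the condition that $L_R$ alone already yields every character. The paper only cites that case from earlier work.

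Two small repairs are needed.
\begin{itemize}
\item Your idempotents $\pi_{\boldsymbol{\lambda}}$ are already correctly normalized. So the prefactor $1/|L_R/rL_R|$ in your formula for $\overline{\mathcal{K}}_L$ must be dropped; as written, that element acts as $r^{-n}\mathcal{H}$.
\item In $\mathfrak{u}_q^{L_W}(\mathfrak{g})$ one only has $K_\beta^r=1$ for $\beta\in L_R$. So $K_\gamma$ with $\gamma\in rL_W$ need not equal $1$, and ``summing over $L_W/rL_W$'' really means fixing representatives. This is harmless: such $K_\gamma$ act by $q^{(\gamma,\lambda)}=1$ on every weight $\lambda\in L_R$, and the lemma only concerns the action on $\mathcal{C}_{[0]}$.
\end{itemize}
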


Note that this lemma does not implies that $\uqL$ is ribbon. We do not claim that $\ov{\KK}\cdot \TT$ is an $R$-matrix for $\uqL$, but that it works as an $R$-matrix for all objects in the image of the forgetful functor $F:\CC_{[0]}\to \Rep(\uqL)$.

\begin{proof}
By the formula for the braiding in $\CC_{[0]}\sb \CCgr$, the lemma amounts to prove that the diagonal operator $\HH$ can be written as left multiplication by $\ov{\KK}\in \Q(q)[L]^{\ot 2}$. We prove it for $L=L_W$ (which works for all $\gg$ and $r$). Then, we want to show that there are coefficients $c_{\a,\b}\in\Q(q)$ such that $$q^{(\la,\mu)}=\sum_{\a,\b\in L_W}c_{\a,\b}\cdot q^{(\a,\la)+(\b,\mu)}$$
for all $\la,\mu\in L_R$. Writing $\la=\sum a_i\a_i, \mu=\sum b_i\a_i$ and $\a=\sum u_i\wH_i, \b=\sum v_i\wH_i$ ($a_i,b_i,u_i,v_i\in \Z$) where $(\wH_i)\sb L_W$ is the dual basis of $(H_i)$, the above equation becomes 
\begin{align}
    \label{eq: qbAc as a sum}
    q^{\sum a_{ij}a_ib_j}=\sum_{\uu,\vv\in L_W} c_{\uu,\vv}\cdot q^{\sum u_ia_i+v_ib_i}.
\end{align}
For this, it suffices to prove that, given a fixed $a\in \Z$ there are coefficients $c_{i,j}\in \Q(q)$ such that 
\begin{align}
    \label{eq: writing qabc as a sum}
    q^{a(bc)}=\sum_{i,j}c_{ij}\cdot q^{bi+cj}
\end{align}
for all $b,c\in\Z$. For $a=0$ this is obvious (set $c_{00}=1$ and $c_{ij}=0$ otherwise).
When $(r,a)=1$ it suffices to set $c_{ij}=q^{-a'ij}$ where $a'$ is the inverse mod $r$ of $a$, and sum over $i,j\in \{0,\dots,r-1\}$. If $(r,a)=d>1$ then $\o=q^a$ is a root of 1 of order $r/d$ so what we just showed proves that $\o^{bc}=\sum c'_{i',j'}\o^{bi'+cj'}$ for some $c'_{i',j'}\in\Q(q)$. Setting $c_{i,j}=0$ if either $i,j$ are not divisible by $a$ and $c_{ai',aj'}=c'_{i',j'}$ proves (\ref{eq: writing qabc as a sum}). Applying (\ref{eq: writing qabc as a sum}) $n^2$ times to $q^{\sum a_{ij}a_ib_j}=\prod_{i,j}q^{a_{ij}a_ib_j}$ gives an expression of the form (\ref{eq: qbAc as a sum}).

\medskip
    The statement for $(r,\det(A))=1$ is proven in \cite[Lemma 2.4]{LNV:genus-unrolled}. The statement for $\sl_2$ is well-known and proved similarly.
\end{proof}

\medskip

\def\Im{I_{max}}

\section{Link invariants}
\label{sect: link invariants}
\def\Proj{\text{Proj}}

We assume the reader is familiar with the Reshetikhin-Turaev (RT) operator invariants of framed, oriented tangles \cite{RT1, Turaev:BOOK1}. We will denote by $F(T,V)$ the RT invariant of a tangle $T$ in which every component is colored with an object $V$ of the ribbon category $\CCgr$.

\def\dd{\boldsymbol{d}}
\def\LL{\boldsymbol{L}}
\subsection{Definition} Let $L$ be an oriented link in $S^3$. Give an arbitrary framing to $L$ and denote by $\LL$ the resulting framed, oriented link. Suppose $\LL$ is the closure of a $(1,1)$-tangle $\LL_o$. Let $\lla\in\hh^*$ be typical and suppose every component of $\LL_o$ is colored with $\Vla$. Since $\Vla$ is simple for typical $\lla$, the operator invariant $F(\LL_o, \Vla):\Vla\to \Vla$ is multiplication by a scalar $\lb \LL_o\rb$. We define $$\ADOgr(L,\lla):=q^{-(\lla,\lla-(r'-1)2\rho)w(\LL)} \lb \LL_o\rb$$
where $w(\LL)\in \Z$ is the writhe of $\LL$ and we claim this is an invariant of $L$. This is independent of the framing of $L$ chosen since $\theta_{\Vla}=q^{(\lla,\lla-(r'-1)2\rho)}\id_{\Vla}$. It is also independent of the place the link $L$ was cut open by the following argument: the category $\Proj(\CCgr)$ admits a two-sided modified trace $\tt$ \cite{GP:trace-projective-quantum-groups, Rupert:unrolled}, hence $\tt_{\Va}(F(\LL_o,\Vla))$ is a link invariant by \cite{GPV:traces-pivotal-cats}. Since $\Vla$ is simple $\tt_{\Va}(F(\LL_o,\Vla))=\lb \LL_o\rb \dd(\Va)$ (here $\dd$ denotes modified dimension) and since $\dd(\Vla)\neq 0$ and all components of $L$ have the same color, it follows that $\lb \LL_o\rb$ itself is a link invariant. Note that this argument is not needed if $L$ is a knot since in this case the $(1,1)$-tangle $\LL_o$ is uniquely determined by $\LL$.
\medskip

From the formula of the braiding, one can see that the coefficients of $c_{\Va,\Va}$ (in the tensor product basis of $\Va\ot\Va$) are in $\Q(q)[q^{(\lla,\lla)}, t_1^{\pm\frac{1}{2}},\dots, t_n^{\pm\frac{1}{2}}]$ where $t_i=q^{2\lla(H_i)}$. The framing factor in front of $\lb \LL_o\rb$ makes the $q^{(\lla,\lla)}$-factors to cancel out, so that $$\ADOgr(L)\in \Q(q)[t_1^{\pm\frac{1}{2}},\dots, t_n^{\pm\frac{1}{2}}].$$
 In fact the coefficients are in $\Z[q^2][t_1^{\pm\frac{1}{2}},\dots, t_n^{\pm\frac{1}{2}}]$, but we won't need this. We can also think of $\ADOgr(L)$ as an element in $\Z[q^2][\frac{1}{2}L_R]$ where $\tt^{\a}=t_1^{a_1}\dots t_n^{a_n}$ if $\a=\sum_{i=1}^na_i\a_i\in \frac{1}{2} L_R$. Then $\deg_{\tt}\ADOgr(L)$ is meant to be the difference of the highest $\frac{1}{2}L_R$-power of $\tt$ (with respect to the usual order of $L_R$) minus the lowest power. By the symmetry property below, if $\a\in \frac{1}{2}L_R$ is the highest power with non-zero coefficient, then $-\a$ is the lowest power so that $\det_{\tt}\ADOgr(L)=2\a\in L_R$.
 \medskip

 We will also denote by $\Pgr(L)$ the 1-variable polynomial obtained by equating all the variables of $\ADOgr$: $$\Pgr(L,t):=\ADOgr(L)|_{t_1=\dots=t_n=t}\in \Z[q^2][t^{\pm\frac{1}{2}}].$$

\subsection{Symmetry of ADO} 
Let $\o$ be the algebra automorphism of $\UHr$ defined by $\o(E_i)=F_i, \o(F_i)=E_i, \o(K_i)=K_i^{-1}, \o(H_i)=-H_i$, then $\Vla^*$ and $\o(\Vla)$ have the same character, so they are isomorphic for typical $\lla$ \cite{GP:trace-projective-quantum-groups, Rupert:unrolled}. 

\begin{proposition}
    $ADO_{\gg,r}(L)$ is insensitive to reversing the orientation of all components of $L$.
\end{proposition}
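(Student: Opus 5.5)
We prove that $\ADOgr(L)$ is unchanged when the orientation of every component is reversed. The idea is to compare the Reshetikhin--Turaev invariant of the reversed tangle with the invariant of the original tangle colored by the dual module.

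\textbf{Step 1 (reversal is dualization).} Reversing all components of $\LL_o$ gives a $(1,1)$-tangle $-\LL_o$. By the standard property of RT functors in a ribbon category, coloring a reversed strand by $V$ is the same as coloring the original strand by $V^*$. Concretely, $F(-\LL_o,\Vla)$ is identified, via the pivotal isomorphism $j$, with $F(\LL_o,\Vla^*)$: either as its transpose or directly as the same endomorphism after rotating by $\pi$. In either case, since both $\Vla$ and $\Vla^*$ are simple, the two scalars agree:
$$\lb -\LL_o\rb_{\Vla}=\lb \LL_o\rb_{\Vla^*}.$$
The writhe is unchanged by global reversal, so the framing correction is unaffected.

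\textbf{Step 2 (replace $\Vla^*$ by a highest-weight module).} By the preceding remark, $\Vla^*\cong\o(\Vla)$. Its highest weight is $-\lla+(r'-1)2\rho$, because the lowest weight of $\Vla$ is $\lla-(r'-1)2\rho$. This weight is typical whenever $\lla$ is, since the typicality condition is symmetric under this substitution. Hence
$$\Vla^*\cong V_{\lla'}, \qquad \lla'=-\lla+(r'-1)2\rho.$$
Two checks follow.

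First, the twists on $\Vla$ and $\Vla^*$ coincide (the twist commutes with duality). One can also verify this with (\ref{eq: ribbon twist on Verma}):
$$(\lla',\lla'-(r'-1)2\rho)=(-\lla+(r'-1)2\rho,\,-\lla)=(\lla,\lla-(r'-1)2\rho).$$
So the framing normalizations agree, and $\ADOgr(-L,\lla)=\ADOgr(L,\lla')$.

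Second, in the variables $t_i=q^{2\lla(H_i)}$, the substitution $\lla\mapsto\lla'$ becomes
$$\tt^{\a}\mapsto \tt^{-\a}\,q^{2(r'-1)(2\rho,\a)}.$$

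\textbf{Step 3 (symmetry under $\lla\mapsto\lla'$).} The remaining claim is $\ADOgr(L,\lla')=\ADOgr(L,\lla)$, i.e.\ the Weyl-type symmetry of ADO. This is the main obstacle, and I would attack it directly. Apply the automorphism $\o$, extended appropriately to $\UHr$, together with the antipode or the flip. This transports the $R$-matrix $\HH\TT$ to its version with $E$ and $F$ exchanged. The two $R$-matrices are then related by $\tau\circ(\cdot)^{-1}$-type identities, i.e.\ mirror-and-reverse relations.

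Alternatively, and more cleanly, I would use that the modified trace gives $\ADOgr$ as a link invariant. The isomorphism $\Vla^*\cong\o(\Vla)$ then intertwines the braidings up to the involution $L\mapsto -L$ composed with an overall transpose. This gives the identity needed for the claimed insensitivity.

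If this direct route becomes messy, I would fall back on the fact that the invariant with all colors $V$ equals the invariant with all colors $V^*$ and all orientations reversed. This is a tautology of RT invariants, obtained by replacing each strand and its orientation simultaneously. Combining it with Step 1 reduces the proposition to that identity, which is exactly the tautology.
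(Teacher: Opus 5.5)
Your Steps 1 and 2 are fine and match the paper's first two moves: $F(\ov{L}_o,\Vla)=F(L_o,\Vla^*)$ and $\Vla^*\cong\o(\Vla)$. The whole content of the proposition, however, sits in your Step 3, $F(L_o,\o(\Vla))=F(L_o,\Vla)$, and you do not prove it.

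\begin{itemize}
\item The fallback is circular. The ``tautology'' you invoke (coloring by $V^*$ equals reversing all orientations and coloring by $V$) is literally Step 1. Combining the two gives back $F(-L_o,V)=F(L_o,V^*)$ and leaves $F(L_o,V^*)=F(L_o,V)$ untouched. That identity is not a formal feature of Reshetikhin--Turaev invariants; it is exactly where a symmetry specific to $\UHr$ has to enter.
\item You cannot borrow the symmetry $\lla\leftrightarrow-\lla+(r'-1)2\rho$ either. In the paper (Proposition \ref{prop: symmetry of ADO}) it is deduced from the very statement you are proving.
\item Your other two routes only gesture at the needed identity. The ``$\tau\circ(\cdot)^{-1}$''/mirror-type relations point the wrong way, since inverse $R$-matrices model crossing changes. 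More fundamentally, no algebraic identity for the $R$-matrix helps unless it is paired with an actual isotopy of $L_o$, and you never invoke one.
\end{itemize}

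The missing idea is that isotopy: rotate the diagram of $L_o$ by $180^\circ$ about the vertical axis in the plane of the page.
\begin{itemize}
\item This is an isotopy of $(1,1)$-tangles, so $F(L_o,\Vla)$ is unchanged.
\item It preserves crossing signs but reverses left and right. So every crossing now acts through the flipped $R$-matrix, in which the roles of the $E$'s and $F$'s in $\TT$ are exchanged.
\item Left and right cups/caps are swapped, which turns the pivot $g$ into $g^{-1}$.
\item $\o$ exchanges $E_i\leftrightarrow F_i$ and sends $K_i\mapsto K_i^{-1}$ (so $g\mapsto g^{-1}$). It leaves $\HH$ unchanged, because weights are negated on both factors.
\end{itemize}
So evaluating the rotated diagram on $\Vla$ is the same as evaluating the original diagram on $\o(\Vla)$. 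Algebraically this uses that $\o$ is an algebra automorphism and a coalgebra anti-automorphism, so that $\o\ot\o$ carries the flipped $R$-matrix back to $\HH\TT$. Hence $F(L_o,\o(\Vla))=F(L_o,\Vla)$. Together with your Steps 1--2 and $\Vla^*\cong\o(\Vla)$, the proposition follows. The symmetry under $\lla\mapsto-\lla+(r'-1)2\rho$ then becomes a corollary rather than an input.
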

\begin{proof}
Let $\ov{L}$ be the link $L$ with the orientation reversed on all components. As usual, we have $F(\ov{L}_o, \Vla)=F(L_o, \Vla^*)$. But $\Vla^*\cong \o(\Vla)$ as mentioned above. Now, rotating a braid by 180 degress around a vertical axis does not change any invariant. But it clearly exchanges the $E$'s with $F's$ on all crossings, and the pivots $g$ to $g^{-1}$'s, hence it is the same as applying $\o$. This shows that $F(L_o,\o(\Vla))=F(L_o,\Vla)$, hence $F(\ov{L}_o,\Vla)=F(L_o,\Vla)$ from which $\ADOgr(\ov{L})=\ADOgr(L)$ follows.
\end{proof}

\begin{proposition}
\label{prop: symmetry of ADO}
The scalars $\ADOgr(L,\lla)$ satisfy $$\ADOgr(L,\lla)=\ADOgr(L,-\lla+(r'-1)2\rho).$$
In particular, if $\gg$ is simply-laced then $\ADOgr(L)$ is invariant under the transformation of variables $t_i^{\frac{1}{2}}\mapsto t_i^{-\frac{1}{2}}q^{-2}$ for all $i=1,\dots,n$.
\end{proposition}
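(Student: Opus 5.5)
The plan is to realize the substitution $\lla\mapsto -\lla+(r'-1)2\rho$ as an isomorphism of modules, namely $\Vla^*\cong V_{-\lla+(r'-1)2\rho}$, and then reduce to the orientation-reversal invariance just proved. First, for typical $\lla$ the weights of $\Vla$ are $\lla-d(I)$ with $I:\Dep\to\{0,\dots,r'-1\}$, so the weights of $\Vla^*$ are $-\lla+d(I)$. The highest of these is $-\lla+d(\Imax)=-\lla+(r'-1)2\rho$, with one-dimensional weight space spanned by $v'_{\Imax}$. Hence $\Vla^*$ and $V_{\lla'}$, where $\lla'=-\lla+(r'-1)2\rho$, have the same character. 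I will also check that $\lla'$ is typical. This amounts to verifying that the condition $q^{2(\lla+\rho,\b)-m(\b,\b)}\neq 1$ for $m=1,\dots,r'-1$ is preserved under $\lla\mapsto\lla'$ (using $(\b,\b)=2$ in type ADE). I expect the relevant map on the exponent set to be $m\mapsto r'-m$, possibly combined with the Weyl symmetry $\b\mapsto w_0$-image. By the same result from \cite{GP:trace-projective-quantum-groups, Rupert:unrolled} used for $\o(\Vla)$, simple typical modules in $\CCgr$ are determined by their characters, so $\Vla^*\cong V_{\lla'}$.

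Next I compare the invariants. Using $F(\ov{L}_o,\Vla)=F(L_o,\Vla^*)=F(L_o,V_{\lla'})$, together with $\ADOgr(\ov L)=\ADOgr(L)$ from the previous proposition, the scalars $\lb L_o\rb$ at $\lla$ and at $\lla'$ agree up to framing. The writhe is unchanged under reversing all orientations. It remains to match the framing correction factors, i.e.\ to show $(\lla,\lla-(r'-1)2\rho)=(\lla',\lla'-(r'-1)2\rho)$. This holds because $\lla'-(r'-1)2\rho=-\lla$ and $\lla'=-(\lla-(r'-1)2\rho)$, so the two products coincide. This also matches the fact that the twist on $\Vla^*$ equals the twist on $\Vla$. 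I will be careful that the orientation-reversal proposition was phrased at the level of the scalar $\lb L_o\rb$, so the comparison is exact and not just up to a unit.

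For the variable statement, with $t_i=q^{2\la_i}$ the substitution gives $\la'_i=-\la_i+(r'-1)(2\rho,\a_i)=-\la_i+2(r'-1)$ in simply-laced type. Therefore $t_i'^{1/2}=q^{\la'_i}=t_i^{-1/2}q^{2r'-2}$. Since $q^{2r'}=1$ in both parity cases, this equals $t_i^{-1/2}q^{-2}$. The main obstacle is the typicality and isomorphism step $\Vla^*\cong V_{\lla'}$. If the character argument is delicate, I would instead construct the isomorphism directly: $v'_{\Imax}$ is a highest weight vector of weight $\lla'$, since $E_i$ raises weights and no weight exceeds $\lla'$. This gives a nonzero map $V_{\lla'}\to\Vla^*$ by the universal property, which is an isomorphism by simplicity of $V_{\lla'}$ and equality of dimensions.
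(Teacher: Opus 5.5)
Your proposal is correct and follows the paper's proof. Both identify $\Vla^*\cong V_{-\lla+(r'-1)2\rho}$ via equal characters, chain $F(L_o,\Vla)=F(\ov{L}_o,\Vla)=F(L_o,\Vla^*)=F(L_o,V_{\lla'})$ using the orientation-reversal proposition, and finish with $\lla'(H_i)=-\la_i+2(r'-1)$ and $q^{2r'}=1$. Your extra checks are sound details the paper leaves implicit: typicality of $\lla'$ via $m\mapsto r'-m$ (no Weyl symmetry is needed), equality of the framing factors, and the direct highest-weight construction of the isomorphism.
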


Thus, if $\gg$ is simply-laced and if we set $$\Ngr(L,x_1,\dots,x_n):=\ADOgr(L,q^{-2}x_1,\dots,q^{-2}x_n)$$ then we get an honest symmetric polynomial, that is $\Ngr(x_1,\dots,x_n)=\Ngr(x_1^{-1},\dots,x_n^{-1})$. Similarly, the one-variable polynomial $\Pgr(L,t)$ is symmetric in the variable $x=q^2t$.

\begin{proof}
The modules $\Vla^*$ and $V_{-\lla+(r'-1)2\rho}$ have the same character, so they are isomorphic for typical $\lla$ \cite{GP:trace-projective-quantum-groups, Rupert:unrolled}. Thus, we have $$F(L_o,\Va)=F(\ov{L}_o,\Va)=F(L_o,\Va^*)=F(L_o,V_{-\lla+(r'-1)2\rho})$$ where the first equality follows from the previous proposition. This proves the first statement. Now, it is well-known that $2\rho(H_i)=(2\rho,\a_i)=(\a_i,\a_i)$ and this is equal to 2 for every $i$ in the simply-laced case. Thus, the right hand side is $\ADOgr$ with variables specialized to $q^{-\la_i+(r'-1)2\rho(H_i)}=q^{-\la_i-2}$ for every $i$ (here $\la_i=\lla(H_i)$), that is $$\ADOgr(q^{\la_1},\dots, q^{\la_n})=\ADOgr(q^{-\la_1-2},\dots, q^{-\la_n-2})$$
   from which the result follows.
\end{proof}

\def\uqb{\mathfrak{u}_q(\mathfrak{b})}
\subsection{The genus bound} In \cite{LNV:genus-unrolled} (or \cite{LNV:genus} for the $\sl_2$ case) we proved that $$\deg_{\tt}\ADOgr(L)\leq (r'-1)2\rho\cdot (2g(L)+s-1)$$ 
as elements of $L_R$, where $s$ is the number of components of $L$ and $g(L)$ the minimal genus of a connected Seifert surface of $L$. Since equating all the variables transforms each $\tt^{\a},\a\in \frac{1}{2}L_R$ into $t^{(\a,\rho)}$ we also get $$\deg_t\Pgr(L,t)\leq (r'-1)(2\rho,\rho)(2g(L)+s-1).$$
Note that $(r'-1)(2\rho,\rho)$ is the maximal $\N$-degree of the small Borel $\uqb$, i.e., the subalgebra of $\uqg$ generated by the $K_i^{\pm 1}, E_i$ for $i=1,\dots,n$ (where each $E_i$ has degree one), so the last bound is exactly the genus bound for 1-variable polynomials of \cite{LNV:genus} (extended to links).

\begin{proposition}
\label{prop: top of ADOgr is top of Pgr}
    The coefficient of $\tt^{(r'-1)\rho \cdot (2g(L)+s-1)}$ in $\ADOgr(L)$ is the same as the coefficient of $t^{(r'-1)(\rho,\rho)(2g(L)+s-1)}$ in $\Pgr(L)$.
\end{proposition}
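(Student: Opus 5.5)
The plan is to compare monomials directly under the specialization $t_1=\dots=t_n=t$, using the genus bound to rule out any other monomial landing on the top power of $t$. Write
$$\ADOgr(L)=\sum_{\a\in\frac12 L_R} c_\a \tt^{\a}.$$
Specializing all $t_i=t$ sends $\tt^{\a}$ to $t^{(\a,\rho)}$. Indeed, for $\a=\sum a_i\a_i$ we have $\sum a_i=(\a,\rho)$, because $(\a_i,\rho)=1$ in type ADE. (Equivalently, $(\a_i,2\rho)=(\a_i,\a_i)=2$, as used in the proof of Proposition \ref{prop: symmetry of ADO}.) Hence the coefficient of $t^{m}$ in $\Pgr(L)$ is $\sum_{(\a,\rho)=m}c_\a$.

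Now put $N=2g(L)+s-1$ and $\a_0=(r'-1)\rho N$, so that $(\a_0,\rho)=(r'-1)(\rho,\rho)N$ is the exponent in the statement. We need that the only $\a$ with $c_\a\neq 0$ and $(\a,\rho)=(\a_0,\rho)$ is $\a_0$ itself.

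This is where the genus bound of \cite{LNV:genus-unrolled} comes in. It says $\deg_{\tt}\ADOgr(L)\le (r'-1)2\rho N$ in $L_R$. Combined with the symmetry of Proposition \ref{prop: symmetry of ADO}, under which the highest and lowest powers are negatives of each other, this should give that every $\a$ with $c_\a\neq0$ satisfies $\a\le \a_0$, i.e. $\a_0-\a\in L_R^+\cup\{0\}$. If the highest power is $\a_h$ with lowest $-\a_h$, the bound reads $2\a_h\le 2\a_0$. More precisely, the bound in \cite{LNV:genus-unrolled} is proved as a containment: all exponents lie in the box of weights between $\pm\a_0$. That containment is what I would quote, rather than a single-max statement, since $L_R$ is only partially ordered.

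Granting this, suppose $\a\le\a_0$ and $(\a,\rho)=(\a_0,\rho)$. Then $\a_0-\a=\sum k_i\a_i$ with all $k_i\ge0$ and $\sum k_i=(\a_0-\a,\rho)=0$, so every $k_i=0$ and $\a=\a_0$. Therefore the coefficient of $t^{(\a_0,\rho)}$ in $\Pgr(L)$ is exactly $c_{\a_0}=\topgr(L)$.

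The main obstacle is making sure the genus bound gives $\a\le\a_0$ for every exponent with nonzero coefficient, not merely a statement about some maximal exponent. To settle this I would go back to the proof in \cite{LNV:genus-unrolled}, which bounds the $L_R$-degree of each term coming from $E^{I}$ insertions on each of the $N$ bands by $d(\Imax)$ per band. That argument gives exactly the componentwise containment needed.
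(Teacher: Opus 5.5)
Your proposal is correct and follows the same argument as the paper: specializing $t_1=\dots=t_n=t$ sends $\tt^{\a}$ to $t^{(\a,\rho)}$, the genus bound of \cite{LNV:genus-unrolled} places every exponent at or below $(r'-1)\rho\,(2g(L)+s-1)$ in the partial order, and any strictly smaller exponent has a strictly smaller $\rho$-pairing. You insist that the bound be used as a componentwise containment of all exponents, not as a statement about a single ``highest'' power in a partially ordered lattice. That is precisely what the paper's phrase ``the bound holds for the $L_R$-degree'' relies on implicitly.
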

\begin{proof}
    This is simply because $\a<\beta$ in $\frac{1}{2}L_R$ implies $(\a,\rho)<(\beta,\rho)$. Since the $\ADOgr$ bound holds for the $L_R$-degree, the only term of $\ADOgr$ that contributes to $t^{(r'-1)(\rho,\rho)(2g(L)+s-1)}$ after specialization is the term in degree $\tt^{(r'-1)\rho(2g(L)+s-1)}$.  %the $\ADOgr$ bound holds for $L_R$-degree. Thus, any other $\tt^{\alpha}$ appearing with non-zero coefficient in $\ADOgr$ must satisfy $\a<(r'-1)\rho (2g(L)+s-1)$ in $L_R$ so after setting $t_1=\dots=t_n=t$, the power of $t$ coming from $\tt^{\a}$ is $<$ the one coming from $\tt^{(r'-1)\rho(2g(L)+s-1)}$ which is $t^{(r'-1)(\rho,\rho)(2g(L)+s-1)}$.
\end{proof}

Write $2\rho=\sum_{i=1}^nk_i\a_i$ and $N=2g(L)+s-1$. Then, if the top term in $\ADOgr(L)$ is $a\cdot \tt^{(r'-1)\rho \cdot N}=a\prod_it_i^{\frac{(r'-1)}{2}Nk_i}$, then by the above symmetry property the least term is $$a\prod_it_i^{-\frac{r'-1}{2}Nk_i}q^{-2(r'-1)Nk_i}=a\prod_it_i^{-\frac{r'-1}{2}Nk_i}\cdot q^{2N\sum_{i=1}^nk_i}.$$

Noting that $(2\rho,2\rho)=\sum_i (2\rho,k_i\a_i)=2\sum k_i$ in the simply-laced case, we have thus shown that 

\begin{proposition}
    In type ADE, the least coefficient of $\ADOgr(L)$ (or $\Pgr(L)$) is related to the top coefficient by
$$\leastgr(L)=\topgr(L)\cdot q^{(2g(L)+s-1)(2\rho,2\rho)}.$$
\end{proposition}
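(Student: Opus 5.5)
The plan is to read this off from the symmetry of Proposition \ref{prop: symmetry of ADO}, combined with the genus bound. Since the bound is sharp exactly when the coefficients in question are nonzero, both sides of the claimed identity vanish together, so we only need to match coefficients.

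\emph{Step 1: set-up.} Put $N=2g(L)+s-1$ and write $2\rho=\sum_i k_i\a_i$. By the genus bound of \cite{LNV:genus-unrolled}, every monomial $\tt^{\a}$ occurring in $\ADOgr(L)$ satisfies $\a\le (r'-1)N\rho$ in $\frac12 L_R$. By definition, $\topgr(L)=a$ is the coefficient of $\tt^{(r'-1)N\rho}=\prod_i t_i^{\frac{r'-1}{2}Nk_i}$.

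\emph{Step 2: apply the symmetry.} Proposition \ref{prop: symmetry of ADO} gives $t_i^{1/2}\mapsto t_i^{-1/2}q^{-2}$. Under this substitution a monomial $c\,\tt^{\a}$ goes to $c\,\tt^{-\a}q^{-4\sum_i a_i}$ for $\a=\sum a_i\a_i$. This map is an order-reversing bijection on exponents. Hence the lowest possible exponent is $-(r'-1)N\rho$, and its coefficient is $a$ multiplied by the $q$-power coming from the top monomial.

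Next compute that power. For $a_i=\frac{r'-1}{2}Nk_i$ it equals $q^{-2(r'-1)N\sum_i k_i}$. Because $q$ has order $r$ and $2r'\equiv 0 \pmod r$ in both parities, $q^{-2(r'-1)m}=q^{2m}$. This gives the factor $q^{2N\sum k_i}$, as in the displayed computation.

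\emph{Step 3: rewrite the exponent.} In the simply-laced case $(2\rho,\a_i)=2$, so $(2\rho,2\rho)=\sum_i k_i(2\rho,\a_i)=2\sum_i k_i$. Therefore the factor is $q^{N(2\rho,2\rho)}$, and we get $\leastgr(L)=\topgr(L)\,q^{N(2\rho,2\rho)}$.

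\emph{Step 4: the one-variable polynomial.} For $\Pgr(L)$ the argument is the same, using Proposition \ref{prop: top of ADOgr is top of Pgr} together with its mirror version for the lowest term. The mirror version holds because $\a>\b$ implies $(\a,\rho)>(\b,\rho)$.

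The only delicate point is the root-of-unity reduction $q^{-2(r'-1)}=q^{2}$. It relies on $q^{2r'}=1$, which holds for both odd and even $r$.
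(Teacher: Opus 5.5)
Your proposal is correct and follows essentially the same argument as the paper. You apply the symmetry $t_i^{1/2}\mapsto t_i^{-1/2}q^{-2}$ of Proposition~\ref{prop: symmetry of ADO} to the top monomial $a\prod_i t_i^{\frac{r'-1}{2}Nk_i}$. You then reduce $q^{-2(r'-1)N\sum_i k_i}=q^{2N\sum_i k_i}$ using $q^{2r'}=1$, and rewrite $2\sum_i k_i=(2\rho,2\rho)$ in the simply-laced case. Your extra remarks only make explicit what the paper leaves implicit: that the coefficient identity holds whether or not the bound is sharp, and the mirror version of Proposition~\ref{prop: top of ADOgr is top of Pgr} for $\Pgr$.
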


\subsection{Hopf links}\label{subs: Hopf links computation} The invariant of the Hopf links are readily computed from the definition (as done in \cite[Prop. 45]{GP:nonrestricted}). Indeed, write an open Hopf link as a (partial) right closure of the double braiding $c\circ c$ with $c=c_{\Va,\Va}$. Clearly $c(v_0\ot v_I)=q^{(\la,\la-d(I))}v_I\ot v_0$ since $v_0$ is a highest weight vector. One has $$c(v_I\ot v_0)=q^{(\la,\la-d(I))}v_0\ot v_I+ \text{ terms of the form } c_{A,B} \cdot v_A\ot v_B$$
where $d(B)>d(I)$ and $c_{A,B}$ is some coefficient. These higher degree terms do not contribute to the partial right trace, so we can discard them. The pivot $g=K_{2\rho}^{1-r'}$, which acts on $v_I$, contributes an extra $q^{(1-r')(2\rho,\la-d(I))}$ factor. Since the diagram used has writhe 2, we obtain $$\ADOgr(H_+)=q^{-2(\la,\la+(1-r')2\rho)}\sum_{I:\De^+\to \{0,\dots,r'-1\}}q^{2(\la,\la-d(I))+(1-r')(2\rho,\la-d(I))}.$$
The top term in this expression is achieved when $I=0$, for which we see $\topgr(H_+)=1$. The least term is achieved at $I=\Imax$ and we find $$\leastgr(H_+)=q^{-(1-r')(2\rho,(r'-1)2\rho)}=q^{(2\rho,2\rho)}$$
where we use that $\gg$ is simply-laced in the last equality. Similarly, one finds $$\topgr(H_-)=q^{-(2\rho,2\rho)}, \ \leastgr(H_-)=1.$$

\section{Algebraic lemmas}
\label{sect: algebraic lemmas}

In this section, we gather the purely algebraic lemmas needed for the proof of Theorem \ref{theorem: Murasugi sum multiplicativity of ADO}. These concern the coefficients of the action of $\Uq(\gg)$ on $\Vla$ and $X=\Va\ot \Va^*$ as well as the action of the cointegral $\La$ of $\uqL$ (or $\uqg$ if ribbon) and the element $F^{\Imax}E^{\Imax}$ on the module $X$. We will suppose $\lla\equiv \la$ is diagonal and typical and denote $V_{\lla}$ by $V_{\la}$. Set $t^{\frac{1}{2}}=q^{\la}$.

\subsection{Algebraic bounds } The module $V_{\lambda}$ has basis the vectors $v_I:=F^Iv_0$ where $I:\Dep\to \{0,\dots,r'-1\}$. We will consider $V^*_{\la}$ with the dual basis $v'_J$, that is, $\lb v'_I,v_J\rb=\d_{I,J}$ and $X=V_{\lambda}\ot V_{\lambda}^*$ with the tensor product basis. We refer to these bases (and tensor products of these) as {\em standard bases}.

\begin{lemma}\cite{LNV:genus-unrolled}
\label{lemma: Ei coefficients action on Va}
    In the standard basis of $V_{\lambda}$, $E_i$ acts by a matrix with coefficients of the form $a(q)t^{\frac{1}{2}}+b(q)t^{-\frac{1}{2}}$ with $a(q),b(q)\in \Q(q)$.  In the dual basis of $V_{\lambda}^*$, $E_i$ acts by a matrix with coefficients of the form $c(q)+d(q)t^{-1}$, $c(q),d(q)\in \Q(q)$.
\end{lemma}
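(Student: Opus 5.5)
The plan is to reduce everything to the way $E_i$ commutes past elements of $U^-$, using the skew-derivations of $U^-$. Throughout, $\lla\equiv\la$ is diagonal and typical, so $K_j$ acts on a weight vector of weight $\lla-\c$ (with $\c\in L_R$) by $q^{\la-(\c,\a_j)}=t^{\frac12}q^{-(\c,\a_j)}$.

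\textbf{Step 1: the commutator of $E_i$ with $U^-$.} I will use the standard fact (Jantzen/Lusztig) that there are $\Q(q)$-linear skew-derivations $r_i,r_i':U^-\to U^-$ such that for every homogeneous $y\in U^-$
$$E_iy-yE_i=\frac{K_i\,r_i'(y)-r_i(y)\,K_i^{-1}}{q-q^{-1}}.$$
These are defined on generators by $r_i(F_j)=r_i'(F_j)=\d_{ij}$ and extended by twisted Leibniz rules, whose twists are powers of $q$. Hence $r_i(y)$ and $r_i'(y)$ lie in the $\Q(q)$-span of monomials in the $F_j$. In particular, this does not depend on $\la$.

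This identity is proved in $U_q(\gg)$ by induction on the length of monomials. It descends to $\UHr$, since the defining ideal generated by the $F_\b^{r'}$ is stable under $r_i,r_i'$ up to the ideal. I would check this via the $q$-binomial vanishing at order $r'$, or else avoid the issue by computing in $U_q(\gg)$ acting on the Verma-type quotient.

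\textbf{Step 2: apply to $v_I=F^Iv_0$.} Since $E_iv_0=0$, taking $y=F^I$ gives
$$E_iv_I=\frac{1}{q-q^{-1}}\Big(K_i\,r_i'(F^I)v_0-r_i(F^I)\,q^{-\la}v_0\Big).$$
The vector $r_i'(F^I)v_0$ has weight $\lla-d(I)+\a_i$, so $K_i$ acts on it by $t^{\frac12}q^{-(d(I)-\a_i,\a_i)}$. The second term carries the factor $t^{-\frac12}$.

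It remains to rewrite $r_i(F^I)$ and $r_i'(F^I)$ in the PBW basis $\{F^J\}$ of $U^-$. By Proposition \ref{lemma: PBW belongs to V+} (and the restricted quotient killing $F_\b^{r'}$), this rewriting has coefficients in $\Q(q)$ independent of $\la$. Then $F^Jv_0=v_J$, where the $v_J$ with $J$ valued in $\{0,\dots,r'-1\}$ form the standard basis. So every matrix coefficient has the form $a(q)t^{\frac12}+b(q)t^{-\frac12}$.

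\textbf{Step 3: the dual module.} On $V_\la^*$, $E_i$ acts by the transpose of $S(E_i)=-E_iK_i^{-1}$. Its matrix coefficient is
$$\lb E_i v'_J, v_I\rb=-\lb v'_J, E_iK_i^{-1}v_I\rb=-t^{-\frac12}q^{(d(I),\a_i)}\lb v'_J,E_iv_I\rb .$$
By Step 2 this equals $t^{-\frac12}(a t^{\frac12}+bt^{-\frac12})$ up to a power of $q$, that is $c(q)+d(q)t^{-1}$.

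The main obstacle is Step 1 in the restricted quotient: verifying the skew-derivation formula there, and confirming that PBW re-expression introduces no $\la$-dependence. Typicality is used only to know that the $v_J$ form a basis, which is what makes the expansion well defined.
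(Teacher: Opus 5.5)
Your proposal is correct. The paper gives no proof of this lemma here; it quotes it from \cite{LNV:genus-unrolled}. Your route is the natural argument for it:
\begin{itemize}
\item commuting $E_i$ through an element of $U^-$ produces exactly one factor $K_i^{\pm 1}$;
\item on a weight vector of $V_\la$, that factor contributes $t^{\pm\frac12}$ up to a power of $q$;
\item on the dual, $S(E_i)=-E_iK_i^{-1}$ contributes one further factor $t^{-\frac12}$, turning $a t^{\frac12}+bt^{-\frac12}$ into $c+dt^{-1}$.
\end{itemize}

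The point you flag as the main obstacle is not actually an obstacle. The commutator identity is an identity in $U_q(\gg)$, and $V_\la$ is a $U_q(\gg)$-module by pulling back along the quotient map to $\ov{U}_q(\gg)$. So you never need the restricted ideal to be stable under $r_i$ or $r'_i$.

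The restriction enters in only one place: $F^Jv_0=0$ whenever some $J(\b)\geq r'$. The $\la$-independence of the expansion coefficients is automatic, because $y\mapsto yv_0$ identifies $V_\la$ with the left regular module of the restricted $U^-$.
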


\begin{lemma}\cite{LNV:genus-unrolled}
  In the standard basis of $X$, and for each $i=1,\dots,n$,  $E_i$ acts by a matrix whose coefficients have the form $c(q)+d(q)t^{-1}$ with $c(q),d(q)\in\Q(q)$ while both $F_i, K_i$ act by matrices with $\Q(q)$-coefficients.
\end{lemma}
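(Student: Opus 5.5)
We deduce the statement from the previous lemma on $V_\la$ (coefficients of $E_i$ of the form $a(q)t^{\frac12}+b(q)t^{-\frac12}$ on $V_\la$, and $c(q)+d(q)t^{-1}$ on $V_\la^*$) together with a direct look at the actions of $F_i$ and $K_i$ in the standard bases, using the coproduct. Throughout, $X=V_\la\ot V_\la^*$ carries the tensor product basis $v_I\ot v'_J$.

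First the Cartan part. The vector $v_I$ has weight $\lla-d(I)$, so $K_iv_I=q^{\la-(d(I),\a_i)}v_I$, while $K_iv'_J=q^{-\la+(d(J),\a_i)}v'_J$. Since $\De(K_i)=K_i\ot K_i$, the basis vector $v_I\ot v'_J$ is an eigenvector of $K_i$ with eigenvalue $q^{(d(J)-d(I),\a_i)}\in\Q(q)$; the factors $q^{\pm\la}$ cancel. This is just the fact that $X\in\CC_{[0]}$ has weights in $L_R$.

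Next $F_i$. On $V_\la$, $F_iF^Iv_0$ lies in $U^-v_0$, and expanding $F_iF^I$ in the PBW basis of $U^-$ (Proposition \ref{lemma: PBW belongs to V+}) only uses the defining relations of $U^-$, which involve no $K$'s. Hence $F_i$ acts on $V_\la$ by a matrix with entries in $\Q(q)$, independent of $\la$; the truncation $F_\b^{r'}=0$ is also $\la$-free. On $V_\la^*$, $F_i$ acts by $-(S(F_i))^t=(F_iK_i)^t$, because $S(F_i)=-F_iK_i$ for $\De(F_i)=K_i^{-1}\ot F_i+F_i\ot 1$. The factor $K_i$ contributes a power $q^{\la+\text{integer}}$. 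With $\De(F_i)=K_i^{-1}\ot F_i+F_i\ot1$ on $X$, the term $K_i^{-1}\ot F_i$ gives $q^{-\la+\cdots}\cdot q^{\la+\cdots}\in\Q(q)$, and the term $F_i\ot 1$ is already $\la$-free. So $F_i$ acts on $X$ with $\Q(q)$ coefficients.

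Then $E_i$, with $\De(E_i)=E_i\ot K_i+1\ot E_i$. The second term acts by $\id\ot E_i$ and, by the previous lemma, has coefficients $c(q)+d(q)t^{-1}$. For the first term, $E_i$ on $V_\la$ has coefficients $a(q)t^{\frac12}+b(q)t^{-\frac12}$, and $K_i$ on $v'_J$ contributes $q^{-\la}\cdot q^{\text{integer}}=t^{-\frac12}\cdot(\text{element of }\Q(q))$. The product is then of the form $a'(q)+b'(q)t^{-1}$. Summing the two terms preserves this form.

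The point needing care is the first step of the $F_i$ argument: that $F_i$ acts on $V_\la$ with $\la$-free coefficients even after reducing to the truncated PBW basis. This holds because $V_\la\cong U^-/(F_\b^{r'})$ as a $U^-$-module for typical $\la$. That is exactly the standard-basis statement recalled earlier, so no $K$-dependence enters.
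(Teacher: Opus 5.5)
Your proof is correct. This paper gives no proof of its own and only cites \cite{LNV:genus-unrolled}; your reduction to the preceding $V_\la$/$V_\la^*$ lemma, splitting $E_i,F_i,K_i$ via the coproduct and tracking the $q^{\pm\la}$ factors coming from the Cartan parts, is the natural argument. One harmless slip: with $\De(F_i)=K_i^{-1}\ot F_i+F_i\ot 1$ one gets $S(F_i)=-K_iF_i=-q^{-2}F_iK_i$, and $F_i$ acts on $V_\la^*$ by the transpose of $S(F_i)$ with no extra minus sign; this only rescales entries by elements of $\Q(q)$, so your conclusion is unaffected.
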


From these we get the following algebraic degree bound (the algebraic counterpart of the genus bound of \cite{LNV:genus, LNV:genus-unrolled}):

\begin{lemma}\cite{LNV:genus-unrolled}
\label{lemma: algebraic bounds}
    A PBW basis vector $K^AF^IE^J$  acts on the standard basis of $X$ by a matrix whose coefficients are (plain, non-Laurent) polynomials in $t^{-1}$ of degree $\leq |J|$. In particular, all coefficients have degree $\leq (r'-1)(2\rho,\rho)$ in $t^{-1}$.
\end{lemma}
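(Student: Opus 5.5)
The plan is to combine the two preceding lemmas with the multiplicativity of matrix actions. First, the basis element $K^AF^IE^J$ acts on $X$ as the product of the matrices of $K^A$, $F^I$ and $E^J$ in the standard basis of $X$. By the preceding lemma, the $K_i$'s and $F_i$'s act on $X$ by matrices with coefficients in $\Q(q)$. Each $E_i$ acts by a matrix with entries in $\Q(q)+\Q(q)t^{-1}$, that is, by polynomials in $t^{-1}$ of degree $\leq 1$. Since $K^A$ and $F^I$ are products of the $K_i^{\pm1}$ and $F_i$, hence of $\Q(q)$-matrices, they contribute only $t$-independent factors.

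The main step is handling the root vectors $E_\b$ for non-simple $\b$. By Proposition \ref{lemma: PBW belongs to V+}, $E_\b\in U^+$ is homogeneous of degree $\b$. I will use the standard fact that $E_\b$ is a noncommutative polynomial in $E_1,\dots,E_n$ with $\Q(q)$-coefficients (from Lusztig's braid group action). By homogeneity, every monomial in this polynomial contains exactly $|\b|$ letters $E_i$. So the matrix of $E_\b$ on $X$ is a $\Q(q)$-combination of products of $|\b|$ matrices, each with entries of degree $\leq1$ in $t^{-1}$. Its entries are therefore polynomials in $t^{-1}$ of degree $\leq|\b|$. If one wants to avoid the explicit form of $E_\b$, it suffices to note that $E_\b$ lies in the degree-$\b$ graded piece of $U^+$, which is spanned by words in the $E_i$ of length $|\b|$. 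Multiplying over the factors of $E^J=\prod E_{\b}^{J(\b)}$ then gives entries that are polynomials in $t^{-1}$ of degree $\leq\sum_\b J(\b)|\b|=|J|$. Multiplying by the $t$-free matrices of $K^A$ and $F^I$ keeps them polynomial in $t^{-1}$ without raising the degree. This proves the first claim.

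For the final assertion, in the small/restricted setting we have $J(\b)\leq r'-1$ for every $\b$. Hence $|J|\leq|\Imax|=(r'-1)\sum_{\b\in\Dep}|\b|$. Since $|\b|=(\b,\rho)$ in type ADE (as $(\a_i,\rho)=1$), this equals $(r'-1)(2\rho,\rho)$, matching the value computed earlier.

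The only point I expect to need care with is the claim that $U^+_\b$ is spanned by words of length exactly $|\b|$. This holds because $U^+$ is generated by the $E_i$ with homogeneous relations. No Laurent (positive) powers of $t$ appear, since the entries of every factor are polynomial in $t^{-1}$.
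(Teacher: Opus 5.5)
Your proposal is correct and is essentially the paper's argument. The paper obtains the bound directly from the preceding lemma (the $K_i,F_i$ act on $X$ with $\Q(q)$-coefficients and each $E_i$ with coefficients $c(q)+d(q)t^{-1}$) by multiplying matrices, and your observation that $E_\b$ lies in the degree-$\b$ piece of $U^+$, hence is a combination of words of length $|\b|$ in the $E_i$, supplies exactly the detail the paper leaves implicit for non-simple root vectors; your computation $|\Imax|=(r'-1)\sum_{\b\in\Dep}(\b,\rho)=(r'-1)(2\rho,\rho)$ also matches the paper.
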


Recall that $|J|$ is the $\N$-degree of $d(J)=\sum_{\a\in\De^+} J(\a)\a\in L_R$ whenever all simple positive roots are set to have degree one.

\subsection{Action of the cointegral on $X$} Let $\La$ be a non-zero cointegral of $\uqL$. It is well-known that this is a 2-sided cointegral (i.e. $\uqL$ is unimodular, see e.g. \cite[Theorem 7.3]{BBGa:modified} when $L=L_R$, for other $L$ the proof is similar), hence it is a central element so left multiplication by $\La$ is a $\uqL$-morphism on any $\uqL$-module. As is well-known, an explicit cointegral is given by $\La=\frac{1}{|G|}\sum_{g\in G}g\cdot F^{\Imax}E^{\Imax}$ where $G$ is the group of group-likes of $\uqL$ and $\Imax:\De^+\to \N$ is given by $\Imax(\a)=r'-1$ for all $\a\in\De^+$.

\begin{lemma}
\label{lemma: CGPT's lemma for cointegral}
For any typical $\lla$, there is a constant $\kappa_{\La}(\lla)\in \Q(q,q^{\pm \la_1},\dots,q^{\pm\la_n})$ such that $$\La|_{\Va\ot\Vad}=\kappa_{\La}(\la)\cdot  \lcoev_{\Va}\circ \rev_{\Va}.$$
Moreover, if $\La=\frac{1}{|G|}\sum_{g\in G}g\cdot F^{\Imax}E^{\Imax}$, the scalar $\kappa_{\La}(\la)$ coincides with the coefficient of the highest weight vector $v_0$ in $E^{I_{max}}F^{I_{max}}v_0$.
\end{lemma}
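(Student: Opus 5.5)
Since $\La$ is a two-sided cointegral of $\uqL$, it is central, so left multiplication by $\La$ is a $\uqL$-endomorphism of $X=\Va\ot\Vad$. The first step is to promote this to an endomorphism in $\CCgr$. Because $\La$ has degree zero for the $L_R$-grading (it is a combination of $g\,F^{\Imax}E^{\Imax}$ with $d(F^{\Imax}E^{\Imax})=0$), it preserves weight spaces and therefore commutes with the $H_i$. It also commutes with $E_i,F_i,K_i$. Hence $\La|_X\in\End_{\CCgr}(X)$.

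The plan is then to use two facts.
\begin{itemize}
\item[(a)] The image of $\La|_X$ lies in the invariants of $X$, since $x\La=\e(x)\La$ for all $x$.
\item[(b)] $\La|_X$ vanishes on the radical of $X$.
\end{itemize}
Since $\Va$ is simple and projective, $X\cong\Hom(\Va,\Va)$ and $\Hom_{\CCgr}(\unit,X)\cong\End(\Va)=\C$, spanned by $\lcoev_{\Va}$. Dually, $\Hom_{\CCgr}(X,\unit)$ is spanned by $\rev_{\Va}$. Because $\La$ acts on $\unit$ by $\e(\La)$, and by (a) lands in invariants, $\La|_X=\lcoev_{\Va}\circ\phi$ for some $\phi\in\Hom_{\CCgr}(X,\unit)$. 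Equivalently, one can use the dual argument with $\La x=\e(x)\La$ to get $\La|_X=\psi\circ\rev_{\Va}$. Either way,
$$\La|_X=\kappa\,\lcoev_{\Va}\circ\rev_{\Va}$$
for some scalar $\kappa$.

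The one point requiring care is that $\phi$ must be a $\UHr$-morphism, not merely a $\uqL$-morphism. It is: $\phi$ is determined by composing $\La|_X$, which is a morphism in $\CCgr$, with the projection onto the invariant line. Rationality of $\kappa$ in $q,q^{\pm\la_i}$ follows from Lemma~\ref{lemma: Ei coefficients action on Va}, since all matrix coefficients of $\La$ on the standard basis lie in $\Q(q)(q^{\pm\la_i})$.

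To compute $\kappa$ for $\La=\frac{1}{|G|}\sum_g g\,F^{\Imax}E^{\Imax}$, I would evaluate both sides on a convenient vector and pair with a convenient covector. On the right-hand side,
$$(\lcoev\circ\rev)(v_0\ot v'_0)=q^{(\la,-(r'-1)2\rho)}\sum_C v_C\ot v'_C,$$
since the pivot $g=K_{2\rho}^{1-r'}$ acts on $v_0$ by $q^{(1-r')(\la,2\rho)}$. Pairing this with a suitable covector, say $(v_0\ot v'_0)^*$, yields the normalisation. On the left-hand side, $\La(v_0\ot v'_0)$ is computed with $\De^{(2)}$ of $F^{\Imax}E^{\Imax}$. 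It is cleaner, though, to pick a vector killed by most terms.

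Concretely, I would take $v_{\Imax}\ot v'_{0}$, or use the highest weight vector $v_0\ot v'_{\Imax}$ of $X$, which has weight $d(\Imax)$. Then $E^{\Imax}$ and the coproduct terms collapse by degree: only the term with all $E$'s acting on one factor survives. The surviving term is the $E^{\Imax}$ part of $\De(E^{\Imax})$ acting on $v'_{\Imax}$, which gives a multiple of $v'_0$. Applying $F^{\Imax}$ and pairing then reduces to the coefficient of $v_0$ in $E^{\Imax}F^{\Imax}v_0$, up to the pivot factor.

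The averaging over $G$ multiplies by the weight-zero action, which is $1$ on the relevant weight-zero component; the $\frac{1}{|G|}$ normalises this. I expect the pivot factors and the left-versus-right conventions of $\rev$ to cancel, as in Lemma~\ref{lemma: coev lemma}. This is the bookkeeping step I anticipate to be the main obstacle: tracking which Cartan factors from $\De(E_i)=E_i\ot K_i+1\ot E_i$ appear and checking that they cancel exactly against $q^{(1-r')(2\rho,\la)}$.
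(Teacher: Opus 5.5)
Your argument for the first assertion is correct, and it is more elementary than the paper's. The paper deduces it from \cite[Lemma 6.2]{CGPT}: on a projective $P$ the cointegral acts as $\sum_i x_i\circ y_i$ for dual bases of $\Hom(\unit,P)$ and $\Hom(P,\unit)$ under the modified-trace pairing. You use only that $\La$ is a two-sided integral. The image of $\La|_X$ consists of invariants, so $\La|_X=\lcoev_{\Va}\circ\phi$. Injectivity of $\lcoev_{\Va}$ and centrality of $\La$ then give $\phi(xv)=\e(x)\phi(v)$, so $\phi\in\C\,\rev_{\Va}$. Projectivity is not needed, and fact (b), which you never use, can be dropped.

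The point that actually needs a word is not whether $\phi$ lies in $\CCgr$. It is that (a) only produces $\uqL$-invariants, and $X$ may have weights $\mu\neq 0$ on which every $K_\c$, $\c\in L$, acts trivially. These are harmless. A weight-$\mu$ vector of $X\cong\End(\Va)$ killed by all $E_i,F_i$ is a map $T$ commuting with $E_i,F_i$ and shifting weights by $\mu$. Then $T(v_0)$ is a singular vector of weight $\lla+\mu$. Simplicity of $\Va$ forces $\mu=0$ or $T(v_0)=0$, and in the latter case $T=0$ because $v_0$ generates $\Va$ over $U^-$. (The paper is equally brief on this point.)

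\textbf{The genuine gap is in the computation of $\kappa$.} Both vectors you settle on have nonzero weight: $v_0\ot v'_{\Imax}$ has weight $d(\Imax)$ and $v_{\Imax}\ot v'_0$ has weight $-d(\Imax)$. Since $\rev_{\Va}(v_I\ot v'_J)=0$ for $I\neq J$, the right-hand side vanishes on these vectors, so by the first part $\La$ does too. Directly: $E^{\Imax}(v_0\ot v'_{\Imax})=0$ because $2d(\Imax)$ is not a weight of $X$. The identity then reads $0=0$ and says nothing about $\kappa$.

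Your description of the collapse is also reversed. The vector $v'_I$ has weight $-\lla+d(I)$, so $v'_{\Imax}$ is the highest weight vector of $\Vad$ and every $E_i$ kills it; it is $F^{\Imax}$, not $E^{\Imax}$, that sends it to a multiple of $v'_0$. The remaining step is deferred to ``I expect the factors to cancel'', which is exactly the second assertion of the lemma. Your initial candidate $v_0\ot v'_0$ does have weight zero. But it forces you to compute $E^{\Imax}v'_0$ and $F^{\Imax}v'_{\Imax}$ through the antipode and relate the product to $E^{\Imax}F^{\Imax}v_0$, which you do not do.

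The fix is to test on the weight-zero vector $v_{\Imax}\ot v'_{\Imax}$ and read off its own coefficient.
\begin{itemize}
\item $\La_G$ acts on it by $1$.
\item Since $v'_{\Imax}$ is highest in $\Vad$, the only summand of $\De(E^{\Imax})$ that survives on it is $E^{\Imax}\ot K_{d(\Imax)}$.
\item Since the second factor must return to $v'_{\Imax}$, only $F^{\Imax}\ot 1$ in $\De(F^{\Imax})$ contributes.
\end{itemize}
Write $E^{\Imax}F^{\Imax}v_0=a\,v_0$. The coefficient of $v_{\Imax}\ot v'_{\Imax}$ in $\La(v_{\Imax}\ot v'_{\Imax})$ is then $a\,q^{(d(\Imax),\,d(\Imax)-\lla)}$. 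In $\kappa\,\lcoev_{\Va}\circ\rev_{\Va}(v_{\Imax}\ot v'_{\Imax})$ the same coefficient is $\kappa\,v'_{\Imax}(g\,v_{\Imax})=\kappa\,q^{(1-r')(2\rho,\,\lla-d(\Imax))}=\kappa\,q^{(d(\Imax),\,d(\Imax)-\lla)}$, using $g=K_{2\rho}^{1-r'}$ and $d(\Imax)=(r'-1)2\rho$. So the Cartan factor from $\De(E_i)=E_i\ot K_i+1\ot E_i$ cancels the pivot exactly, and $\kappa=a$.

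This is in substance the paper's computation. It takes the pivotal partial trace over $\Va$ and applies it to $v'_{min}=v'_{\Imax}$, which isolates the same term $g\ot g\cdot F^{\Imax}\ot 1\cdot E^{\Imax}\ot K^{d(\Imax)}$.
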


\begin{proof}
The first part is essentially a special case of \cite[Lemma 6.2]{CGPT} as we now explain. For every projective $\uqL$-module $P$, it is shown in \cite[Lemma 6.2]{CGPT} that there is a non-degenerate pairing\footnote{The above non-degenerate pairing is the so called ``modified trace'' of Geer, Patureau-Mirand and Turaev \cite{GPT:modified}, but we won't need this.} $$\Hom_{\uqL}(\unit,P)\otimes \Hom_{\uqL}(P,\unit)\to \C$$
such that if $(x_i)$ is a basis of $\Hom_{\uqL}(\unit,P)$ and $(y_i)$ is the dual basis of $\Hom_{\uqL}(P,\unit)$ under this pairing, then the cointegral $\La$ acts on $P$ as 
\begin{equation}
\label{eq: cointegral on projective}
    \La|_P=\c\sum_i x_i\circ y_i
\end{equation}
where $\c$ is some scalar (which can be chosen to be 1 if appropriate normalizations for the pairing and $\La$ are chosen). Now if we let $P=\Vla\ot \Vla^*$ (which is projective since $\Vla$ is), then both $\Hom_{\CCgr}(\unit,P), \Hom_{\CCgr}(P,\unit)$ are 1-dimensional (since both are isomorphic to $\End_{\CCgr}(\Vla)$ and $\Vla$ is simple), hence $\lcoev_{\Va}$ and $\rev_{\Va}$ (which are non-zero) are bases of each space. It follows that $\lcoev_{\Va}$ and $\rev_{\Va}$ are dual bases up to a scalar, so that the lemma follows by plugging these bases in (\ref{eq: cointegral on projective}).
\medskip

Now fix $\La=\frac{1}{|G|}\sum_{g\in G}g\cdot F^{\Imax}E^{\Imax}$. The scalar $\kappa_{\La}(\la)$ is computed explicitly by taking left-traces on both sides of the above equation: this gives
 $$\tr_{\Va}(K_{2\rho}^{r'-1}\La_{(1)})\La_{(2)}|_{\Va^*}=\kappa(\la)\id_{\Va^*}$$
Applying this to the highest weight vector $v'_{min}\in \Vla^*$ (the dual of the lowest weight vector $v_{min}=F^{I_{max}}v_0\in \Va$) it follows that one cannot have a power of some $E_i$ in $\La_{(2)}$, hence all positive degree is in $\La_{(1)}$. But then all negative degree also has to be in $\La_{(1)}$, otherwise it does not contributes to the trace. Thus, the only term in $\De(\La)$ that contributes to $\kappa(\la)$ is $$\frac{1}{|G|}\sum_{g\in G} g\ot g\cdot F^{\Imax}\ot 1\cdot E^{\Imax}\ot K^{d(\Imax)}.$$

Noting that $K^{d(\Imax)}=K_{2\rho}^{r'-1}$, we get
\begin{align*}
   \kappa(\la)v'_{min}= \tr_{\Va}(K_{2\rho}^{r'-1}\La_{(1)})\La_{(2)}v'_{min}&=\frac{1}{|G|}\sum_{g\in G}gK^{d(\Imax)}v'_{min}\tr_{\Va}(K^{d(\Imax)}gF^{\Im}E^{\Im})\\
    &=\frac{1}{|G|}\sum_{g\in G}gv'_{min}\tr_{\Va}(gF^{\Im}E^{\Im})
\end{align*}
Only the vector $v'_{min}$ contributes to the trace inside the previous formula (because $E^{\Imax}$ is non-zero only on $v_{min}$). Thus, if $F^{\Imax}E^{\Imax}v_{min}=\wt{\kappa}(\la)v_{min}$ then
\begin{align*}
    \kappa(\la)v'_{min}=\wt{\kappa}(\la)v'_{min}
\end{align*}
since the $g$-contributions cancel out for each $g\in G$. Thus $\wt{\kappa}(\la)=\kappa(\la)$. Writing $v_{min}=F^{\Imax}v_0$, it is easy to see that $\wt{\kappa}(\la)$ is also the coefficient of $v_0$ in $E^{\Imax}F^{\Imax}v_0$ as desired.
\end{proof}

For $\sl_2$ one has $E^{\Imax}F^{\Imax}v_{0}=E^{p-1}F^{p-1}v_0=\prod_{s=0}^{p-2}[\ov{\a}-s][p-1]!v_0$ (where $\ov{\a}=\la(H)$), explaining the formula of our previous paper \cite{LNV:fibred-sl2}.  
\medskip

From the description of $\kappa_{\La}(\la)$ in the above lemma it is easy to see that it is a Laurent polynomial in the $q^{\la_i}$ with maximal power $q^{(\la,d(\Imax))}=q^{(r'-1)(\la,2\rho)}$ or equivalently, it has maximal power $\tt^{(r'-1)\rho}$ if $t_i=q^{2\la(H_i)}$. This is because, for each $i=1,\dots,n$, $E_i(F^{\Imax}v_0)$ has coefficients of the form $a(q)q^{-\la_i}+b(q)q^{\la_i}$ for some $a(q),b(q)\in \Q(q)$ (Lemma \ref{lemma: Ei coefficients action on Va}) hence $E^{\Imax}F^{\Imax}v_0$ has coefficients of the desired form.

\begin{lemma}
\label{lemma: symmetry of kappa}
    The scalars $\kappa_{\La}(\la)$ satisfy the following symmetry property: $$\kappa_{\La}(\la)=\kappa_{\La}(-\la+(r'-1)\cdot 2\rho)$$
\end{lemma}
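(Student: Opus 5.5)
We prove the symmetry of $\kappa_{\La}(\la)$ by transporting the characterization of Lemma \ref{lemma: CGPT's lemma for cointegral} along the isomorphism $\Va^*\cong V_{-\lla+(r'-1)2\rho}$ already used in the proof of Proposition \ref{prop: symmetry of ADO}. Set $\lla^\vee=-\lla+(r'-1)2\rho$, which is typical whenever $\lla$ is; this is the same check made implicitly in Proposition \ref{prop: symmetry of ADO}. Since $\La$ is central in $\uqL$, its action on $V_{\lla^\vee}\ot V_{\lla^\vee}^*$ is, by Lemma \ref{lemma: CGPT's lemma for cointegral}, $\kappa_\La(\lla^\vee)\,\lcoev\circ\rev$. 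Fix an isomorphism $\phi:\Va^*\to V_{\lla^\vee}$. Transporting along $\phi\ot (\phi^{-1})^*$, this becomes the statement that $\La$ acts on $\Va^*\ot\Va^{**}$ by $\kappa_\La(\lla^\vee)\,\lcoev_{\Va^*}\circ\rev_{\Va^*}$. The transport is legitimate because $\lcoev$ and $\rev$ are natural, and $\La$ commutes with module maps.

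We then compare this with the action of $\La$ on $\Va\ot\Va^*$. Using the pivotal isomorphism $j_{\Va}:\Va^{**}\to\Va$, the module $\Va^*\ot\Va^{**}$ is identified with $\Va^*\ot\Va$. The plan is to use the $\uqL$-linear braiding $c_{\Va^*,\Va}$, or alternatively the cyclicity of the modified trace, to relate $\La|_{\Va^*\ot \Va}$ to $\La|_{\Va\ot\Va^*}$. Since $\La$ is central, it commutes with the braiding, so
$$\La|_{\Va^*\ot\Va}=c\circ \La|_{\Va\ot\Va^*}\circ c^{-1}=\kappa_\La(\lla)\, c\circ\lcoev_{\Va}\circ\rev_{\Va}\circ c^{-1}.$$
Standard ribbon identities give $c\circ\lcoev_{\Va}=\rcoev_{\Va}$ up to a twist, and similarly for $\rev\circ c^{-1}$. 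The twists $\theta_{\Va}^{\pm1}$ cancel, so the right-hand side equals $\kappa_\La(\lla)\,\rcoev_{\Va}\circ\lev_{\Va}$. Rewritten on $\Va^*\ot\Va^{**}$, this is exactly $\kappa_\La(\lla)\,\lcoev_{\Va^*}\circ\rev_{\Va^*}$.

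Comparing the two expressions for $\La|_{\Va^*\ot\Va^{**}}$, and using that $\lcoev\circ\rev\neq0$, gives $\kappa_\La(\lla)=\kappa_\La(\lla^\vee)$. The main obstacle is making sure that all normalizations match exactly. First, the right (co)evaluations involve the pivot $g$, and the identities $\lcoev_{V^*}=(\id\ot j^{-1})\rcoev_V$, $\rev_{V^*}=\lev_V(j\ot\id)$ must be used with the convention fixed in the paper. Second, the braiding argument needs a universal $R$-matrix acting on $\Va\ot\Va^*$, and $\Va$ itself is not in $\CC_{[0]}$. If this step gets murky, I would replace it with the explicit description in the second half of Lemma \ref{lemma: CGPT's lemma for cointegral}: $\kappa_\La(\la)$ equals the coefficient $\wt\kappa(\la)$ of $v_{min}$ in $F^{\Imax}E^{\Imax}v_{min}$. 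In the dual basis, $v'_{min}$ is the highest weight vector of $\Va^*\cong V_{\lla^\vee}$. I would then apply the same trace computation with the roles of $\La_{(1)}$ and $\La_{(2)}$ swapped. Equivalently, I would use the automorphism $\o$ together with $\Va^*\cong\o(\Va)$ to show directly that this coefficient equals the coefficient of $v_0$ in $E^{\Imax}F^{\Imax}v_0$ computed in $V_{\lla^\vee}$.
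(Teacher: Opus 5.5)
Your main argument is correct and is essentially the paper's proof: transport the cointegral lemma along $\Va^*\cong V_{-\lla+(r'-1)2\rho}$, then use that $\La$ commutes with the braiding $c_{\Va,\Va^*}$ plus a ribbon manipulation (the paper takes right traces and uses an isotopy, while you cancel the twists $\theta_{\Va}^{\pm1}$ directly). Your worry about needing an $R$-matrix on $\Va\ot\Va^*$ is unfounded, so the fallback is unnecessary: $c_{\Va,\Va^*}$ is a $\UHr$-module map between objects of $\CC_{[0]}$, hence a $\uqL$-module map, and any module map commutes with the action of $\La$ (centrality is not even needed), which is exactly what the paper uses.
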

\begin{proof}
Write $\kappa(\la)=\kappa_{\La}(\la)$. Using that $\Vla^*\cong V_{-\la+(r'-1)\cdot 2\rho}$ and the previous lemma, we get that %$$\La|_{X'}=\kappa(-\la+(r'-1)\cdot 2\rho)\cdot\rcoev_{\Va}\circ \lev_{\Va}$$
      \begin{figure}[H]
    \includegraphics[width=6cm]{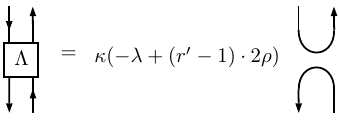}
\end{figure}
   By the definition of $\kappa(\la)$ it thus suffices to prove that 
\begin{figure}[H]
    \includegraphics[width=2.7cm]{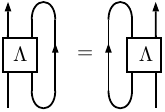}
\end{figure}

To prove this, first note that:
    \begin{figure}[H]
    \includegraphics[width=2.3cm]{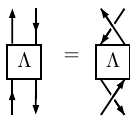}
\end{figure}
This follows simply because the braiding is a morphism of $\Uqg$-modules. Taking right traces we obtain
  \begin{figure}[H]
    \includegraphics[width=5.2cm]{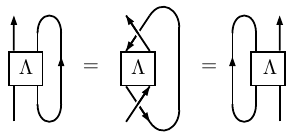}
\end{figure}
where the second equality follows from a simple isotopy. This proves the lemma.
\end{proof}

\subsection{Lemmas on the action of $F^{\Imax}E^{\Imax}$} Recall that $\Imax:\De^+\to \{0,\dots,r'-1\}$ is defined by $\Imax(\b)=r'-1$ for all $\b\in\De^+$. The top coefficient of ADO will be controlled by this top PBW vector, from which we need a couple lemmas about its action on the module $X$. The following generalizes \cite[Lemma 4.1]{LNV:fibred-sl2}.

\begin{lemma}
Suppose $d(I)=\sum_{i=1}^n a_i\a_i$ and $d(J)=\sum_{i=1}^n b_i\a_i$. Then
    \begin{itemize}
        \item $E^{\Im}(v_I\ot v'_J)=0$ if $a_i<b_i$ for some $i$.
        \item $F^{\Im}(v_I\ot v'_J)=0$ if $a_i>b_i$ for some $i$.
    \end{itemize}
\end{lemma}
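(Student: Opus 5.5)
The plan is a weight argument on $X=\Va\ot\Va^*$. The basis vector $v_I\ot v'_J$ has weight $d(J)-d(I)=\sum_i (b_i-a_i)\a_i$. The element $E^{\Imax}$ raises weight by $d(\Imax)=(r'-1)2\rho$, so $E^{\Imax}(v_I\ot v'_J)$ lies in the weight space $X(d(J)-d(I)+d(\Imax))$. I will show this weight space is zero whenever $a_i<b_i$ for some $i$. The statement for $F^{\Imax}$ should then follow from the same argument, either by symmetry or by repeating it.

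First I list the weights of $X$. They are exactly $d(J')-d(I')$ with $I',J':\Dep\to\{0,\dots,r'-1\}$. Each $d(I')$ satisfies $0\le d(I')\le d(\Imax)$ componentwise in the simple-root coordinates, and $d(\Imax)=\sum_i m_i\a_i$ with $m_i=(r'-1)k_i$, where $2\rho=\sum k_i\a_i$. Hence every weight $\sum c_i\a_i$ of $X$ satisfies $-m_i\le c_i\le m_i$ for all $i$.

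Now take the target weight $d(J)-d(I)+d(\Imax)$. Its $i$-th coordinate is $b_i-a_i+m_i$, which exceeds $m_i$ exactly when $a_i<b_i$. In that case the target weight is not a weight of $X$, so $E^{\Imax}(v_I\ot v'_J)=0$. Similarly, $F^{\Imax}(v_I\ot v'_J)$ has $i$-th coordinate $b_i-a_i-m_i$, which is less than $-m_i$ when $a_i>b_i$, so it vanishes. This only uses that $E^{\Imax}$ and $F^{\Imax}$ are homogeneous of degree $\pm d(\Imax)$, and that $X$ is a weight module with the weight-space decomposition coming from the standard basis. Both facts were established earlier, from $d(E^I)=d(I)$ and the weights $\lla-d(I)$ of $v_I$.

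The only step that needs care is making sure the coordinate bound is tight and correctly stated. The weights of $\Va$ run over $\lla-d(I')$ with $0\le d(I')\le d(\Imax)$ coordinatewise, and the weights of $\Va^*$ are their negatives. Even if some weight vectors in between vanish, we only need the containment of the set of weights of $X$ in the box $[-m_i,m_i]$, so the argument is unaffected. I expect no real obstacle beyond this bookkeeping.
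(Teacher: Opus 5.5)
Your proposal is correct and is essentially the paper's proof. Both note that $E^{\Imax}(v_I\ot v'_J)$ has weight $d(J)-d(I)+d(\Imax)$, whose $\a_i$-coordinate exceeds the largest possible one among weights of $X$ when $a_i<b_i$, so the vector is zero; the $F^{\Imax}$ case is handled the same way. Your box bound $[-m_i,m_i]$ on the weight coordinates of $X$ simply writes out the step the paper leaves implicit.
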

\begin{proof}
    The vector $v_I\ot v'_J$ has weight $-d(I)+d(J)$ so $E^{\Im}(v_I\ot v'_J)$ has weight $-d(I)+d(J)+d(\Im)$. Since $-a_i+b_i>0$, the $\a_i$-component of this vector is greater than that of $d(\Im)$. Since there is no non-zero vector of such weight in $\Vla\ot \Vla^*$, this vector must be zero. The second statement is proved similarly.
\end{proof}

\begin{lemma}
\label{lemma: commuting F^I with E^I}
For all $f\in U^-, e\in U^+$ one has
    $$[f,e]= \text{ sum of terms with lower } L_R-\text{degree than } e.$$
\end{lemma}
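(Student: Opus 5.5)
The plan is to reduce to generators and argue by induction, using a grading argument to locate the commutator. Interpret the statement as follows: for homogeneous $f\in U^-$ of degree $-\c$ and $e\in U^+$ of degree $\c'$, the commutator $[f,e]=fe-ef$ can be written, via the triangular decomposition $U^-\ot U^0\ot U^+$ (or $U^+U^0U^-$), as a sum of products $f_k\,h_k\,e_k$ with $f_k\in U^-$, $h_k\in U^0$ and $e_k\in U^+$ homogeneous, where every $e_k$ satisfies $d(e_k)<d(e)=\c'$ in the $L_R$-order. By linearity, it suffices to treat monomials $f=F_{j_1}\cdots F_{j_m}$ and $e=E_{i_1}\cdots E_{i_\ell}$, since the PBW vectors of Proposition \ref{lemma: PBW belongs to V+} lie in $U^\pm$ and are linear combinations of such monomials.

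First handle a single $F_j$ against $e=E_{i_1}\cdots E_{i_\ell}$. Use the derivation rule
$$[F_j,e]=\sum_{k}E_{i_1}\cdots E_{i_{k-1}}[F_j,E_{i_k}]E_{i_{k+1}}\cdots E_{i_\ell},$$
together with $[F_j,E_{i_k}]=-\d_{j,i_k}\frac{K_j-K_j^{-1}}{q-q^{-1}}$. Each term contains one fewer $E_{i_k}$, with $i_k=j$. Moving the Cartan factor to the left past the remaining $E$'s only multiplies by powers of $q$. The result is therefore a sum of terms $h\,e'$ with $h\in U^0$ and $d(e')=\c'-\a_j<\c'$.

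Then induct on $m$, the number of $F$-factors. Write $f=F_jf'$, so that $[f,e]=F_j[f',e]+[F_j,e]f'$. By the inductive hypothesis, $[f',e]$ is a sum of terms $f_k h_k e_k$ with $d(e_k)<\c'$, and left multiplication by $F_j$ preserves this form. The second summand is a sum of terms $h e' f'$ with $d(e')<\c'$. To put it in $U^-U^0U^+$ order, write $e'f'=f'e'+[e',f']$. By induction (on total length), $[e',f']$ is a sum of terms whose $U^+$-parts have degree $<d(e')<\c'$. Commuting $h$ past $f'$ again only produces scalars.

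The main point of care is making the induction well-founded. I would induct on the pair (length of $f$, length of $e$), since each commutator strictly lowers the length of $e$. Orientation also needs attention: lowering the $U^+$-degree is the relevant notion for later applications to $F^{\Imax}E^{\Imax}$ versus $E^{\Imax}F^{\Imax}$. The restricted quotient relations $E_\b^{r'}=0$ cause no problem, because everything holds already in $\UHqg$ and passes to the quotient. Finally, by weight considerations the $U^-$-parts correspondingly have degree above $-\c$, so both parts drop together, which is what will let these terms be discarded as lower order.
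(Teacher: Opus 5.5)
Your proposal is correct and is essentially the paper's argument: both are inductions on word length driven by the Leibniz rules for commutators together with $[F_j,E_i]=-\delta_{ij}\frac{K_i-K_i^{-1}}{q-q^{-1}}$. The differences are minor. The paper first shows $[f,E_j]\in U^{\leq 0}$ by induction on $f$ and then inducts on $e$ via $[f,ee']=[f,e]e'+e[f,e']$, whereas you first treat a single $F_j$ against a word $e$ and then induct on $f$; you also make explicit the reordering into $U^-U^0U^+$ form, including the accompanying drop in $U^-$-degree, which the paper leaves implicit.
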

\begin{proof}
    Obviously true for $f=F_i,e=E_j$. Using that $$[ff',e]=f[f',e]+[f,e]f'$$
    it follows by induction that $[f,E_j]$ is in $U^{\leq 0}$ for all $f\in U^-$. Similarly, using that $$[f,ee']=[f,e]e'+e[f,e']$$
    an another induction (in which the previous case is the base case), the lemma follows.
\end{proof}

\begin{lemma}
  If $d(I)\neq d(J)$, then $F^{\Im}E^{\Im}(v_I\ot v'_J)$ is a vector of $X$ for which, when expressed in the standard basis, all coefficients are polynomials in $t^{-1}$ of degree $<(r'-1)(2\rho,\rho)$.
   
\end{lemma}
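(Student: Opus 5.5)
The plan is to refine the degree count behind Lemma \ref{lemma: algebraic bounds}, keeping track of which elementary steps actually produce a factor $t^{-1}$. By the first lemma of this subsection, $E^{\Imax}(v_I\ot v'_J)=0$ unless $a_i\geq b_i$ for all $i$. So we may assume $d(I)\geq d(J)$ componentwise, and $d(I)\neq d(J)$ means some $a_i>b_i$. Since $F^{\Imax}$ acts on $X$ with $\Q(q)$-coefficients, it is enough to show the following: every coefficient of $E^{\Imax}(v_I\ot v'_J)$ in the standard basis is a polynomial in $t^{-1}$ of degree $<|\Imax|=(r'-1)(2\rho,\rho)$.

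First I will expand $E^{\Imax}$ as an ordered word in the simple generators $E_i$, using Proposition \ref{lemma: PBW belongs to V+}; there are exactly $|\Imax|$ letters. Each letter acts on $X=\Va\ot\Va^*$ through $\De(E_i)=E_i\ot K_i+1\ot E_i$. I will track the two pieces separately.
\begin{itemize}
\item The piece $E_i\ot K_i$ has coefficient $(a\,t^{1/2}+b\,t^{-1/2})\cdot q^{\ast}t^{-1/2}=a'+b't^{-1}$ by Lemma \ref{lemma: Ei coefficients action on Va}. It raises the $\Va$-factor from some $v_K$ towards $v_0$.
\item The piece $1\ot E_i$ has coefficient $c+d\,t^{-1}$. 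It raises the $\Va^*$-factor $v'_L$ towards $v'_{min}$.
\end{itemize}
Thus each letter contributes $t^{-1}$-degree at most $1$. A coefficient of degree exactly $|\Imax|$ can only come from words in which every letter contributes its $t^{-1}$-part.

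Next I will bound how many letters of each kind can occur. On the $\Va$-side the weight can be raised by at most $d(I)$ before reaching $v_0$, so at most $|I|$ letters act there. On the $\Va^*$-side the weight can be raised by at most $d(\Imax)-d(J)$, so at most $|\Imax|-|J|$ letters act there. Now look at a letter on the $\Va$-side whose $t^{-1}$-part is nonzero. In the computation of $E_iF^Kv_0$ via Lemma \ref{lemma: commuting F^I with E^I}, the $t^{\pm1/2}$ comes from the factor $(K_i-K_i^{-1})/(q-q^{-1})$ hitting a weight vector. Its $t^{-1/2}$-part is the $K_i^{-1}$ term, and this gets multiplied by the $t^{-1/2}$ from $K_i$ acting on the $\Va^*$ factor. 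I expect to show that a product of such "all $t^{-1}$" contributions depends on the $\Va^*$-weight in a way that forces the $\Va^*$-factor to end at $v'_{min}$ and the $\Va$-factor to end at weight $\lla-d(I)+d(\Imax)-(d(\Imax)-d(J))$. Matching the total weight $d(J)-d(I)+d(\Imax)$ with the final state of the $\Va$-factor then forces $d(I)=d(J)$.

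Concretely, I would formalize this as a leading-order operator. Set $E_i^{\top}$ to be the coefficient of $t^{-1}$ in the action of $E_i$ on $X$, and show that $E^{\Imax}$ has $t^{-|\Imax|}$-coefficient $(E^{\top})^{\Imax}$. This operator commutes with the $L_R$-weight grading and is independent of $\la$. I would then check directly on the basis that $(E^{\top})^{\Imax}$ annihilates every $v_I\ot v'_J$ with $d(I)\neq d(J)$.

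The main obstacle is this last step: identifying $E_i^{\top}$ explicitly enough to see the vanishing. The $t^{-1/2}$-part of $E_i$ on $\Va$ is not simply a scaled $E_i$. My first attempt would be to use the limit $t\to\infty$ along $t^{-1}$, where $\Va$ degenerates to a module with the $K^{-1}$-part of $[E_i,F_j]$ surviving. In that limit I would try to show $E^{\top}$ is the action of a Borel-type (lowering-free) algebra in which the two factors are coupled diagonally, so that a full-length word is nonzero only from weight $0$. If this gets unwieldy, the fallback is a direct induction on $|I|-|J|$ using the counting bounds $|A|\leq|I|$, $|B|\leq|\Imax|-|J|$, together with the observation that letters landing on $v_0$ or on $v'_{min}$ contribute only their constant parts.
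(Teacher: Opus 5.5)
Your opening reduction is logically valid, but the statement you reduce to is false, so the plan cannot be completed. When $d(I)\geq d(J)$ componentwise and $d(I)\neq d(J)$, the coefficients of $E^{\Imax}(v_I\ot v'_J)$ can have $t^{-1}$-degree exactly $|\Imax|$. The top part is only killed afterwards, by $F^{\Imax}$.

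The smallest example is $\mathfrak{sl}_2$ with $r=4$, so $r'=2$, $q=i$, $E^{\Imax}=E$ and $|\Imax|=1$. Take $v_1\ot v'_0$, so that $d(I)=\a\neq 0=d(J)$. We have $Ev_1=[\la]v_0$ and $Kv'_0=q^{-\la}v'_0=t^{-1/2}v'_0$. Hence the coefficient of $v_0\ot v'_0$ in $E(v_1\ot v'_0)=Ev_1\ot Kv'_0+v_1\ot Ev'_0$ is $[\la]t^{-1/2}=(1-t^{-1})/(q-q^{-1})$, which has degree $1=|\Imax|$. The same happens for $r=6$: the $t^{-2}$-part of $E^2(v_1\ot v'_0)$ has a nonzero $v_0\ot v'_1$-component.

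So your operator $(E^{\top})^{\Imax}$ does not annihilate such vectors. The example also refutes two of your intermediate claims:
\begin{itemize}
\item the heuristic that all-$t^{-1}$ words push the $\Va^*$-factor to $v'_{min}$ (here the $\Va^*$-factor stays at $v'_0$);
\item the fallback claim that a letter landing on $v_0$ contributes only its constant part (it is exactly that letter which carries the $t^{-1}$).
\end{itemize}
Yet $FE(v_1\ot v'_0)=0$ in this example. The lemma holds only because of a cancellation produced by $F^{\Imax}$, and you threw that away by treating $F^{\Imax}$ as an arbitrary $\Q(q)$-linear map.

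The missing idea is to use $F^{\Imax}$ through weights rather than discard it. In your remaining case, where some $a_i>b_i$, the second half of the preceding lemma gives $F^{\Imax}(v_I\ot v'_J)=0$: the $\a_i$-component of the weight would drop below that of the lowest weight $-d(\Imax)$ of $X$. Now write $F^{\Imax}E^{\Imax}=E^{\Imax}F^{\Imax}+[F^{\Imax},E^{\Imax}]$. By the commutation lemma ($[f,e]$ has lower $L_R$-degree than $e$), the commutator is a $\Q(q)$-combination of PBW monomials $K^AF^BE^C$ with $d(C)<d(\Imax)$, hence $|C|<|\Imax|$. Lemma \ref{lemma: algebraic bounds} then bounds the $t^{-1}$-degree of every coefficient by $|C|<(r'-1)(2\rho,\rho)$. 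This is the paper's argument, and it needs no analysis of leading-order operators.
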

\begin{proof}
    If for some $i$, the $\a_i$-component of $d(I)$ is $<$ than that of $d(J)$, then $F^{\Im}E^{\Im}(v_I\ot v'_J)=0$ by the first part of the previous lemma. If on the other hand, some $\a_i$-component of $d(I)$ is $>$ than that of $d(J)$, then one can commute $$F^{\Im}E^{\Im}=E^{\Im}F^{\Im}+\sum_{A,B,C}c_{A,B,C}K^AF^BE^C$$
    with $d(B),d(C)<d(\Im)$ and $c_{A,B,C}\in \Q(q)$ by Lemma \ref{lemma: commuting F^I with E^I}. But then $$F^{\Im}E^{\Im}(v_I\ot v'_J)=\sum_{A,B,C}c_{A,B,C}K^AF^BE^C(v_I\ot v'_J)$$
    by the second part of the previous lemma. Finally, since $d(C)<d(\Im)=(r'-1)2\rho$ for each $C$ on the RHS, we get the desired result by Proposition \ref{lemma: algebraic bounds}.
\end{proof}

Recall that we choose the cointegral $\La=\La_G F^{I_{max}}E^{I_{max}}=F^{I_{max}}E^{I_{max}}\La_G$ where $\La_G=\frac{1}{|G|}\sum_{x\in G}x$ and $G$ is the group of group-likes of $\uqL$.

\begin{lemma}
\label{lemma: change FEmax by cointegral}
    As operators of $X=\Vla\ot \Vla^*$ one has $$F^{I_{max}}E^{I_{max}}|_{X}=\La|_{X} + B$$
    where $B$ is an operator which, when expressed as a matrix in the standard basis of $X$, all coefficients are polynomials in $t^{-1}$ of degree $<(r'-1)(2\rho,\rho)$.
\end{lemma}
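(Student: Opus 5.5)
We compare $F^{\Imax}E^{\Imax}$ and $\La=\La_G F^{\Imax}E^{\Imax}$ column by column in the standard basis $\{v_I\ot v'_J\}$ of $X$. The difference between the two operators comes only from the averaging operator $\La_G=\frac{1}{|G|}\sum_{x\in G}x$. Every group-like $x=K_{\c}$ acts on a weight vector of weight $\mu\in L_R$ by the scalar $q^{(\c,\mu)}$, and all weights of $X$ lie in $L_R$. Hence $\La_G$ acts on each weight space $X(\mu)$ by the scalar $\frac{1}{|G|}\sum_{\c\in L}q^{(\c,\mu)}$. By character orthogonality this scalar is $1$ if $\mu$ pairs trivially with $L$, and $0$ otherwise; in particular it is $1$ when $\mu=0$. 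Since $F^{\Imax}E^{\Imax}$ has $L_R$-degree zero, it preserves weights. Therefore on a vector $v_I\ot v'_J$ of weight $d(J)-d(I)$ we get
$$\La(v_I\ot v'_J)=c_{I,J}\,F^{\Imax}E^{\Imax}(v_I\ot v'_J),\qquad c_{I,J}\in\{0,1\},$$
with $c_{I,J}=1$ whenever $d(I)=d(J)$.

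We then set $B=F^{\Imax}E^{\Imax}|_X-\La|_X$ and check its columns in two cases. If $d(I)=d(J)$, the vector $v_I\ot v'_J$ has weight $0$, so $c_{I,J}=1$ and $B(v_I\ot v'_J)=0$. If $d(I)\neq d(J)$, then $B(v_I\ot v'_J)=(1-c_{I,J})F^{\Imax}E^{\Imax}(v_I\ot v'_J)$, which is either $0$ or $F^{\Imax}E^{\Imax}(v_I\ot v'_J)$. By the preceding lemma, the latter has all standard-basis coefficients polynomial in $t^{-1}$ of degree $<(r'-1)(2\rho,\rho)$. Thus every column of $B$ has the required form, and the claim follows.

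The main point requiring care is the first step: justifying that $\La_G$ acts as the identity on the zero weight space. This uses the precise definition of $\La_G$ as the normalized sum over the whole group $G$ of group-likes of $\uqL$, which consists of the $K_{\c}$ with $\c\in L$, together with the identification in the forgetful functor $F:\CC_{[0]}\to\Rep(\uqL)$ that $K_{\c}$ acts by $q^{(\c,\mu)}$ on weight $\mu$. For $\mu=0$ every group-like acts as $1$, so the average is $1$. For $\mu\neq 0$ we do not need the exact value of the scalar. We also use that $\La_G$ commutes with $F^{\Imax}E^{\Imax}$, so the order in which $\La$ is written does not matter. This commutation holds because $F^{\Imax}E^{\Imax}$ has degree zero, so conjugation by any $K_{\c}$ fixes it.
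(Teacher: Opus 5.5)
Your proof is correct. It uses the same two-case split as the paper, and the weight-zero case $d(I)=d(J)$ is handled identically: $\La_G$ acts as the identity there, so $B$ vanishes on those columns.

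The difference is in the case $d(I)\neq d(J)$.
\begin{itemize}
\item The paper invokes Lemma~\ref{lemma: CGPT's lemma for cointegral}, i.e.\ $\La|_X=\kappa_{\La}(\la)\,\lcoev_{\Va}\circ\rev_{\Va}$. This gives $\La(v_I\ot v'_J)=0$, and hence $B(v_I\ot v'_J)=F^{\Imax}E^{\Imax}(v_I\ot v'_J)$ exactly.
\item You only use that $\La_G$ acts on each weight space $X(\mu)$ by a scalar that does not depend on $t$. This holds because the weights of $X$ lie in $L_R$, so the $\lla$-dependence cancels. As you note yourself, the exact value in $\{0,1\}$ from character orthogonality is more than you need.
\end{itemize}

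Your route is more elementary and self-contained. It uses neither projectivity of $X$, nor the pairing of \cite{CGPT}, nor even the fact that $\La_G F^{\Imax}E^{\Imax}$ is a cointegral; it is a statement purely about the averaging operator. The paper's route gives the sharper off-diagonal identity. Since the cutting lemma is needed anyway in Step~3 of Proposition~\ref{prop: prop surface and coevaluations}, little is gained or lost either way.

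One small slip: the average should run over the finite group $G$, i.e.\ over $L$ modulo the relations $K_{\b}^r=1$, not over all $\c\in L$.
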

\begin{proof}
Let $B:X\to X$ be the action of $F^{\Imax}E^{\Imax}-\La$, we will check that $B(v_I\ot v'_J)$ has coefficients of the desired degree. Indeed, if $d(I)\neq d(J)$, $F^{I_{max}}E^{I_{max}}(v_I\ot v'_J)$ has coefficients of the desired degree by the previous lemma while $\La(v_I\ot v'_J)=0$ if $I\neq J$ (in particular if $d(I)\neq d(J)$) by Lemma \ref{lemma: CGPT's lemma for cointegral}, hence the same holds for $B(v_I\ot v'_J)$. On the other hand, $\frac{1}{|G|}\sum_AK^A$ acts as the identity on weight zero vectors, hence if $d(I)=d(J)$ one has $$\La(v_I\ot v_J)=F^{I_{max}}E^{I_{max}}(v_I\ot v_J)$$
so that $B(v_I\ot v_J)=0$ in such a case. These two cases imply the lemma.
\end{proof}

\section{Proof of main theorems}
\label{sect: proofs main thms}
\def\FC{F_{\CCgr}}
\def\TS{T_{\S}}

\def\FF{\overline{F}}

\def\JJ{\mathcal{J}_X}
\def\XX{X^{\ov{\e}}}
\subsection{Seifert surface formula}\label{subs: Seifert formula}
Let $L$ be an unframed, oriented link and let $\S$ be a connected Seifert surface for $L$. Then $\S$ can be obtained from a single disk by attaching, say, $N$ 1-handles. After an isotopy, we can suppose $\S$ is obtained by taking an $N$-component bottom tangle $\TS$\footnote{This is almost the terminology of Habiro \cite{Habiro:bottom-tangles}, except that we do not require any condition on the placement of the endpoints of the components of $T$.}, i.e. a framed tangle in $\R^2\t[0,1]$ with all endpoints on $\R^2\t\{1\}$, and then attaching a disk on top, see Figure \ref{figure:Seifert}. We will consider $L$ as a framed link with the framing coming from $\S$, thus $w(L)=0$ and $\ADOgr(L)=\ADOgr'(L)$. Let $L_o$ be the framed $(1,1)$-tangle obtained by opening $L$ at $p$ as indicated in the same figure.
\begin{figure}[H]
    \centering
    \includegraphics[width=8.5cm]{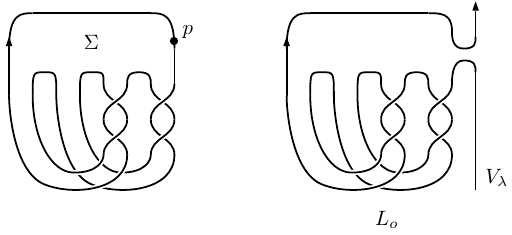}
    \label{figure:Seifert}
    \caption{On the left is the Seifert surface $\S$ of $L$. On the right, we open $L=\p\S$ at $p$ to obtain a $(1,1)$-tangle $L_o$. This is colored with $\Vla$ so the figure has to be interpreted as the RT invariant $F(L_o,\Vla):\Vla\to \Vla$.}
\end{figure}

\subsubsection{$F(L_o,\Vla)$ from $F(\TS,X)$} We will express the invariant $F(L_o,\Vla)$ (and hence $\ADOgr(L)$) in terms of $F(\TS,X)$ as follows. Suppose each component of $\TS$ is oriented to the right, that is, for any component $T_i$ of $\TS$ the orientation of $T_i$ is downwards at the leftmost endpoint of $T_i$ on $\R^2\t\{1\}$. Let $\e_1,\dots,\e_{2N} \in \{\pm 1\}$ be the signs at the endpoints of $\TS$ from left to right, where $\e_i=+1$ (resp. $\e_i=-1$) indicates upwards orientation  (resp. downwards). Let $X^{+1}=X$ and $X^{-1}=X^*$ and denote $\XX=X^{\e_1}\ot\dots\ot X^{\e_{2N}}$. Then the Reshetikhin-Turaev invariant of $\TS$ colored with $X$ has the form $$F(\TS,X):\C\to  \XX=X^{\e_1}\ot\dots\ot X^{\e_{2N}}.$$
Let  $\JJ:\XX\to X^{\ot 2N}$ be the isomorphism that applies $\iota_X$ to each $X^*$-tensor factor and $\id_X$ on each $X$-tensor factor. Let 
\begin{align}
    \label{eq: def of F bar}
    \FF(\TS,X)=\JJ\circ F(\TS,X):\C\to X^{\ot 2N}
\end{align}

 Then the invariant $F(L_o,\Vla):\Vla\to \Vla$ is computed from $\FF(\TS,X)$ by
$$F(L_o,\Vla)=(\id_{\Vla}\ot \lev_{\Vla}^{\ot 2N})\circ(\FF(\TS,X)\ot\id_{\Vla}).$$
Here is a picture in the case of Figure \ref{figure:Seifert} (where $N=2$):
\begin{figure}[H]
    \centering
    \includegraphics[width=10cm]{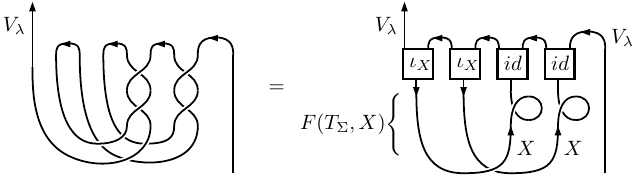}
\end{figure}
\def\ZZ{Z^{ev}}

\subsubsection{$F(\TS,X)$ from universal invariants} %Suppose for a while that $(r,\det(A))=1$ so that $\uqg$ ribbon (or that $\gg=\sl_2$ and $\uqLW(\sl_2)$ is used in what follows). 
Let $L$ be a lattice between $L_R$ and $L_W$ as in Lemma \ref{lemma: braiding on cXX expressed as R-matrix on some small uqgL}. Since $X,X^*\in \CC_{[0]}$ and the invariant $F(\TS,X)$ is built by composing/tensoring $c_{X,X},c_{X,X^*},c_{X^*,X},c_{X^*,X^*}$'s and (co)evaluations, which can all be expressed as left multiplication by elements in $\uqL$ by Lemma \ref{lemma: braiding on cXX expressed as R-matrix on some small uqgL}, it follows that  
there is an element $$Z_{T}\in \uqL^{\ot N}$$ that determines $F(\TS,X)$ by left multiplication on $X^{\ot N}$ (as explained below). Whenever $\uqL$ is ribbon in such a way that the forgetful functor $F:\CC_{[0]}\to \Rep(\uqL)$ is ribbon (e.g. if $(r,\det(A))=1$ or if $\gg=\sl_2$ and $L=L_W$), this element is the universal $\uqL$-invariant of the bottom tangle $\TS$ as in \cite{Habiro:bottom-tangles} or \cite[Chapter 4]{Ohtsuki:BOOK}.

\medskip

The element $Z_T$ determines $F(\TS,X)$ as follows: let $\ZZ_T\in \uqL^{\ot 2N}$ be the element obtained from $Z_T$ by inserting the unit $1_{\uqL}$ in all odd tensor factors. Let $L_{\ZZ_T}$ be the endomorphism of $(X^*\ot X)^{\ot N}$ that is left multiplication by $\ZZ_T$, so the units $1_{\uqL}$ in $Z_T^{ev}$ act on the $X^*$-factors and each tensor factor of $Z_T$ acts on the corresponding $X$-factor. Then
\begin{align*}
    F(\TS,X)= P_{X,X^*}\circ L_{\ZZ_T}\circ  \coev_X^{\ot N}
\end{align*}

\noindent where $\coev_X$ denotes the usual right coevaluation of vector spaces. Here $P_{X,X^*}$ is a permutation isomorphism (of $X$ and $X^*$ tensor factors) determined by the ordering of the components of $T$ and the way the endpoints of $T$ lie in $\R^2\t\{1\}$. Usually, in the above formula both $P$ and $\coev_X$ are those of vector spaces, so there is no $R$-matrix involved in $P_{X,X^*}$ or pivot in $\coev_X$ as all these Hopf algebra elements were already included in $Z_T$. However, we prefer to have $\rcoev_X$ in the bottom, so instead we will write

\begin{align}
\label{eq: RT of T in terms of universal}
    F(\TS,X)&= P\circ L_{(Z'_T)^{ev}}\circ  \rcoev_X^{\ot N}
\end{align}

\noindent where $Z'_T=Z_T\cdot g^{\ot N}\in \uqL^{\ot N}$, $g=K_{2\rho}^{1-r'}$, $(Z'_T)^{ev}$ is obtained from $Z'_T$ by inserting the unit in odd tensor factors and $L$ is left multiplication. Note that $\JJ\circ P_{X,X^*}=P_{X,X}\circ (\iota_X\ot \id_X)^{\ot N}$ where $P_{X,X}$ is the same permutation but applied to $X^{\ot N}$ so that
$\FF(\TS,X)$ we can be written as $$\FF(\TS,X)=P_{X,X}\circ (\iota_X\ot \id_X)^{\ot N}\circ L_{(Z'_T)^{ev}}\circ \rcoev^{\ot N}$$
or graphically 
\begin{figure}[H]
    \centering
    \includegraphics[width=8cm]{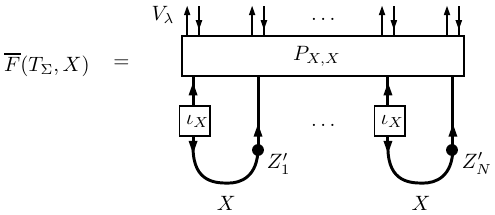}
\end{figure}
\noindent where we write $Z'_T=\sum Z'_1\ot\dots\ot Z'_N$. For the previous example:

\begin{figure}[H]
    \centering
    \includegraphics[width=10cm]{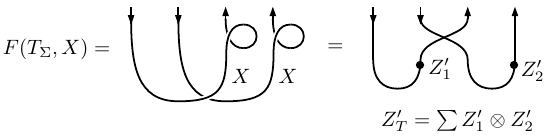}
\end{figure}

\medskip
To summarize this discussion:

\begin{proposition}
\label{prop: Seifert formula}
    If $\S$ is a connected Seifert surface of a link $L$ built from an $N$-component tangle $\TS$ as above, then the $\ADOgr$ invariant of $L$ is determined by $\TS$ as
    $$\ADOgr(L)\cdot\id_{\Vla}=(\id_{\Vla}\ot \lev_{\Vla}^{\ot 2N})\circ(\FF(\TS,X)\ot\id_{\Vla})$$
    where $\FF(\TS,X)=\JJ\circ F(\TS,X)$ is determined by left multiplication of the universal invariant $Z'_T$ acting on $X^{\ot N}$ as in (\ref{eq: RT of T in terms of universal}). 
\end{proposition}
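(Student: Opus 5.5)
This is essentially an assembly of what has already been set up in Subsection \ref{subs: Seifert formula}, so the plan is to verify the two displayed identities in turn and then combine them.

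\emph{First identity.} We first check
$$F(L_o,\Vla)=(\id_{\Vla}\ot \lev_{\Vla}^{\ot 2N})\circ(\FF(\TS,X)\ot\id_{\Vla}).$$
Take $L$ with the Seifert framing, so $w(L)=0$. Then $\ADOgr(L)$ is exactly the scalar $\lb L_o\rb$, with no framing correction.

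Next, cut $L_o$ along the disk on top of $\TS$. Each band of $\S$ contributes two parallel, oppositely oriented strands of $\partial\S$. So the band cores colored by $X=\Vla\ot\Vla^*$ (or by $X^*$ where the orientation points down) carry precisely the boundary strands of $L_o$, by the cabling property of Reshetikhin--Turaev invariants: a strand colored by $V\ot W$ equals two parallel strands colored by $V$ and $W$, for blackboard-framed parallels.

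The top disk with its boundary arc then becomes a sequence of caps connecting adjacent endpoints, with one through-strand coming from the cut point $p$. At endpoints with sign $\e_i=-1$ the color is $X^*\cong \Vla^{**}\ot\Vla^*$. Inserting $\iota_X$ there, through the pivotal isomorphism $j_{\Vla}$, converts those right caps into left evaluations $\lev_{\Vla}$. This is precisely why $\JJ$ appears in the definition (\ref{eq: def of F bar}) of $\FF$. The result is the first displayed identity.

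\emph{Second identity.} We then verify the universal-invariant formula (\ref{eq: RT of T in terms of universal}). By Lemma \ref{lemma: braiding on cXX expressed as R-matrix on some small uqgL}, every braiding among $X$ and $X^*$ is left multiplication by $\ov{\KK}_L\TT\in\uqL^{\ot2}$ followed by a flip. The pivotal (co)evaluations differ from those of vector spaces by the pivot $g=K_{2\rho}^{1-r'}\in\uqL$.

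Choose a diagram of $\TS$ that is a composite of elementary pieces: crossings, caps, cups, and twists. Then slide all Hopf-algebra elements along each component to a base point near its leftmost endpoint, using the standard rules of Habiro's and Ohtsuki's construction: the antipode appears when passing through a cap, and the elements are multiplied in order. Collecting them gives $Z_T\in\uqL^{\ot N}$. Replacing the vector-space coevaluation by $\rcoev_X$ absorbs one pivot per component, which yields $Z'_T=Z_T g^{\ot N}$. Combining with the previous paragraph gives the proposition.

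\emph{Main obstacle.} The hardest step is the bookkeeping in this last paragraph. Since $\uqL$ need not be ribbon, one cannot simply quote Habiro's theorem. Instead, one must check that the sliding moves only use that each elementary morphism on objects of $\CC_{[0]}$ is a $\uqL$-action. This holds because morphisms in $\CC_{[0]}$ are $\uqL$-linear, so elements can be pushed through (co)evaluations. Invariance of the assembled element is not needed, since we only claim the equality for the fixed diagram.
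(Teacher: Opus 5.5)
Your proposal is correct and follows the paper's own argument. The Seifert framing kills the writhe. Cabling the band cores by $X$ and applying $\mathcal{J}_X$ (that is, $j_{\Vla}$ on the $\Vla^{**}$ factors) turns the top arcs of the disk into $\lev_{\Vla}$'s. Lemma~\ref{lemma: braiding on cXX expressed as R-matrix on some small uqgL}, together with the pivot $g=K_{2\rho}^{1-r'}\in\mathfrak{u}_q^{L}(\mathfrak{g})$, lets all crossings and caps be collected as beads into $Z_T$, and trading vector-space coevaluations for $\rcoev_X$ gives $Z'_T=Z_T\, g^{\ot N}$. Your remark that only the $\mathfrak{u}_q^{L}(\mathfrak{g})$-action on objects of $\mathcal{C}_{[0]}$ is needed, and not ribbonness or Habiro's invariance theorem, matches the paper's own caveat.
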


\subsection{The main proposition and proof of Theorem \ref{theorem: top coefficient formula with integral}}

\def\maxdeg{\text{max-deg}}

Proposition \ref{prop: prop surface and coevaluations} below is the main ingredient behind the proof of our theorems. %In what follows we will have various expressions with coefficients that are Laurent polynomials in $\tt$ with powers in the root lattice $L_R$. Some coefficients will actually be polynomials in $t^{-1}$ (i.e. no positive powers of $t$) so we will always distinguish clearly between polynomials and Laurent polynomials. Recall that we denote $\tt^{\a}=t_1^{a_1}\dots t_n^{a_n}$ if $\a=\sum_{i=1}^na_i\a_i\in L_R$. 
By $\maxdeg_{t^{-1}}$ we refer to the maximal degree in $t^{-1}$ of a Laurent polynomial in $t$, for instance $\maxdeg_{t^{-1}}(t^{-2}+t)=2$. Similarly $\maxdeg_{t}(t^{-2}+t)=1$

\medskip
Recall that we set $\ov{\mu}(x):=\mu(\La_Gx)$ for all $x\in \uqL$ where $\mu:\uqL\to\C$ is the right integral normalized by $\mu(\La)=1$ where $\La=\La_GF^{\Imax}E^{\Imax}.$
\medskip

Finally, following the notation of the previous section, we will denote
\begin{align}
\label{eq: def of F(Z,X)}
    F(Z,X)&= P\circ L_{Z^{ev}}\circ  \rcoev_X^{\ot N}
\end{align}
for any $Z\in \Uq^{\ot N}$. Thus $F(\TS,X)=F(Z'_T,X)$, but note that $F(Z,X)$ is not a Reshetikhin-Turaev invariant in general. We will employ similar notation for $\FF(Z,X)$.

\begin{proposition}
\label{prop: prop surface and coevaluations}
    Let $\TS$ be an $N$-component framed, oriented bottom tangle. Then the Reshetikhin-Turaev operator invariant $\FF(\TS,X):\C\to X^{\ot 2N}$ satisfies
\begin{align}
\label{eq prop: FTX in terms of coev + B}
     \FF(\TS,X)=c(q,q^{\la})\cdot \lcoev_{\Va}^{\ot 2N}+B
\end{align}
\noindent or graphically
 \begin{figure}[H]
        \centering
        \includegraphics[width=13cm]{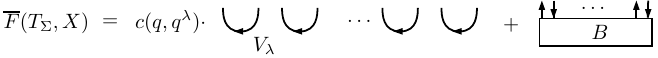}
    \end{figure}
   \noindent for some coefficient $c(q,q^{\la})\in \Q(q,q^{\pm\la})$ and where $B:\C\to X^{\ot 2N}$ has the following property: the coefficients of $B(1)\in X^{\ot 2N}$ in the standard basis are Laurent polynomials in $t^{-1}$ with $$\maxdeg_{t^{-1}}<(r'-1)(\rho,\rho) \cdot N \ \text{ and } \ \maxdeg_{t}\leq (r'-1)(\rho,\rho)\cdot N.$$ Moreover, the coefficient $c(q,q^{\la})$ is explicitly given by $$c(q,q^{\la})=\kappa(\la)^N\cdot\ov{\mu}^{\ot N}(Z_T)$$
        where $\kappa(\la)$ is as in Lemma \ref{lemma: CGPT's lemma for cointegral} and $Z_T\in\uqL^{\ot N}$ is the element defined in Subsection \ref{subs: Seifert formula}.
  
\end{proposition}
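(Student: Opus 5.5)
We follow the outline sketched in the introduction. Start from (\ref{eq: RT of T in terms of universal}), so that $\FF(\TS,X)=P_{X,X}\circ(\iota_X\ot\id_X)^{\ot N}\circ L_{(Z'_T)^{ev}}\circ\rcoev_X^{\ot N}$. Expand each tensor factor of $Z_T\in\uqL^{\ot N}$ in the PBW basis $K^AF^IE^J$ (with $K^A$ ranging over the group-likes $G$ of $\uqL$). Since $\ov{\KK}_L\cdot\TT$ and the pivot have $L_R$-degree zero, $Z_T$ has total degree zero in each tensor factor. (Each factor corresponds to one component of $\TS$, so each factor separately is degree zero: the $R$-matrix pieces landing on a single component pair up.) Write $Z_T=Z^{top}+Z^{low}$. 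Here $Z^{top}$ collects the PBW terms in which every tensor factor has $J=\Imax$; by degree zero, such a factor is $f_i(K)F^{\Imax}E^{\Imax}$. Every term of $Z^{low}$ has some factor with $|J|<|\Imax|$.

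\textbf{Degree bookkeeping.} By Lemma \ref{lemma: algebraic bounds}, each factor $K^AF^IE^J$ acts on $X$ with coefficients that are polynomials in $t^{-1}$ of degree $\le |J|$, with at least one factor strictly below $(r'-1)(2\rho,\rho)$ for $Z^{low}$. The operator $\iota_X$ contributes exactly $t^{(r'-1)(\rho,\rho)}$ times $\Q(q)$-coefficients per factor (Lemma \ref{lemma: coeffs of iota X}), and $\rcoev_X$ contributes only $\Q(q)$-coefficients. Hence $\FF(Z^{low}g^{\ot N},X)$ has coefficients with $\maxdeg_{t}\le (r'-1)(\rho,\rho)N$ and $\maxdeg_{t^{-1}}\le (r'-1)(2\rho,\rho)N-(r'-1)(\rho,\rho)N-1<(r'-1)(\rho,\rho)N$, since $(2\rho,\rho)=2(\rho,\rho)$. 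So $Z^{low}$ goes into $B$.

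\textbf{The top part.} For $Z^{top}$, Lemma \ref{lemma: change FEmax by cointegral} replaces each $F^{\Imax}E^{\Imax}$ acting on $X$ by $\La$, with an error whose coefficients have $t^{-1}$-degree $<(r'-1)(2\rho,\rho)$. Expanding the tensor product, any term containing at least one error factor also lands in $B$ by the same bookkeeping. The prefactor $f_i(K)$ and the pivot $g$ can be absorbed into $\La$, since $K\La=\e(K)\La$. Applying the coefficient functional: because $\La=\La_GF^{\Imax}E^{\Imax}$ and $\mu(\La)=1$, we get $\ov{\mu}(f_i(K)F^{\Imax}E^{\Imax})=f_i(1)$ and $\ov\mu$ kills lower PBW terms. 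We expect $\mu(\La_G K^A F^IE^J)$ to vanish unless $I=J=\Imax$, using the standard form of the right integral, so the total scalar is $\ov{\mu}^{\ot N}(Z_T)$. The pivot $g^{\ot N}$ is harmless since $\ov\mu(xg)$ and $\ov\mu(x)$ agree on the top part.

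\textbf{Identifying the coefficient.} By Lemma \ref{lemma: CGPT's lemma for cointegral}, each $\La$ acts on $X$ as $\kappa(\la)\lcoev_{\Va}\circ\rev_{\Va}$. Then Lemma \ref{lemma: coev lemma} turns $(\iota_X\ot\lcoev\circ\rev)\circ\rcoev_X$ into $\lcoev_{\Va}^{\ot2}$. The permutation $P_{X,X}$ only permutes identical tensor factors $\lcoev_{\Va}$, so it does not affect the result. This yields $c=\kappa(\la)^N\ov\mu^{\ot N}(Z_T)$.

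\textbf{Main obstacle.} The delicate point is the degree bookkeeping of the $t$-power contributed by $\iota_X$, which must be counted precisely, together with the claim that $\ov\mu$ extracts exactly the top PBW coefficient. Checking the latter against the explicit formula for the right integral of $\uqL$ is where we expect most care to be needed.
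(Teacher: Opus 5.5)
Your proposal is correct and follows the paper's proof essentially step for step: bound $Z^{low}$ with Lemma \ref{lemma: algebraic bounds}, reduce $Z^{top}$ to terms $\bigotimes f_iF^{\Imax}E^{\Imax}$, replace these by $\La$ via Lemma \ref{lemma: change FEmax by cointegral}, cut with Lemmas \ref{lemma: CGPT's lemma for cointegral} and \ref{lemma: coev lemma}, and read off $c$ with $\ov\mu$. One justification is wrong, though: your parenthetical claim that each tensor factor of $Z_T$ separately has degree zero is false, because a crossing between two different components contributes $E^I$ to one factor and $F^I$ to another (this already happens for a clasp). You only need total degree zero: in any term of $Z^{top}$, each factor has degree $d(\Imax)-d(I_i)\geq 0$, and these degrees sum to zero, so each vanishes and forces $I_i=\Imax$, which is exactly the paper's argument.
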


\begin{proof}

During the whole proof, by ``coefficients'' we mean coefficients in the standard basis of $X$ or $X^*$ (the dual basis) and tensor products of these. 
\medskip

\textbf{Step 1: Apply algebraic bounds to $Z'_T$.} We begin by applying (\ref{eq: RT of T in terms of universal}) to the LHS of (\ref{eq prop: FTX in terms of coev + B}):
\begin{align*}
     \FF(\TS,X)=\JJ\circ F(\TS,X), \ \ F(\TS,X)=P\circ L_{(Z'_T)^{ev}}\circ  \rcoev_X^{\ot N}
\end{align*}
Applying the algebraic bound of Lemma \ref{lemma: algebraic bounds} $N$ times we see that the coefficients of $L_{(Z'_T)^{ev}}\circ  \rcoev_X^{\ot N}$, and hence of $F(\TS,X)$, are polynomials in $t^{-1}$ with $\maxdeg_{t^{-1}}\leq (r'-1)(2\rho,\rho) N$. Note that since there are exactly $N$ repetitions of $\iota_X$ in $\JJ$, the operator $\JJ$ has coefficients in $t^{(r'-1)(\rho,\rho) N}\Q(q)$ (this has the effect of ``symmetrizing'' the powers of $t$ in $F(\TS,X)$). This proves that the coefficients of $\FF(\TS,X)$ are Laurent polynomials in $t^{\pm 1}$ with $\maxdeg_{t^{\pm 1}}\leq (r'-1)(\rho,\rho) N$ (which immediately implies the main result of \cite{LNV:genus-unrolled} by Prop. \ref{prop: Seifert formula}). Our purpose then is to pin down the coefficient of $t^{-(r'-1)(2\rho,\rho) N}$ in $F(\TS,X)$. The idea will be to expand $Z'_T$ in the (tensor product) PBW basis of $\uqL^{\ot N}$ and see which terms contribute (and how) to the power $t^{-(r'-1)(2\rho,\rho) N}$ in $F(\TS,X)$.
\medskip

\def\uq{\mathfrak{u}_q}

\textbf{Step 2: Expand $Z'_T$ in PBW and discard low $\uq^+$-powers.} So let's expand $Z'_T$ in the PBW basis.  One can write $$Z'_T=Z^{top} + Z^{low}$$
where $Z^{low}$ is the sum of all terms in the PBW-expansion of $Z'_T$ having some tensor factor of the form $K^AF^IE^J$ with $J<I_{max}$ and $Z^{top}$ is an element of the form $$Z^{top}=\bigotimes_{i=1}^N h_i(K,F)E^{I_{max}}$$
where $h_i(K,F)$ is a polynomial in $K_j^{\pm 1}$'s and $F_j$'s with $\Q(q)$-coefficients. Note that by Lemma \ref{lemma: algebraic bounds}, $Z^{low}$ does not contribute to the terms of highest degree in $t^{-1}$. More precisely, we have $$F(\TS,X)=F(Z'_T,X)=F(Z^{top},X)+F(Z^{low},X)$$
and all coefficients of $F(Z^{low},X)\in \XX$ are polynomials in $t^{-1}$  with $\maxdeg_{t^{-1}}<(r'-1)(2\rho,\rho)\cdot N$.
\medskip

\textbf{Step 3: Top $\uq^+$-power is cointegral.} It is thus enough to prove
\begin{align}
\label{eq proof prop: FTX in terms of coev + B'}
     \FF(Z^{top},X)=(\kappa(\la)^N\cdot\ov{\mu}^{\ot N}(Z_T))\cdot \lcoev_{\Va}^{\ot 2N}+B'
\end{align}
where $B'$ satisfies the same conditions as $B$. For this, we must simply understand the action of $Z^{top}$ on $X^{\ot N}$. The crucial point is to note that, since $Z'_T$ has degree zero (because the $R$-matrix does) and $F^I=0$ if $d(I)>d(I_{max})$, every term $F^IE^{I_{max}}$ appearing in $Z^{top}$ must satisfy $I=I_{max}$. Hence, $Z^{top}$ has the form $$Z^{top}=\bigotimes_{i=1}^N f_i(K) F^{I_{max}}E^{I_{max}}$$
where each $f_i(K)$ is in the Cartan subalgebra of $\uqL$. Now, by Lemmas \ref{lemma: change FEmax by cointegral} and \ref{lemma: CGPT's lemma for cointegral}, for each $i=1,\dots, N$ we have
\begin{align*}
    f_iF^{I_{max}}E^{I_{max}}|_{X}&=f_i\La|_{X} + B_i\\
    &=\e(f_i)\La|_{X}+B_i\\
    &=\e(f_i)\kappa(\la) \cdot\lcoev_{\Va}\circ\rev_{\Va}+B_i
\end{align*}
where $B_i$ is an operator with coefficients that are polynomials in $t^{-1}$ satisfying $\maxdeg_{t^{-1}}<(r'-1)(2\rho,\rho)$. Note that we used the cointegral property in the second equality. 

\medskip
\textbf{Step 4: The cointegral ``cuts''.} Writing $Z^{top}= Z^{top}_1\ot\dots\ot Z^{top}_N$ it follows that
\begin{figure}[H]
    \centering
    \includegraphics[width=14cm]{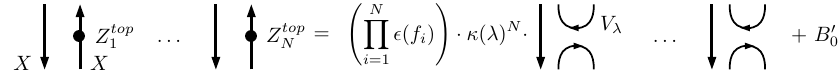}
\end{figure}

\noindent where $B'_0$ is an operator with coefficients polynomials in $t^{-1}$ with $\maxdeg_{t^{-1}}<(r'-1)(2\rho,\rho)\cdot N$ (and $\maxdeg_{t}\leq 0$). Letting $c(q,q^{\la})=\prod_{i=1}^N\e(f_i)\cdot \kappa(\la)^N$ and $$B'=P\circ (\iota_X\ot \id_X)^{\ot N}\circ B'_0\circ \rcoev_X^{\ot N}$$ we get
\begin{figure}[H]
    \centering
    \includegraphics[width=16cm]{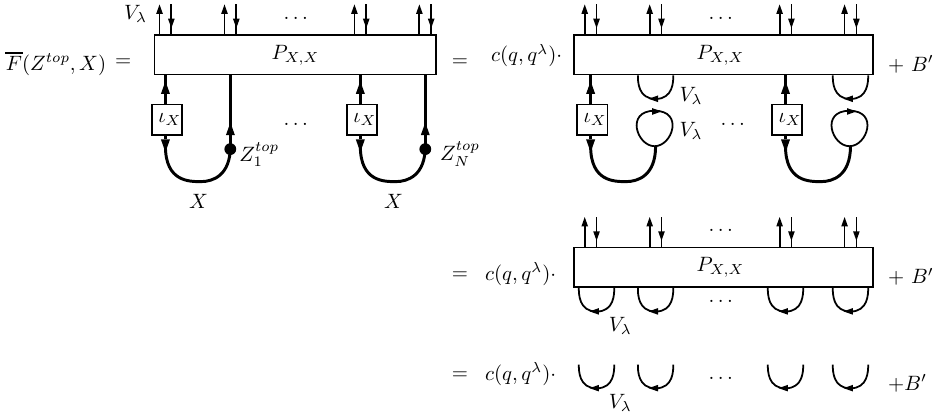}
\end{figure}

Here we used Lemma \ref{lemma: coev lemma} in the third equality while the fourth is obvious ($P_{X,X}$ is just a permutation isomorphism). Since $P$ and $\rcoev_X^{\ot N}$ have $\Q$-coefficients and $(\iota_X\ot \id_X)^{\ot N}$ has coefficients in $t^{(r'-1)(\rho,\rho) N}\Q(q)$, from what we said about $B'_0$ it follows that $B'$
has coefficients with $\maxdeg_{t^{-1}}<(r'-1)(\rho,\rho) N, \maxdeg_{t}\leq(r'-1)(\rho,\rho) N$. Summing up, we get
\begin{figure}[H]
    \includegraphics[width=13cm]{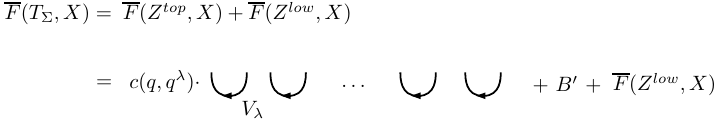}
\end{figure}
\begin{comment}
    
But then
\begin{align*}
    \FF(\TS,X)&=\FF(Z^{top},X)+\JJ\circ B_0\\
    &= c(q,q^{\la})\cdot \lcoev_{\Va}^{\ot 2N}+B
\end{align*}

\end{comment}
\noindent and $B:=B'+\FF(Z^{low},X)$ has coefficients with $\maxdeg_{t^{-1}}<(r'-1)(\rho,\rho) N, \maxdeg_{t}\leq(r'-1)(\rho,\rho) N$ since both $B'$ and $\FF(Z^{low},X)$ have this property. This proves (\ref{eq prop: FTX in terms of coev + B}).

\medskip

\textbf{Step 5: Pinning down $c(q,q^{\la})$.} We now prove that $c(q,q^{\la})=\kappa(\la)^N\ov{\mu}^{\ot N}(Z_T)$ or equivalently $\prod_{i=1}^N\e(f_i)=\ov{\mu}^{\ot N}(Z_T)$. To do this, note that $\ov{\mu}(K^AF^IE^J)=0$ unless $I=J=\Im$ (where it is =1), so that $$\ov{\mu}^{\ot N}(Z'_T)=\ov{\mu}^{\ot N}(Z^{top})=\prod_{i=1}^N\ov{\mu}(f_iF^{\Im}E^{\Im}).$$
But by definition of $\ov{\mu}$, one has $$\ov{\mu}(f_iF^{\Im}E^{\Im})=\mu(\La_Gf_iF^{\Im}E^{\Im})=\mu(f_i\La)=\mu(\e(f_i)\La))=\e(f_i)$$since $\mu(\La)=1$. Finally, note that $\ov{\mu}^{\ot N}(Z'_T)=\mu^{\ot N}(Z_T)$ since $\mu(xg)=\mu(gx)$ so that $\ov{\mu}(xg)=\ov{\mu}(x)$ for any group-like $g$ and $x\in \uqL$.
\medskip

\end{proof}

\begin{proof}[Proof of Theorem \ref{theorem: top coefficient formula with integral}]
Closing up successive strands in the above proposition (and using Prop. \ref{prop: Seifert formula}), we find 
%$$\ADOgr(L)\cdot\id_{\Va}=c(q,q^{\la})\cdot\id_{\Va}+(\id_{\Va}\ot \lev^{\ot 2N})(B\ot \id_{\Va})$$
 \begin{figure}[H]
        \centering
        \includegraphics[width=12cm]{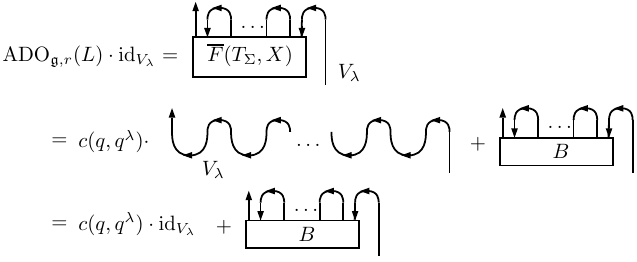}
      
    \end{figure}
But since $B$ has coefficients with $\maxdeg_{t^{-1}}<(r'-1)(\rho,\rho) N$ then it follows that
\begin{align*}
    \leastgr \ADOgr(\p\S)&=\leastgr c(q,q^{\la})=(\leastgr \kappa(\la)^N)\ov{\mu}^{\ot N}(Z_T).
\end{align*}
Since $\ADOgr$ and $\kappa(\la)$ satisfy the same symmetry property, it follows that \begin{align}
\label{eqtn: proof of Thm 3, with kappa lambda}
    \topgr \ADOgr(\p\S)&=\topgr c(q,q^{\la})=(\topgr \kappa(\la)^N)\ov{\mu}^{\ot N}(Z_T).
\end{align}
We can solve for $\topgr \kappa(\la)$ by plugging in $L=H_-$ (so that $Z_T=v$ and $N=1$ since $H_-$ bounds an annulus with a right-handed twist) and using $\topgr(H_-)=q^{-(2\rho,2\rho)}$ as computed in Subsection \ref{subs: Hopf links computation}. This gives $$q^{-(2\rho,2\rho)}=\topgr(H_-)=\ov{\mu}(v)\cdot\topgr\kappa(\la)$$
so that $$\topgr\kappa(\la)=\frac{q^{-(2\rho,2\rho)}}{\ov{\mu}(v)}.$$
Plugging the latter into (\ref{eqtn: proof of Thm 3, with kappa lambda}) establishes Theorem \ref{theorem: top coefficient formula with integral}.
\end{proof}

\begin{remark}
\label{remark: scalar ovmu on ribbon v}
If the cointegral of $\uqg$ used is $\La=\La_GF^{\Imax}E^{\Imax}$ and $\mu$ is such that $\mu(\La)=1$ then the scalar $\ov{\mu}(v)$ can be explicitly computed as $$\ov{\mu}(v)=\left(\frac{q^{1-r'}}{r'}(q-q^{-1})^{2(r'-1)}\right)^{|\De_+|}\cdot q^{-2(2\rho,2\rho)}.$$
    Indeed, by the formula of the $R$-matrix the ribbon element can be computed to be $$v=\sum_Iq^{-(d(I),d(I))}c_IF^IK_{2\rho}^{1-r'}E^I.$$
    Noting that after applying $\ov{\mu}$ only the term with $I=\Imax$ survives, using $d(\Imax)=(r'-1)2\rho$ and commuting $F^{\Imax}K_{2\rho}^{1-r'}=q^{-(r'-1)^2(2\rho,2\rho)}K_{2\rho}^{1-r'}F^{\Imax}$ and using $\La_GK_{2\rho}^{1-r'}=\La_G$ gives $$\ov{\mu}(v)=q^{-2(r'-1)^2(2\rho,2\rho)}c_{\Imax}\mu(\La_GF^{\Imax}E^{\Imax})=q^{-2(2\rho,2\rho)}c_{\Imax}.$$
    Plugging in (\ref{eq: formula for cImax}) we get the desired formula.
\end{remark}

Let $X'=\Vla^*\ot \Vla$. By turning Proposition \ref{prop: prop surface and coevaluations} upside-down we also obtain: 

\begin{proposition}
\label{prop: F(TS,X) as coev but upside-down}
    Let $T_{\S'}$ be a $N'$-component (framed, oriented) top tangle. Then the operator invariant $\FF(T_{\S'},X'):(X')^{\ot 2N'}\to \C$ satisfies $$\FF(T_{\S'},X')=c'(q,q^{\la})\cdot\lev_{\Va}^{\ot 2N'}+B'$$
    or graphically
    \begin{figure}[H]
        \centering
        \includegraphics[width=13cm]{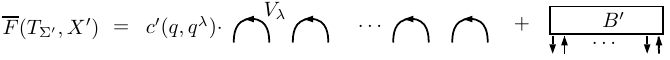}
      
    \end{figure}
   \noindent for some $c'(q,q^{\la})\in \Q(q,q^{\pm\la})$ and where $B'$ evaluates any standard basis vector of $(X')^{\ot 2N'}$ to a Laurent polynomial in $t$ with $\maxdeg_{t}<(r'-1)(\rho,\rho)\cdot N'$ and $\maxdeg_{t^{-1}}\leq (r'-1)(\rho,\rho)\cdot N'$.

\end{proposition}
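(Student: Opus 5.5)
The plan is to deduce this from Proposition \ref{prop: prop surface and coevaluations} by a symmetry of the ribbon category, rather than redoing the PBW argument. A top tangle $T_{\S'}$ read upside-down (rotating the diagram by $\pi$ in the plane of projection) becomes a bottom tangle $T^{rot}$. The Reshetikhin--Turaev functor intertwines this rotation with taking duals/transposes up to the pivotal structure. Concretely, $F(T_{\S'},X')$ is the transpose of $F(T^{rot},X'^{*})$ composed with pivotal isomorphisms. Alternatively, one can mirror through a horizontal plane while reversing orientations, which corresponds to the contravariant duality functor $V\mapsto V^*$, $f\mapsto f^*$. I would use the duality/rotation version, since it preserves crossing types. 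Here $X'^{*}=(\Va^*\ot\Va)^*\cong\Va^*\ot\Va^{**}\cong\Va^*\ot\Va$. So, after the pivotal identification, the coloring is $V_{\mu}\ot V_{\mu}^*$ with $V_\mu:=\Va^*\cong V_{-\la+(r'-1)2\rho}$. This is again a typical diagonal-shifted module in $\CC_{[0]}$-setting.

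Step 1: write $\FF(T_{\S'},X')=\bigl(\FF(T^{rot},V_\mu\ot V_\mu^*)\bigr)^{t}$ up to identifications by $j$'s and $\iota$'s, and check that under these identifications $\lcoev_{V_\mu}^{\ot 2N'}$ transposes to $\lev_{\Va}^{\ot 2N'}$. This is the standard snake identity, together with Lemma \ref{lemma: coev lemma}-type bookkeeping of pivots. Step 2: apply Proposition \ref{prop: prop surface and coevaluations} to $T^{rot}$ colored by $V_\mu$. This gives $\FF=c\cdot\lcoev^{\ot 2N'}+B$, where the coefficients of $B$ have $\maxdeg$ strictly less than $(r'-1)(\rho,\rho)N'$ in $t_\mu^{-1}$ and at most that in $t_\mu$, with $t_\mu=q^{2\mu}$. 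Step 3: translate the variable. Since $\mu=-\la+(r'-1)2\rho$ with $(2\rho,\a_i)=2$, we have $t_\mu^{1/2}=q^{-\la-2}\cdot(\text{power of }q)$, so $t_\mu^{-1}=q^{c}t$. Hence the strict bound in $t_\mu^{-1}$ becomes a strict bound in $t$, and the weak bound becomes one in $t^{-1}$, exactly as claimed. Also $c'(q,q^\la)\in\Q(q,q^{\pm\la})$.

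The main obstacle is Step 1. One must check that the standard bases of $V_\mu\cong\Va^*$ and of $\Va^*$ correspond up to diagonal scalars in $\Q(q)$, possibly times a uniform power of $t$. Otherwise the degree bounds on $B$ are not transported. Since $V_\mu$ and $\Va^*$ are isomorphic via an isomorphism sending weight vectors to weight vectors, the issue is only the scalars. I would try first to avoid the isomorphism altogether. Lemma \ref{lemma: algebraic bounds} and Lemmas \ref{lemma: CGPT's lemma for cointegral}--\ref{lemma: change FEmax by cointegral} can be rerun for $X'=\Va^*\ot\Va$ with the dual basis, using the automorphism $\o$: it exchanges $E$ and $F$, so the roles of $t$ and $t^{-1}$ swap, and the proof of Proposition \ref{prop: prop surface and coevaluations} goes through verbatim with ``top'' and ``bottom'' reversed. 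The pivotal scalars from $\iota_{X'}$ are uniform powers $t^{\pm(r'-1)(\rho,\rho)}$ by Lemma \ref{lemma: coeffs of iota X}, so they only shift degrees symmetrically.
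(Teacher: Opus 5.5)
Your strategy is the one the paper intends: its only justification is ``turning Proposition \ref{prop: prop surface and coevaluations} upside-down''. Your Steps 2--3 are also right. With $\lambda'=-\lambda+(r'-1)2\rho$, which is diagonal and typical, $q^{2\lambda'(H_i)}=q^{-4}t^{-1}$, so the strict bound in the inverse variable becomes the strict bound in $t$.

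The gap is in the step you yourself call the crux. Your claim that an isomorphism $\phi\colon V_{\lambda'}\to V_\lambda^*$ matches standard bases up to diagonal $\mathbb{Q}(q)$-scalars times a uniform power of $t$ is false. In rank $\geq 2$ the weight spaces of $V_\lambda$ are not one-dimensional, so preserving weights only makes $\phi$ block-diagonal. Moreover, the powers of $t$ that actually occur depend on the weight. Your fallback has the same defect. In the standard basis of $X'=V_\lambda^*\otimes V_\lambda$, $E_i$ acts with coefficients $a\,t^{1/2}+b\,t^{-1/2}$ and $F_i$ with coefficients in $t^{1/2}\mathbb{Q}(q)$. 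This is not the $t\leftrightarrow t^{-1}$ mirror of the algebraic bounds on $X$, so the proof of Proposition \ref{prop: prop surface and coevaluations} does not go through verbatim.

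What closes the gap is the following.
\begin{itemize}
\item Normalize $\phi(v_0)=v'_{I_{max}}$, so that $\phi(F^Iv_0)=F^I\cdot v'_{I_{max}}$.
\item On the standard basis of $V_{\lambda'}$ the $F_i$ act by $\lambda'$-independent $\mathbb{Q}(q)$-matrices. On the dual basis of $V_\lambda^*$ they act, via $S(F_i)=-K_iF_i$, by $t^{1/2}$ times $\lambda$-independent $\mathbb{Q}(q)$-matrices.
\item Hence $\phi$ has matrix $\mathrm{diag}(t^{|I|/2})\cdot C$ with $C\in GL(\mathbb{Q}(q))$ independent of $t$. So $(\phi\otimes(\phi^{-1})^*)^{\otimes 2N'}$ multiplies $\bigotimes_k v_{I_k}\otimes v'_{J_k}$ by $t^{\sum_k(|I_k|-|J_k|)/2}$ and then applies a $t$-independent $\mathbb{Q}(q)$-matrix.
\item On weight-zero vectors $\sum_k d(I_k)=\sum_k d(J_k)$, so this factor is $1$.
\item $B(1)$ has weight zero, being the difference of the two invariant vectors $\overline{F}(T,X_{\lambda'})(1)$ and $\overleftarrow{\mathrm{coev}}^{\otimes 2N'}(1)$, where $X_{\lambda'}=V_{\lambda'}\otimes V_{\lambda'}^*$.
\end{itemize}
Therefore the transported remainder has coefficients that are $\mathbb{Q}(q)$-linear combinations of the old ones, and both bounds survive. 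The same weight-zero rescaling is what makes a direct rerun on $X'$ work.

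Step 1 also needs no pivot bookkeeping, which matters: a stray $j$ or $\iota_{X'}$ would shift all degrees by a uniform power of $t$ and break the absolute bounds. Let $D$ be the cabled top tangle and $D^\dagger$ its rotation by $\pi$ with all orientations reversed. The contravariant duality functor gives $F(D,V_\lambda)^*=F(D^\dagger,V_\lambda^*)$ on the nose, with $(\overleftarrow{\mathrm{ev}}_{V_\lambda})^*=\overleftarrow{\mathrm{coev}}_{V_\lambda^*}$. The coefficients of $F(D)^*(1)$ are then literally the values of $F(D)$ on standard basis vectors.
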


The following lemma concerns the tensors $B$ and $B'$ of Propositions \ref{prop: prop surface and coevaluations} and \ref{prop: F(TS,X) as coev but upside-down}.

\begin{lemma}
\label{lemma: B has smaller deg after closing}
  The operators $$\cl(B):=(\id_{\Va}\ot\lev_{\Va}^{\ot 2N})\circ (B\ot \id_{\Va}):\Va\to \Va$$
  and
  $$\cl(B'):= (\id_{\Va}\ot B' )\circ (\lcoev_{\Va}^{\ot 2N'}\ot \id_{\Va}):\Va\to \Va$$
  have coefficients which are Laurent polynomials in $t$ satisfying $\maxdeg_{t^{\pm 1}}<(r'-1)(\rho,\rho) \cdot N$ and $\maxdeg_{t^{\pm 1}}< (r'-1)(\rho,\rho)\cdot N'$ respectively.
\end{lemma}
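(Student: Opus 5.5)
The plan is to compute the matrix coefficients of $\cl(B)$ directly in the standard basis and read off the $t$-degrees. I will do the case of $\cl(B)$ in detail; $\cl(B')$ is the mirror image (turn the pictures upside down and exchange the roles of $t$ and $t^{-1}$).

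\textbf{Expand $B(1)$.} Write
$$B(1)=\sum b_{I_1,J_1,\dots,I_{2N},J_{2N}}\, (v_{I_1}\ot v'_{J_1})\ot\cdots\ot(v_{I_{2N}}\ot v'_{J_{2N}}),$$
where, by Proposition \ref{prop: prop surface and coevaluations}, every coefficient $b$ is a Laurent polynomial with $\maxdeg_{t^{-1}}<(r'-1)(\rho,\rho)N$ and $\maxdeg_t\leq (r'-1)(\rho,\rho)N$.

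\textbf{Apply the closure.} Now apply $\id_{\Va}\ot \lev_{\Va}^{\ot 2N}$ to $B(1)\ot v_K$. The left evaluation is the vector-space pairing, so its matrix coefficients in the standard bases are $0$ or $1$; it introduces no pivot and no power of $t$. The nested evaluations pair $v'_{J_{2N}}$ with $v_K$, then $v'_{J_{2N-1}}$ with $v_{I_{2N}}$, and so on. Consequently each coefficient of $\cl(B)(v_K)$ in the basis $v_{I_1}$ is a sum of certain $b$'s, each taken with coefficient $1$. It follows that every coefficient of $\cl(B)$ satisfies the same two bounds as the coefficients of $B(1)$.

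\textbf{Upgrade the non-strict bound.} The remaining issue is the $t$-side bound, which is only non-strict, while the lemma asks for strictness in both directions. Here I would use that $\cl(B)$ is a morphism: $\FF(\TS,X)$ and $\lcoev^{\ot 2N}_{\Va}$ are both $\uqL$-equivariant (the latter after closing), so $\cl(B)=\ADOgr(L)\id_{\Va}-c\cdot\id_{\Va}$ is a scalar multiple of $\id_{\Va}$. Call this scalar $\beta$; it is a Laurent polynomial whose $t^{-1}$-degree is $<(r'-1)(\rho,\rho)N$. The symmetry $\la\mapsto -\la+(r'-1)2\rho$ of Proposition \ref{prop: symmetry of ADO} and Lemma \ref{lemma: symmetry of kappa} (equivalently $t^{1/2}\mapsto t^{-1/2}q^{-2}$) is satisfied by both $\ADOgr(L)$ and $c=\kappa(\la)^N\ov\mu^{\ot N}(Z_T)$. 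Hence $\beta$ is symmetric too, and its bound on the $t^{-1}$-side transfers to the $t$-side, giving $\maxdeg_{t^{\pm1}}\beta<(r'-1)(\rho,\rho)N$.

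\textbf{The case of $\cl(B')$ and the main obstacle.} The argument for $\cl(B')$ is the same, using the strict $t$-bound of Proposition \ref{prop: F(TS,X) as coev but upside-down} together with symmetry. The main obstacle is the strictness on the $t$-side, since the direct coefficient bookkeeping only yields $\leq$. If the symmetry argument turns out to be awkward (for instance, because $\cl(B')$ is not obviously of the form invariant minus $c'$), I would fall back on tracking weights. Coefficients reaching the extreme $t$-degree come only from the $\iota_X$ factors acting on $F$-extremal vectors, and the evaluation pairing in the closure forces $J=I$ matchings that contribute exactly the $\lcoev$ term already split off. This part needs care.
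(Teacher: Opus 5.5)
Your proposal is correct and follows the paper's argument. You close up to get $\cl(B)=(\ADOgr(L)-c)\cdot\id_{\Va}$ with $c=\kappa(\la)^N\ov{\mu}^{\ot N}(Z_T)$, inherit the strict $t^{-1}$-bound from $B$ (the left evaluations have only $0/1$ coefficients), and transfer it to the $t$-side via the symmetry $\la\mapsto-\la+(r'-1)2\rho$ shared by $\ADOgr$ and $\kappa(\la)$, with $\cl(B')$ handled the same way and the weight-tracking fallback unnecessary.
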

\begin{proof}
    %This follows from the symmetry of $\kappa(\la)$ (Lemma \ref{lemma: symmetry of kappa}) and what we just proved as follows. Applying $(\id_{\Va}\ot\lev_{\Va}^{\ot 2N})$ on both sides of (\ref{eq prop: FTX in terms of coev + B}) we obtain $$\ADOgr(L)\id_{\Va}=(\kappa(\la)^N\cdot \ov{\mu}^{\ot N}(Z_T))\id_{\Va}+(\id_{\Va}\ot\lev_{\Va}^{\ot 2N})\circ B.$$
As in the previous proof, after closing strands we get
$$\ADOgr(L)\cdot\id_{\Va}-(\kappa(\la)^N\ov{\mu}^{\ot N}(Z_T))\cdot\id_{\Va}=(\id_{\Va}\ot \lev^{\ot 2N})\circ(B\ot \id_{\Va}).$$
As before, the least term of $\ADOgr(L)$ coincides with that of $(\kappa(\la)^N\ov{\mu}^{\ot N}(Z_T)$. But since $\ADOgr$ and $\kappa(\la)$ satisfy the same symmetry (and $\ov{\mu}^{\ot N}(Z_T)\in\Q(q)$) by Lemma \ref{lemma: symmetry of kappa} and Proposition \ref{prop: symmetry of ADO}, it follows that they also have the same top term. This implies that the RHS cannot have a non-trivial coefficient in degree $(r'-1)(\rho,\rho)\cdot N$, as desired. The same argument applies to $\cl(B')$.
\end{proof}

\begin{remark}
    The above lemma would not have been necessary if the tensor $B$ had coefficients satisfying $\maxdeg_{t^{-1}}<(r'-1)(\rho,\rho) \cdot N$ and $\maxdeg_{t}< (r'-1)(\rho,\rho)\cdot N$, but we could only prove $\leq$ in the last inequality.%, as in Proposition \ref{eq prop: FTX in terms of coev + B}.
\end{remark}

\subsection{Proof of Theorem \ref{theorem: Murasugi sum multiplicativity of ADO}}
    Suppose $\S_1,\S_2$ are Murasugi-summed along a $2n$-gon to get the surface $\S$ and let $L=\p\S$. This $2n$-gon is topologically a disk $D$ which we can suppose lies in $\R^2\t \{0\}$ and, after sliding handles if necessary, we can suppose that both $\S_1$ and $\S_2$ are obtained by attaching 1-handles to this disk. Suppose $\S_1$ lies below $D$ and $\S_2$ lies above, let $T_{\S_1}, T_{\S_2}$ be the corresponding tangles and let $N,N'$ be their number of components respectively. See Figure \ref{figure: isotoping bands on Murasugi sum}.
\medskip

\begin{figure}
    \centering
    \includegraphics[width=11cm]{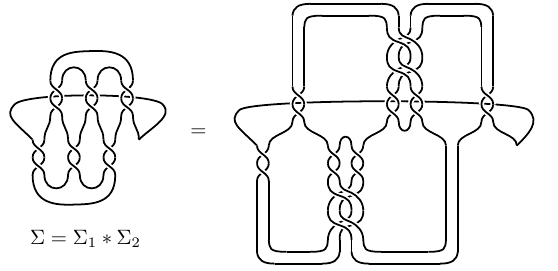}
    \caption{On the left there is an hexagonal Murasugi-sum of two pretzel surfaces of type $(-2,-2,-2)$. Isotoping the two rightmost bands, we get the surface on the right.}
    \label{figure: isotoping bands on Murasugi sum}
\end{figure}

    Then, to compute $\ADOgr(L)$ we can open $L$ at a point of $\p D$ away from the 1-handles so that $F(L_o):\Va\to\Va$ can be written as
    \begin{align*}
        F(L_o,\Va)=(\id_{\Va}\ot \FF(T_{\S_2}, X'))\circ I\circ (\FF(T_{\S_1},X)\ot \id_{\Va})
    \end{align*}
    where $X'=\Va^*\ot\Va$ and the operator $I:X^{\ot 2N}\ot\Va\to\Va\ot (X')^{2N'}$ is simply a tensor product of left evaluations and left coevaluations (and identity maps) as specified by the endpoints of $T_{\S_1}$ and $T_{\S_2}$ (see Figure \ref{figure: Murasugi sum in terms of bottom and top tangles}). 

\begin{figure}
    \centering
    \includegraphics[width=11cm]{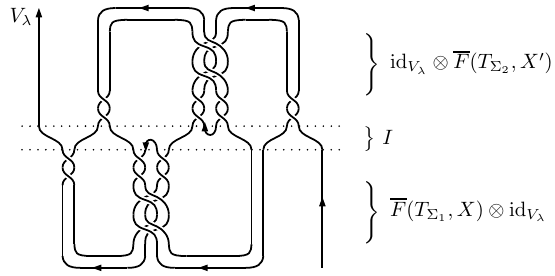}
    \caption{In this picture, we cut-open the link $L$ on the Murasugi-sum disk of the rightmost surface of Figure \ref{figure: isotoping bands on Murasugi sum}. The result is an expression of the operator invariant $F(L_o,\Va)$ in terms of the operator invariants $\FF(T_{\S_1},X)$ and $\FF(T_{\S_2},X')$ to which we can apply Propositions \ref{prop: prop surface and coevaluations} and \ref{prop: F(TS,X) as coev but upside-down}. In between the dotted lines we indicate the operator $I$, which is a tensor product of identity maps and left evaluations/coevaluations.}
    \label{figure: Murasugi sum in terms of bottom and top tangles}
\end{figure}

 \medskip

    Clearly the operator $I$ has only $\Q$-coefficients in the standard basis (it is a tensor product of identity maps and left (co)evaluations). Note also that $$(\id_{\Va}\ot \lev_{\Va}^{\ot 2N'})\circ I=\id_{\Va}\ot \lev_{\Va}^{\ot 2N}, \ I\circ (\lcoev_{\Va}^{\ot 2N}\ot \id_{\Va})=\lcoev_{\Va}^{\ot 2N'}\ot\id_{\Va}$$
    and $$(\id_{\Va}\ot \lev_{\Va}^{2N'})I(\lcoev^{\ot 2N}\ot\id_{\Va})=\id_{\Va}$$
    as it is clear from Figure \ref{figure: Murasugi sum in terms of bottom and top tangles}. Applying Propositions \ref{prop: prop surface and coevaluations} and \ref{prop: F(TS,X) as coev but upside-down} to $\FF(T_{\S_1},X)$ and $\FF(T_{\S_2},X')$, we get:
\begin{align*}
    F(L_o,\Va)&=(\id_{\Va}\ot c'\cdot\lev_{\Va}^{\ot 2N'}+\id_{\Va}\ot B')\circ I\circ (c\cdot\lcoev_{\Va}^{\ot 2N}\ot\id_{\Va}+B\ot\id_{\Va})\\
    &=cc'\cdot (\id_{\Va}\ot \lev_{\Va}^{\ot 2N'})I(\lcoev^{\ot 2N}\ot\id_{\Va})+ c'(\id_{\Va}\ot \lev_{\Va}^{\ot 2N'})I(B\ot\id_{\Va})\\
    &+c(\id_{\Va}\ot B')I(\lcoev_{\Va}^{\ot 2N}\ot \id_{\Va})+(\id_{\Va}\ot B')I(B\ot \id_{\Va}) & \\
    &=cc'\cdot \id_{\Va}+c'(\id_{\Va}\ot \lev_{\Va}^{\ot 2N})(B\ot\id_{\Va})+c(\id_{\Va}\ot B')(\lcoev_{\Va}^{\ot 2N'}\ot \id_{\Va}) & \\
    &+(\id_{\Va}\ot B')I(B\ot \id_{\Va}) & \\
    &=cc'\cdot \id_{\Va}+c'\cdot\cl(B)+c\cdot \cl(B')+(\id_{\Va}\ot B')I(B\ot \id_{\Va}) &
\end{align*}
By the bounds on the coefficients of $\cl(B),\cl(B')$ of Lemma \ref{lemma: B has smaller deg after closing} and the bounds on $B,B'$ of Proposition \ref{prop: prop surface and coevaluations}, it follows that the top term of $F(L_o,\Va)$ is given by $cc'\cdot \id_{\Va}$. But the top term of this is precisely the product of the top terms of the ADO invariants of $L_1$ and $L_2$, therefore proving the theorem.

\subsection{Sharpness of the ADO genus bound}\label{subs: sharpness of ADO bound}  It is an interesting question to what extent the genus bound for $\ADOgr$ of \cite{LNV:genus-unrolled} is sharp. By Corollary \ref{corollary: fibred}, it is sharp on fibred links. Theorem \ref{theorem: Murasugi sum multiplicativity of ADO} has the following immediate corollary:

\begin{Corollary}
If $L$ is a Murasugi-sum of links $L_1,L_2$ along connected minimal genus Seifert surfaces, then the genus bound for $\ADOgr(L)$ of \cite{LNV:genus-unrolled} is sharp iff it is sharp for both $\ADOgr(L_1)$ and  $\ADOgr(L_2)$.
\end{Corollary}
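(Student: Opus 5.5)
The corollary follows almost at once from Theorem \ref{theorem: Murasugi sum multiplicativity of ADO}. The only work is to say precisely what ``sharp'' means and to check that the ring of coefficients has no zero divisors.

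First I fix the meaning of sharpness. The genus bound of \cite{LNV:genus-unrolled} says that $\deg_{\tt}\ADOgr(L)\leq (r'-1)2\rho\cdot(2g(L)+s-1)$. By the symmetry of Proposition \ref{prop: symmetry of ADO}, $\ADOgr(L)$ can only attain this degree through its extreme monomials $\tt^{\pm(r'-1)\rho\cdot(2g(L)+s-1)}$, up to a power of $q$. Hence the bound is sharp exactly when the coefficient of $\tt^{(r'-1)\rho\cdot(2g(L)+s-1)}$ is non-zero, that is, when $\topgr(L)\neq 0$. The relation $\leastgr(L)=\topgr(L)\cdot q^{(2g(L)+s-1)(2\rho,2\rho)}$ shows that the top and least coefficients vanish together, so nothing is lost by looking only at the top term.

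Next I apply Theorem \ref{theorem: Murasugi sum multiplicativity of ADO}. Since $L$ is a Murasugi sum of $L_1$ and $L_2$ along connected minimal genus Seifert surfaces, the theorem gives $\topgr(L)=\topgr(L_1)\cdot\topgr(L_2)$. All three factors lie in $\Q(q)$. Here $q$ is a specific complex root of unity, so $\Q(q)$ is a subfield of $\C$ and in particular has no zero divisors. Therefore $\topgr(L)\neq 0$ if and only if $\topgr(L_1)\neq0$ and $\topgr(L_2)\neq 0$. This is exactly the claimed equivalence.

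No real obstacle is expected. The one point that needs care is the identification of sharpness with non-vanishing of $\topgr$. The definition of $\topgr$ uses the minimal genus $g$ of each link. By Gabai's theorem, the Murasugi sum $\S$ is a minimal genus Seifert surface for $L$, so $g(L)$ is computed by $\S$ itself. Consequently the exponents appearing in $\topgr(L)$ are the genuine extremal degrees allowed by the bound, and $\topgr(L)$ really detects sharpness for $L$.
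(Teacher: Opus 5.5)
Your proposal is correct and matches the paper, which treats this as an immediate consequence of Theorem \ref{theorem: Murasugi sum multiplicativity of ADO}. Sharpness of the bound means exactly that $\topgr\neq 0$, and a product in the field $\Q(q)$ vanishes if and only if one of its factors does. Your remark that Gabai's theorem makes the summed surface realize $g(L)$ is the implicit reason the exponent defining $\topgr(L)$ is the correct one.
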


Thus, the question of sharpness of the genus bound can be studied by decomposing links as Murasugi-sums along minimal genus Seifert surfaces (whenever possible). On the other hand, Theorem \ref{theorem: top coefficient formula with integral} implies that sharpness of the genus bound in \cite{LNV:genus-unrolled} is related to non-vanishing of integrals of universal invariants of bottom tangles. In particular, universal $\uqg$-invariants of knots are relevant to this question as follows: suppose $\S$ is an annulus around a (framed) knot $K$ and let $L_K=\p\S$ (with boundary orientation). Then $Z_{\TS}$ is the universal $\uqg$-invariant of the knot $K$ itself: $Z_{\TS}=Z_K \in \uqg$. Thus $\topgr(L_K)=\frac{\ov{\mu}(Z_K)}{(q^{(2\rho,2\rho)}\ov{\mu}(v))^N}$. In particular, $\topgr(L_K)$ is non-zero (i.e. the genus bound of \cite{LNV:genus-unrolled} is sharp) if and only if $\ov{\mu}(Z_K)\neq 0$.

\bibliographystyle{amsplain}
\bibliography{z-referencesabr2}

\end{document}